\theoremstyle{plain}
\newtheorem{theorem}{Theorem}[section]
\newtheorem{lemma}{Lemma}[section]
\theoremstyle{definition}
\newtheorem{example}{Example}[section]
\newtheorem{assumption}{Assumption}[section]
\theoremstyle{remark}
\newtheorem{remark}{Remark}[section]
\newcommand{\bbI}{\mathbb{I}}
\newcommand\norm[1]{\lVert #1 \rVert}
\newcommand{\MP}{\mathsf{MP}}
\newcommand{\SC}{\mathsf{SC}}
\renewcommand{\P}{\mathbb{P}}
\newcommand{\bbE}{\mathbb{E}}
\newcommand{\bbC}{\mathbb{C}}
\newcommand{\cL}{\mathcal{L}}
\newcommand{\op}{\mathrm{op}}
\newcommand{\hs}{\mathrm{HS}}
\newcommand{\Leb}{\mathsf{Leb}}
\newcommand{\bone}{\mathbf{1}}
\newcommand{\convas}{\xrightarrow{\text{a.s.}}}
\newcommand{\convd}{\xrightarrow{d}}
\DeclareMathOperator{\diag}{diag}
\DeclareMathOperator{\V}{Var}
\DeclareMathOperator{\Tr}{Tr}
\let\save@mathaccent\mathaccent
\newcommand*\if@single[3]{%
  \setbox0\hbox{${\mathaccent"0362{#1}}^H$}%
  \setbox2\hbox{${\mathaccent"0362{\kern0pt#1}}^H$}%
  \ifdim\ht0=\ht2 #3\else #2\fi
  }
\newcommand*\rel@kern[1]{\kern#1\dimexpr\macc@kerna}
\newcommand*\widebar[1]{\@ifnextchar^{{\wide@bar{#1}{0}}}{\wide@bar{#1}{1}}}
\newcommand*\wide@bar[2]{\if@single{#1}{\wide@bar@{#1}{#2}{1}}{\wide@bar@{#1}{#2}{2}}}
\newcommand*\wide@bar@[3]{%
  \begingroup
  \def\mathaccent##1##2{%
    \let\mathaccent\save@mathaccent
    \if#32 \let\macc@nucleus\first@char \fi
    \setbox\z@\hbox{$\macc@style{\macc@nucleus}_{}$}%
    \setbox\tw@\hbox{$\macc@style{\macc@nucleus}{}_{}$}%
    \dimen@\wd\tw@
    \advance\dimen@-\wd\z@
    \divide\dimen@ 3
    \@tempdima\wd\tw@
    \advance\@tempdima-\scriptspace
    \divide\@tempdima 10
    \advance\dimen@-\@tempdima
    \ifdim\dimen@>\z@ \dimen@0pt\fi
    \rel@kern{0.6}\kern-\dimen@
    \if#31
      \overline{\rel@kern{-0.6}\kern\dimen@\macc@nucleus\rel@kern{0.4}\kern\dimen@}%
      \advance\dimen@0.4\dimexpr\macc@kerna
      \let\final@kern#2%
      \ifdim\dimen@<\z@ \let\final@kern1\fi
      \if\final@kern1 \kern-\dimen@\fi
    \else
      \overline{\rel@kern{-0.6}\kern\dimen@#1}%
    \fi
  }%
  \macc@depth\@ne
  \let\math@bgroup\@empty \let\math@egroup\macc@set@skewchar
  \mathsurround\z@ \frozen@everymath{\mathgroup\macc@group\relax}%
  \macc@set@skewchar\relax
  \let\mathaccentV\macc@nested@a
  \if#31
    \macc@nested@a\relax111{#1}%
  \else
    \def\gobble@till@marker##1\endmarker{}%
    \futurelet\first@char\gobble@till@marker#1\endmarker
    \ifcat\noexpand\first@char A\else
      \def\first@char{}%
    \fi
    \macc@nested@a\relax111{\first@char}%
  \fi
  \endgroup
}
\let\hat\widehat
\let\tilde\widetilde
\let\bar\widebar
\title{Bulk Spectra of Truncated Sample Covariance Matrices}
\author[S. Ghosh]{Subhroshekhar Ghosh}
\address{
    Department of Mathematics \\
    National University of Singapore \\
    10 Lower Kent Ridge Road \\
    Singapore 119076
}
\email{matghos@nus.edu.sg}
\author[S. S. Mukherjee]{Soumendu Sundar Mukherjee}
\address{
    Statistics and Mathematics Unit \\
    Indian Statistical Institute \\
    203 B.T. Road, Kolkata 700108 \\
    West Bengal, India
}
\email{ssmukherjee@isical.ac.in}
\author[H. Talukdar]{Himasish Talukdar}
\address{
    Statistics and Mathematics Unit \\
    Indian Statistical Institute \\
    203 B.T. Road, Kolkata 700108 \\
    West Bengal, India
}
\email{talukdar.himasish@gmail.com}
\begin{document}

\begin{abstract}
Determinantal Point Processes (DPPs), which originate from quantum and statistical physics, are known for modelling diversity. Recent research \citep{ghosh2020gaussian} has demonstrated that certain matrix-valued $U$-statistics (that are truncated versions of the usual sample covariance matrix) can effectively estimate parameters in the context of Gaussian DPPs and enhance dimension reduction techniques, outperforming standard methods like PCA in clustering applications. This paper explores the spectral properties of these matrix-valued $U$-statistics in the \emph{null} setting of an isotropic design. These matrices may be represented as $X L X^\top$, where $X$ is a data matrix and $L$ is the Laplacian matrix of a random geometric graph associated to $X$. The main mathematically interesting twist here is that the matrix $L$ is dependent on $X$. We give complete descriptions of the bulk spectra of these matrix-valued $U$-statistics in terms of the Stieltjes transforms of their empirical spectral measures. The results and the techniques are in fact able to address a broader class of kernelised random matrices, connecting their limiting spectra to generalised Mar\v{c}enko-Pastur laws and free probability.
\end{abstract}

\maketitle

\section{Introduction}\label{sec:intro}
The explosion of large-scale data, often referred to as ``big data'', has transformed industries, research fields, and everyday life in the recent years. The phenomenon of massive scale data has called for new approaches to modelling and analysis. In particular, the question of diverse samples to enable a more parsimonious representation of data has led to connections with statistical physics, wherein models of strongly repulsive particle systems have been leveraged to augment the diverseness of features in machine learning procedures.

A key model in that respect is that of \textit{determinantal point processes} or DPPs. A DPP is a probability distribution over subsets of a given ground set, such that the probability of a subset is proportional to the determinant of a kernel matrix corresponding to the subset. DPPs are known for their ability to model diversity, making them useful for selecting a set of items that are \textit{spread out} over the feature space. Originating in quantum and statistical physics, DPPs have quickly grown to have an increasing impact as a significant component of a machine learning toolbox based on negative dependence.

A major parametric model of DPPs that has attracted attention in recent years is that of the \textit{Gaussian Determinantal Processes}, abbrv. GDP \citep{ghosh2020gaussian}. In particular, it was shown in \cite{ghosh2020gaussian} that a certain matrix-valued statistic
\begin{equation}\label{eq:est_GDP}
    \hat{\Sigma} = \frac{1}{2n^2}\sum_{1 \le i, j \le n} \bbI(\|X_i - X_j\| \le r) \, (X_i - X_j)(X_i - X_j)^\top,
\end{equation}
where $(X_i)_{i=1}^n \subset \mathbb{R}^p$ are data points and $r$ is a suitably chosen threshold, effectively performs parameter estimation in the GDP model.
$\hat{\Sigma}$ may be viewed as a certain truncation of the sample covariance matrix (based on pairwise distances between data points), Further, it was demonstrated empirically in \cite{ghosh2020gaussian} that this matrix-valued test statistic can be leveraged as an ansatz to build dimension reduction tools that arguably  outperform standard PCA based methods, especially in the context of clustering applications.

\begin{figure}[t!]
\centering
\begin{tabular}{cc}
    \includegraphics[scale = 0.25]{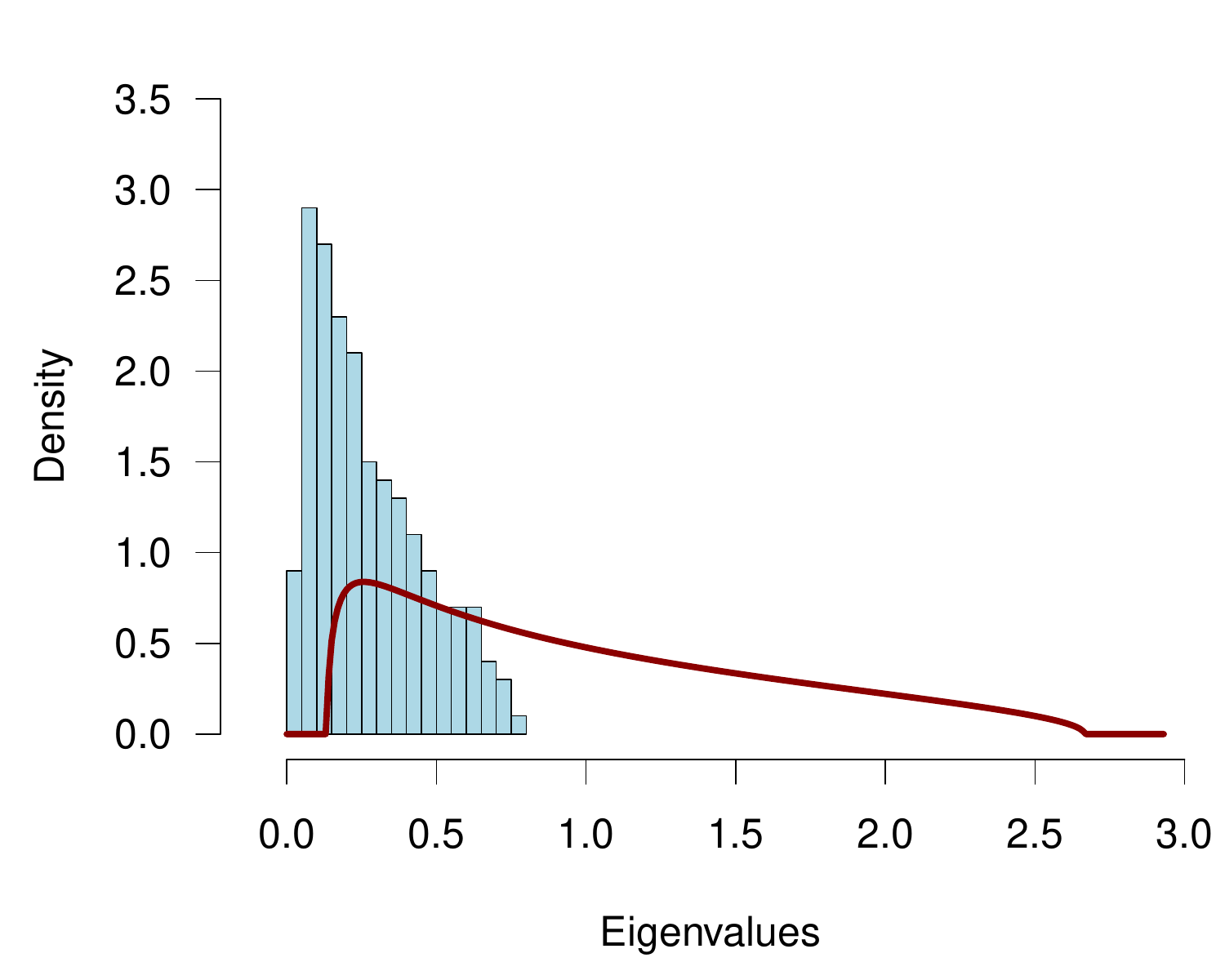} & \includegraphics[scale = 0.25]{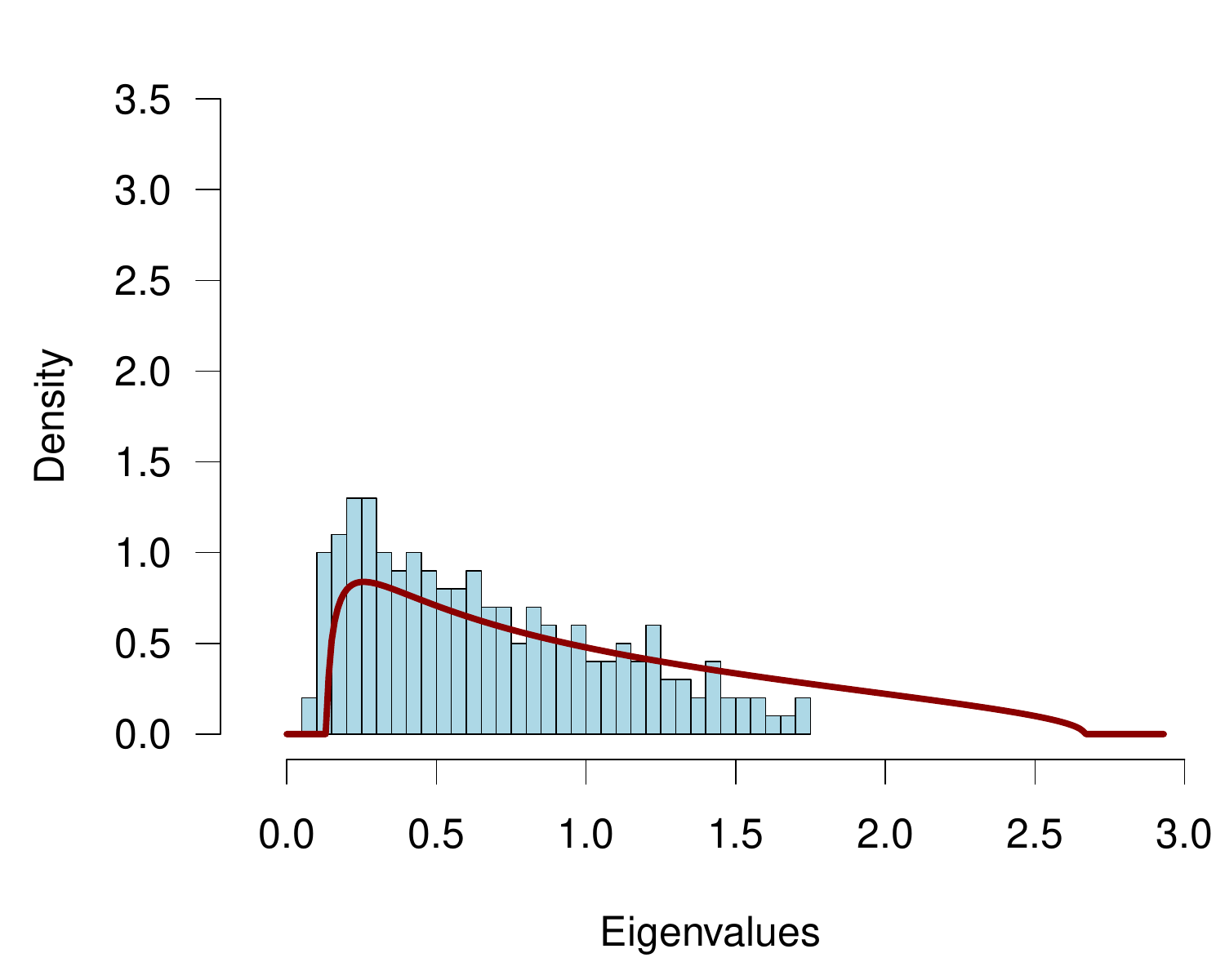} \\
    (a) $\beta = -0.1$  & (b) $\beta = 0.1$ \\
    \includegraphics[scale = 0.25]{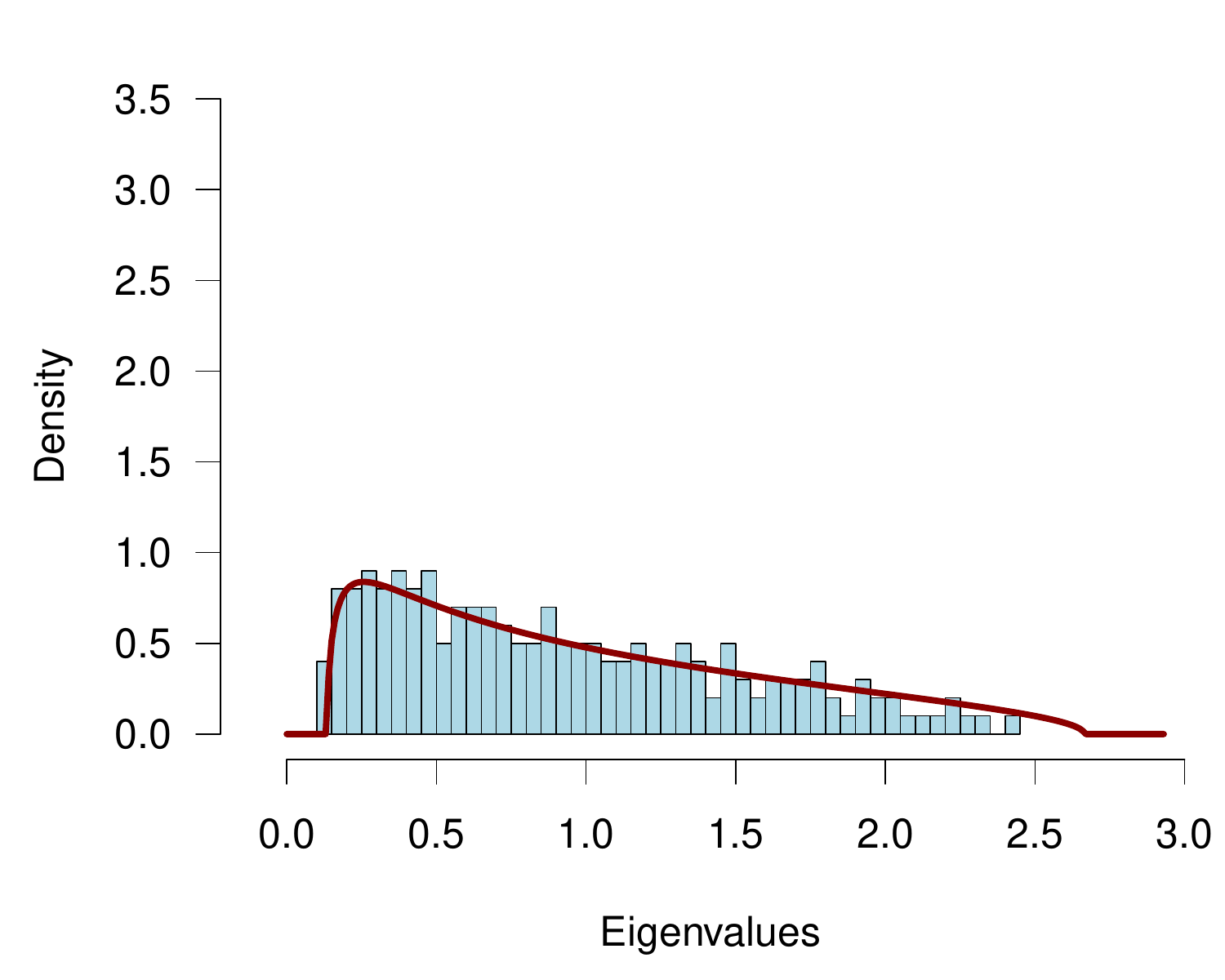} & \includegraphics[scale = 0.25]{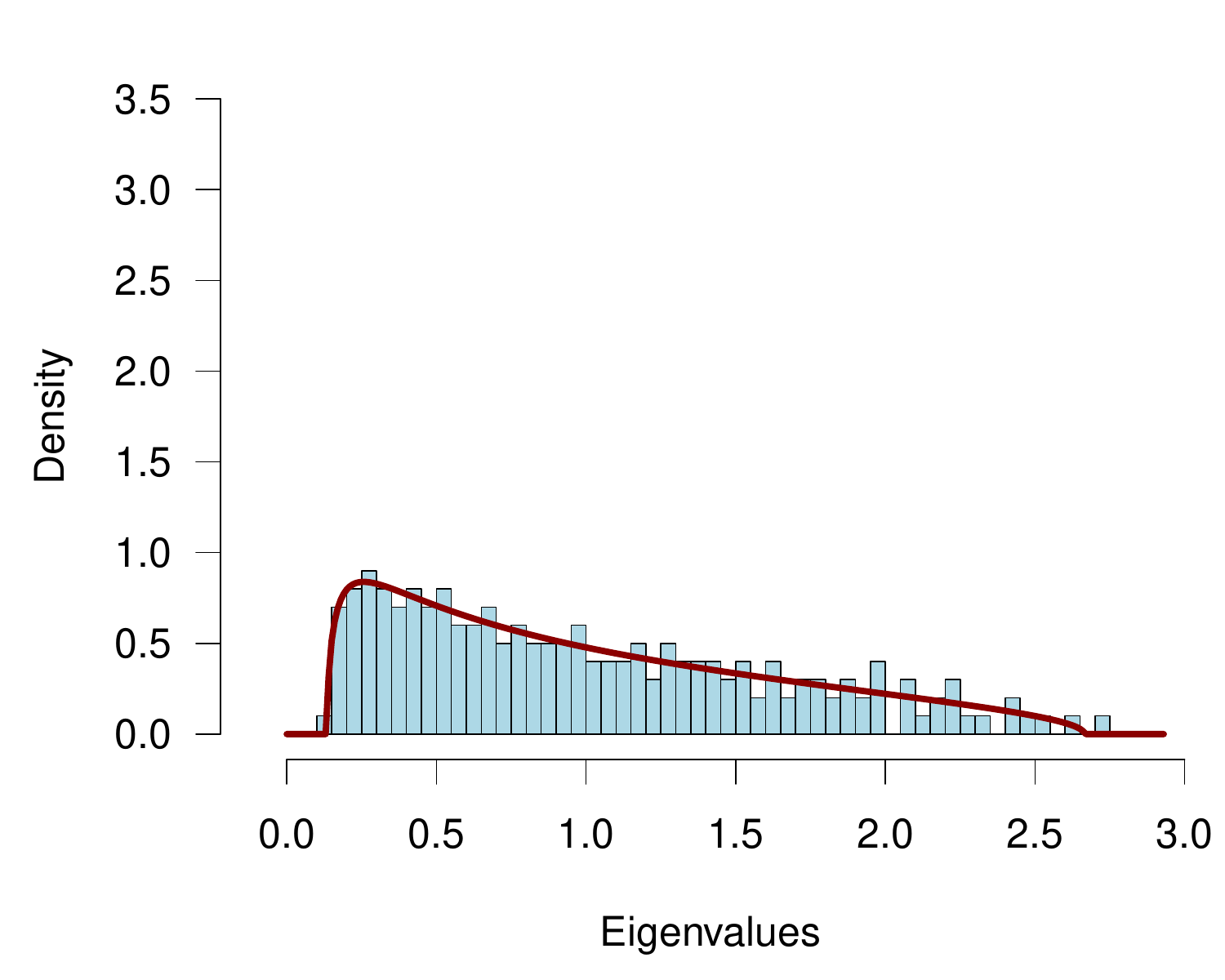} \\
    (c) $\beta = 0.3$ & (d) $\beta = \infty$
\end{tabular}
\caption{Empirical spectral distribution of $\hat{\Sigma}$ for $r = r(\beta) = \sqrt{(2 + \beta) \sigma^2 p}$ (one realisation for each value of $\beta$). Here $p  = 200$, $n = 500, \sigma = 1$. The red curves denote the density of $\MP_{p/n, \sigma^2}$. The case $\beta = \infty$ corresponds to the sample covariance matrix.}
\label{fig:simu_1}
\end{figure}

An understanding of this latter phenomenon would require an understanding of the spectrum of the matrix in \eqref{eq:est_GDP}. In this paper, we take a first step towards this by studying its bulk spectrum, modelling the data as i.i.d. centered random variables. This is the most fundamental and basic setting in which one first needs to understand the behaviour and properties of the matrix in \eqref{eq:est_GDP}.

In fact, we are able to analyse a broader class of matrix-valued statistics which widely generalises $\hat{\Sigma}$, by incorporating a general class of kernel functions $K(X_i,X_j)$ in lieu of the distance based cutoff function $\bbI(\|X_i - X_j\| \le r)$. We perform a detailed analysis of the bulk spectrum of this broad class of kernelised random matrices and obtain a concrete description of their limiting spectral distributions as the size of the dataset $n$ and the dimension $p$ go to $\infty$ in a way such that $p/n \to c \in (0, \infty)$ (the so-called \emph{proportional asymptotics regime}). In particular, in the \emph{smooth} case, where the kernelised interaction is a suitably regular function of their mutual interaction, we can explicitly characterise the limiting spectral distribution as a parameterised family of Mar\v{c}enko-Pastur laws (see Theorem~\ref{thm:conv_smooth}). In the non-smooth case, we obtain a certain generalised Mar\v{c}enko-Pastur law as the limiting spectral distribution (see Theorem~\ref{thm:conv_nonsmooth}). See Figure~\ref{fig:simu_1} for histograms of the bulk spectra of $\hat{\Sigma}$ for different choices of the threshold $r$. See Figure~\ref{fig:simu_2} for an example of a kernelised version of $\hat{\Sigma}$ with a smooth kernel. We also obtain the limiting spectral distribution in the semi-high-dimensional regime where $p / n \to 0$ and $p \gg \sqrt{n}$ (see Theorem~\ref{thm:conv_semi_high_dim}).

\begin{figure}[t!]
\centering
\begin{tabular}{cc}
    \includegraphics[scale = 0.25]{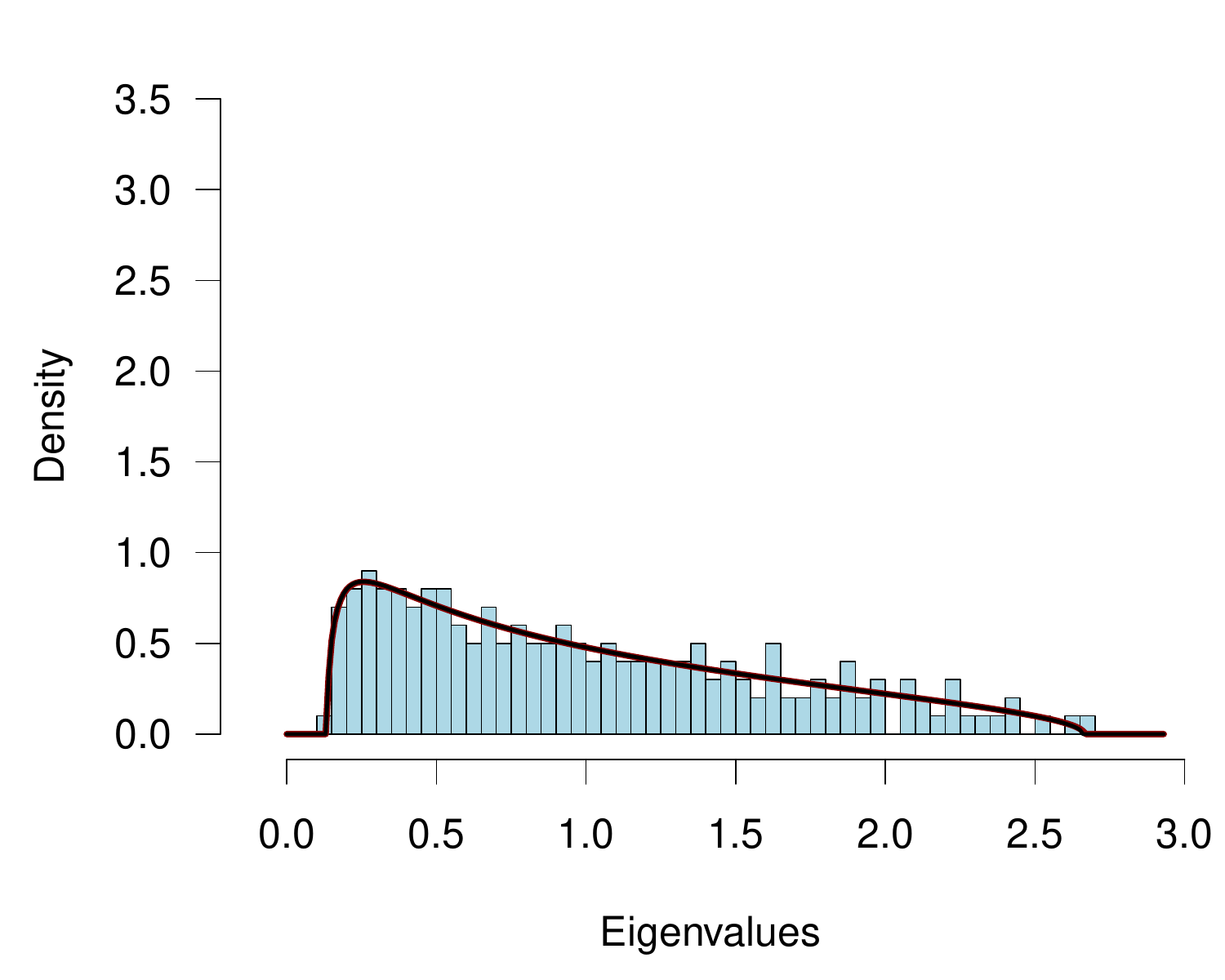} & \includegraphics[scale = 0.25]{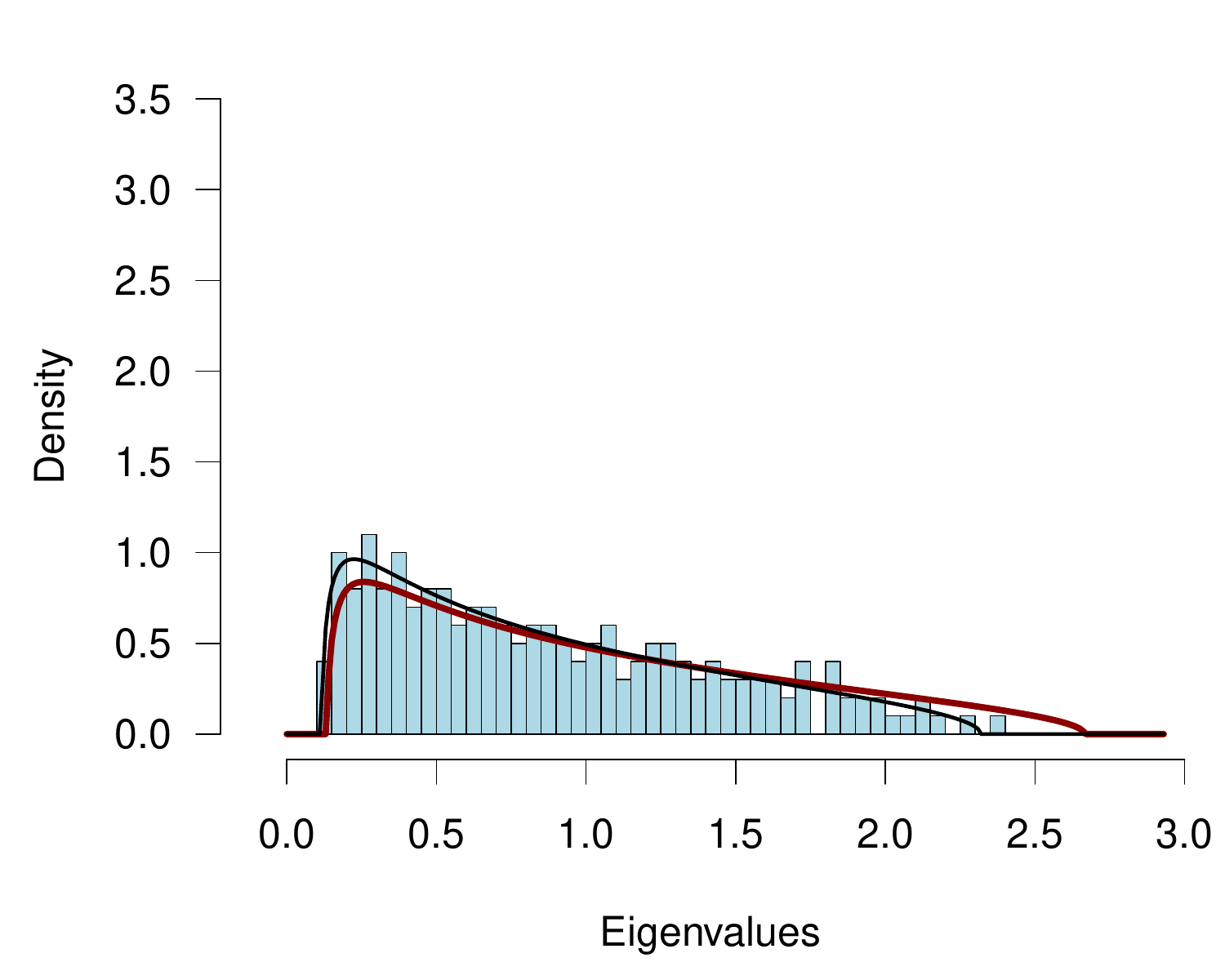} \\
    (a) $\tau = 0.4$  & (b) $\tau = 0.7$ \\
    \includegraphics[scale = 0.25]{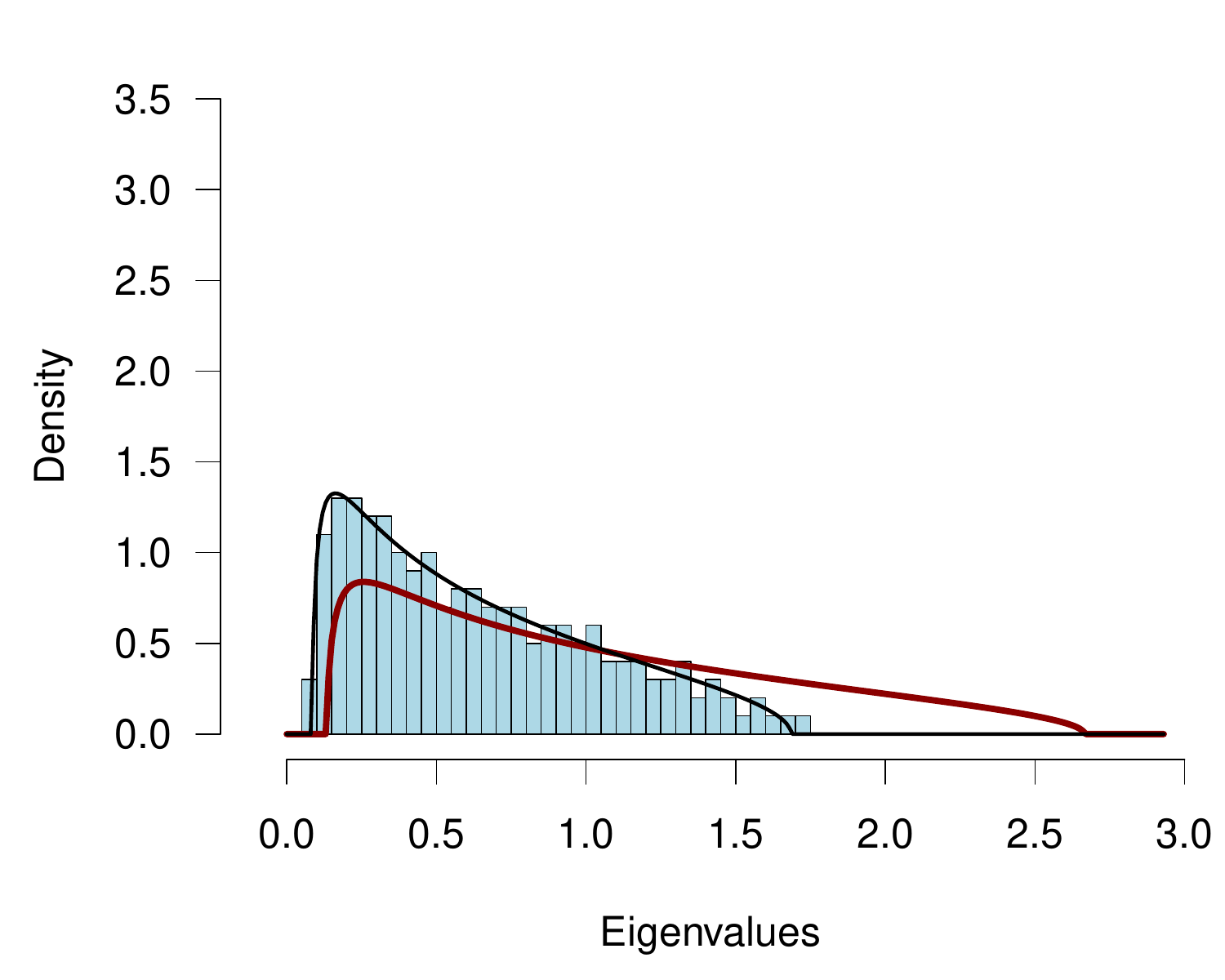} & \includegraphics[scale = 0.25]{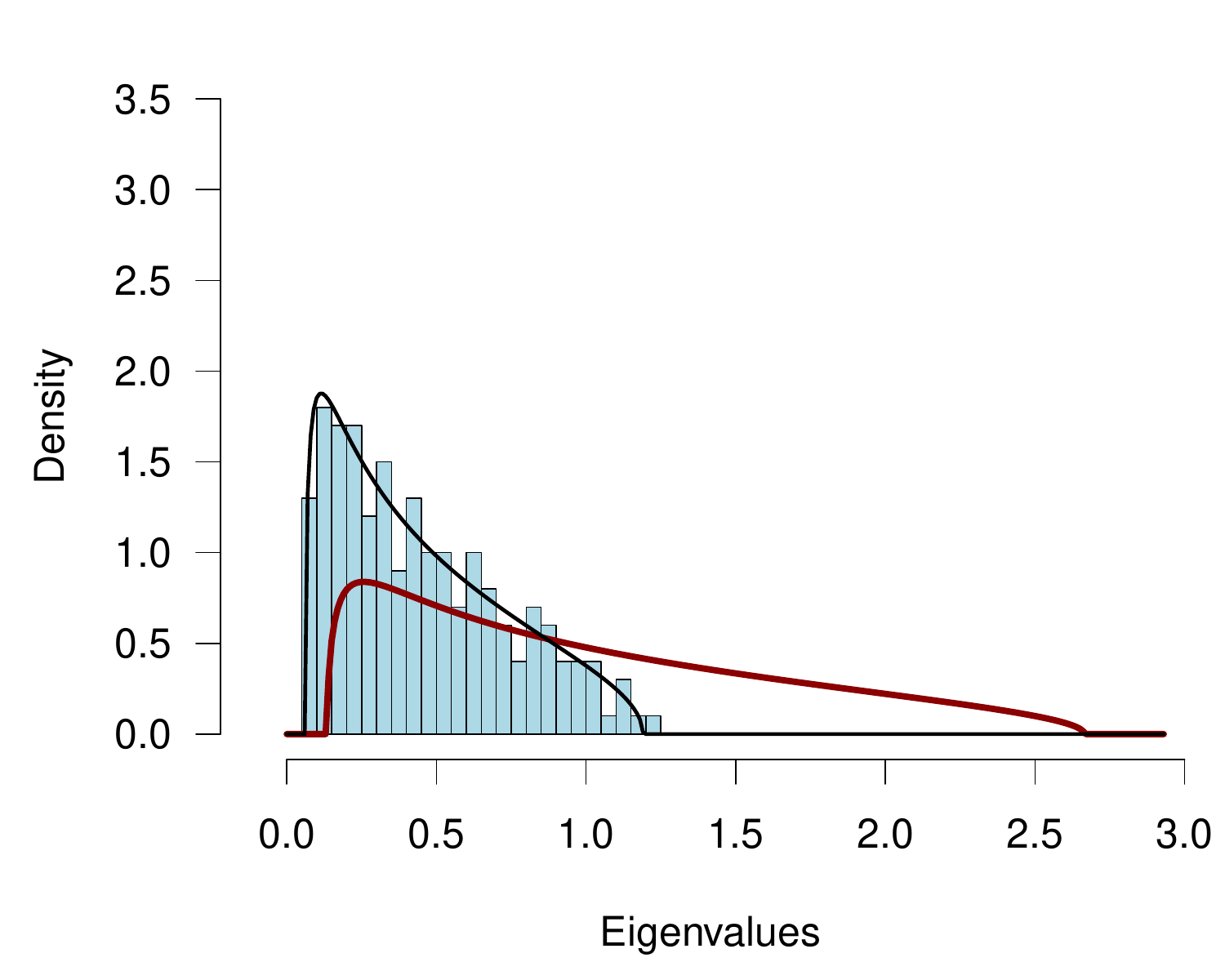} \\
    (c) $\tau = 1$ & (d) $\tau = 1.3$
\end{tabular}
\caption{Empirical spectral distribution of $M$ (defined in \eqref{eq:def_trunc_cov}) with the Gaussian kernel $K(x, y) = 1 - \exp\big(-\frac{\|x - y\|^2}{2 p \tau^2}\big)$ (one realisation for each value of $\tau$). Here $p  = 200$, $n = 500, \sigma = 1$. The red curves denote the density of $\MP_{p/n, \sigma^2}$. The black curves depict the density of $\MP_{p/n, (1 - \exp(-\sigma^2/\tau^2))^2 \sigma^2)}$. As $\tau$ approaches $0$, the spectrum approaches that of the sample covariance matrix.}
\label{fig:simu_2}
\end{figure}

The main insight that goes into analysing the spectrum of the matrix in \eqref{eq:est_GDP} is to represent it as a matrix-valued Rayleigh quotient:
\begin{equation}\label{eq:matrix_Rayleigh_quotient}
    \frac{1}{n^2} X L X^T,
\end{equation}
where $L$ is the Laplacian matrix of the \emph{random geometric graph} on $n$ vertices whose edges are given by $\bbI(\|X_i - X_j\| \le r)$. It is to be noted that matrices of the form $X A X^\top$, where $A$ is a positive semi-definite matrix \emph{independent of X} have been studied in the literature in great detail. For example, in \cite{bai1995genmarcenko}, the authors considered the matrix $\frac{1}{n}XAX^\top$, where $X$ is a $n \times p$ (note that the roles of $n$ and $p$ are reversed in their notation but this is only a cosmetic difference) matrix of i.i.d. entries with zero mean and unit variance and $A$ is a diagonal matrix having some deterministic limiting spectral distribution $\mu_A$. If $X$ and $A$ are independent and $\frac{p}{n} \to y$, then their result says that the above-mentioned matrix has a deterministic limiting spectral distribution, whose Stieltjes transform $s$ is described as the unique solution, in the upper half plane $\mathbb{C}^+ = \{z \in \mathbb{C} : \Im z > 0\}$, of the equation
\begin{equation}\label{eq:gen_marcenko}
    s(z) = \frac{1}{-z + y\int \frac{t d\, \mu_A(t)}{1+ts(z)}},
\end{equation}
for $z \in \mathbb{C}^+$. The resulting limiting spectral distribution is called a \emph{generalised Mar\v{c}enko-Pastur Law} which also admits the following free probabilistic interpretation: it is the free multiplicative convolution of $\mu_A$ and the Mar\v{c}enko-Pastur law. In a more recent work, under certain additional assumptions, \cite{knowles2017anisotropic} obtained local laws for the matrices $\frac{1}{n}X A X^\top$ and $\frac{1}{n} A^{1/2}XX^\top A^{1/2}$, where $A$ is a deterministic matrix. The crucial difference of our model from these existing works is that the Laplacian matrix $L$ is \emph{dependent} on $X$. As such we need use careful decoupling arguments to analyse its spectrum. Obtaining local laws under our setting is an interesting direction for future research.

It may also be observed that $\hat{\Sigma}$, and its kernelised generalizations, belong to the wider family of \textit{matrix-valued $U$-statistics}. As such, our results also contribute to the burgeoning theory of matrix-valued $U$-statistics and their applications. For instance, the spectrum of a matrix-valued Kendall's $\tau$ statistic was studied recently by \cite{bandeira2017marchenko}. They showed that if $X_1, X_2, \ldots, X_n$ are i.i.d. $p$-dimensional random vectors with independent entries from a continuous distribution, then the empirical spectral distribution of the matrix-valued Kendall's $\tau$ statistic, defined as
\[
    \tau = \frac{1}{\binom{n}{2}} \sum_{1\leq i <j \leq n} \mathrm{sign}(X_i - X_j) \, \mathrm{sign}(X_i - X_j)^\top,
\]
converges weakly to $\frac{1}{3} + \frac{2}{3}Y$, in probability, where $Y$ follows the standard Mar\v{c}enko-Pastur distribution (here the sign function is applied componentwise). The proof heavily relies on a matrix version of the Hoeffding decomposition for $U$-statistics. Although, the matrix \eqref{eq:est_GDP} is also a matrix-valued $U$-statistic, the presence of the cutoff factor makes a direct use of Hoeffding decomposition difficult. Instead, we directly analyse the Stieltjes transform of the empirical spectral distribution.

The rest of the paper is organised as follows. In Section~\ref{sec:prelim}, we describe the model under consideration and recall preliminaries of random matrices. In Section~\ref{sec:main} we state our main results and work out some examples. We also provide brief proof sketches of our main results in this section. Section~\ref{sec:proofs} gives detailed proofs of all the results. Finally, in Appendix~\ref{sec:aux}, we collect some useful results from matrix analysis and concentration of measure which are used throughout the paper.

\section{The model}\label{sec:prelim}
Suppose $w_{ij}$, $i \in [p], j \in [n]$ are i.i.d. random variables on some probability space $(\Omega, \mathcal{F}, \mathbb{P})$. Assume $\bbE w_{11} = 0$,  $\V(w_{11}) = \sigma^2$ and $\bbE w^4_{11} < \infty$. Define the $p$-dimensional vectors $X_j= (w_{1j}, w_{2j},\ldots , w_{pj})^\top$, $j = 1, 2, \ldots, n$. $X$ is the $p \times n$ matrix with $X_j$'s as columns. Also define $\bar{X} = \frac{1}{n} \sum_{i = 1}^n X_i$.

We consider two asymptotic regimes:
\begin{enumerate}
    \item The proportional asymptotics regime: $\frac{p}{n} \to c \in (0, \infty)$.
    \item The semi-high-dimensional regime: $\frac{p}{n} \to 0$.
\end{enumerate}

Suppose that $K_p:\mathbb{R}^p \times \mathbb{R}^p \to [0,1]$ is a function symmetric in its coordinates, that is $K_p (U, V) = K_p (V, U)$. Let $A$ denote the $n \times n$ symmetric random matrix with entries $A_{ij} = K_p (X_i, X_j)$.

The Empirical Spectral Distribution (ESD) of a real symmetric matrix $Y_{n \times n}$ is defined as
\[
    \mu_Y = \frac{1}{n}\sum_{i = 1}^{n}{\delta_{\lambda_i}},
\]
where $\lambda_1, \lambda_2, \ldots, \lambda_n$ are the eigenvalues of $Y$. The weak limit of the ESD (defined almost surely or in probability depending on the context) is called the Limiting Spectral Distribution (LSD).

In this paper we are interested in the \emph{truncated covariance matrix}
\begin{equation}\label{eq:def_trunc_cov}
    M = \frac{1}{2n^2} \sum_{1 \leq i, j \leq n} A_{ij} (X_i - X_j)(X_i - X_j)^\top,
\end{equation}
which is a generalisation of the estimator in \eqref{eq:est_GDP}.

Notice that if $K \equiv 1$, then
\[
    M = \frac{1}{2n^2} \sum_{1 \leq i, j \leq n} (X_i - X_j)(X_i - X_j)^\top = \frac{1}{n} \sum_{i = 1}^n (X_i - \bar{X})(X_i - \bar{X})^\top,
\]
which is the sample-covariance matrix of the observations $X_1, \ldots, X_n$. As it is a rank-$1$ perturbation of the matrix $\frac{1}{n} X X^\top$ (which will also be called the sample-covariance matrix), they share the same LSD.

In the proportional asymptotic regime, it is well known that the sample-covariance matrix $\frac{1}{n} X X^\top$ has as its LSD the Mar\v{c}henko-Pastur distribution $\MP_{c, \sigma^2}$ with parameters $(c, \sigma^2)$. Recall that when $c \in (0, 1]$, $\MP_{c, \sigma^2}$ has density
\[
    d\MP_{c, \sigma^2}(x) = \frac{1}{2 \pi \sigma^2} \frac{\sqrt{(b - x)(x - a)}}{cx} \mathbb{I}_{(a, b)}dx,
\]
where $a = \sigma^2 (1 - \sqrt{c})^2$ and $b = \sigma^2(1 + \sqrt{c})^2$. When $c > 1$, $\MP_{c, \sigma^2}$ has a mass of $(1 - \frac{1}{c})$ at $0$, the remaining part has the same density as above, i.e.
\[
    \MP_{c, \sigma^2} = \bigg(1 - \frac{1}{c}\bigg) \delta_0 + \frac{1}{c} \nu,
\]
where
\[
    d\nu(x) = \frac{1}{2 \pi \sigma^2} \frac{\sqrt{(b - x)(x - a)}}{cx} \mathbb{I}_{(a, b)}dx.
\]
We will see that when the kernel $K_p$ is Lipschitz and the entries $w_{ij}$ satisfy some regularity conditions, then the LSD of $M$ is a scaled Mar\v{c}enko-Pastur law (see Theorem~\ref{thm:conv_smooth}). However, if $K_p$ is non-smooth, then a different LSD emerges, which is a generalised Mar\v{c}enko-Pastur law (see Theorem~\ref{thm:conv_nonsmooth}).

On the other hand, in the semi-high-dimensional regime, one requires different scaling and centering. In is well known that the ESD of $\sqrt{\frac{n}{p}}(\frac{1}{n}XX^\top - I)$ converges to the standard semi-circle law (see \cite{bai1988semicircle}, p. 864). The semi-circle law $\SC_{\varpi^2}$ with variance $\varpi^2 > 0$ is defined as
\[
    d\SC_{\varpi^2} (x) = \frac{1}{2 \pi \varpi^2} \sqrt{4\varpi^2 - x^2} \,\, \bbI(|x| \le 2\varpi) \, dx.
\]
For $\varpi = 1$, we have the standard semi-circle law.

In our setup, we prove that under certain conditions on the moments of $A_{12}$, in the regime
\begin{equation}\label{ass:dimension}
    p \gg \sqrt{n},
\end{equation}
the ESD of
\begin{equation}\label{eq:def_normalised_trunc_cov}
    E = \sqrt{\frac{n}{p}}(M - \alpha_p\sigma^2 I)
\end{equation}
converges weakly to a semi-circle law with parameters depending on $\bbE A_{12}^2$ and $\sigma^2$, almost surely. We also describe the Stieltjes transform of the limiting distribution (see Theorem~\ref{thm:conv_semi_high_dim}).

\section{Main results}\label{sec:main}
\subsection{The non-smooth case}
Let $d : \mathbb{R}^2 \to \mathbb{R}$ be a symmetric function such that
\begin{equation}
    \mathbb{E}|d(w_{11},w_{12})|^3< \infty.
\end{equation}
Typical examples of $d(x,y)$ are $(x-y)^2$ or $|x-y|$. We define $d_p:\mathbb{R}^p\times \mathbb{R}^p\to \mathbb{R}$, $d_p(x,y)=\sum_{i=1}^{p} d(x_i,y_i)$. Let $\phi_p:\mathbb{R}\to [0,1]$ be monotonic and potentially dependent on $p$. For the purpose of the first theorem we shall assume that $K_p$ has the following form
\[
    K_p(x,y)=\phi_p(d_p(x,y)).
\]
Notice that this class of kernels includes the indicator kernel $\bbI(\norm {x-y}\leq r_p)$ and the Gaussian kernel $1- \exp\big(-\frac{\norm{x-y}^2}{2\tau^2_p}\big)$, where $r_p, \tau_p$ are suitable constants.

We shall also require some limiting properties of the sequence of functions $(\phi_p)$. We state them now. First fix the following notations:
\begin{align*}
    m_1 &=\bbE [d(X_{11},X_{12})]; \\
    m_2 &=\text{Var}(d(X_{11},X_{12})); \\
    m_2^{(1)} &= \text{Var}(\mathbb{E}[d(X_{11},X_{12})|X_{11}]); \\
    m_2^{(2)} &= \mathbb{E}\text{Var}(d(X_{11},X_{12})|X_{11}).
\end{align*}
Define the functions $\psi_p,\tilde{\phi}_p: \mathbb{R}\to \mathbb{R}$ as $\psi_p(x)=pm_1+\sqrt{pm_2}x$ and $\tilde{\phi}_p=\phi_p\circ\psi_p$.
\begin{assumption}\label{ass:phi_condition}
Suppose there exists $\tilde{\phi}$ such that for any $\epsilon>0$,
\begin{equation}
    \Leb(|\tilde{\phi}_p-\tilde{\phi}|>\epsilon)\to 0
\end{equation}
as $p \to \infty$ where $\Leb$ is the Lebesgue measure. In other words, $\tilde{\phi}_p$ converges in Lebesgue measure to $\tilde{\phi}$.
\end{assumption}

Our first theorem describes the LSD of $M$ in terms of its Stieltjes transform. Throughout the paper $z$ will denote a complex number with $u=\Re z$ and $v=\Im z$, i.e. $z = u + \iota v$. Recall that the Stieltjes transform $S_{\mu}$ of a probability measure $\mu$ on $\mathbb{R}$ is a complex function defined for $z\in\mathbb{C}^+$ as follows:
\[
    S_\mu(z) := \int\frac{d\mu(x)}{x-z}.
\]
Suppose $\{\mu_n\}_{n \geq 1}, \mu$ are probability measures on $\mathbb{R}$ with Stieltjes transforms $\{S_{\mu_n}\}_{n \ge 1}$ and $S_\mu$, respectively. It is well known that $S_{\mu_n}\to S_\mu$ pointwise on $\mathbb{C}^+$ if and only if $\mu_n \to \mu$ weakly (see, e.g., \cite{anderson2010introduction}). Moreover, if $\{\mu_n\}_{n \geq 1}$ are random probability measures and $\mu$ is a deterministic probability measure, then $S_{\mu_n} (z) \to S_\mu (z)$ almost surely for each fixed $z \in \mathbb{C}^+$ if and only if $\mu_n \to \mu$ almost surely.
\begin{theorem}\label{thm:conv_nonsmooth}
Suppose that Assumption~\ref{ass:phi_condition} holds. Then the ESD of $M$ converges weakly to a deterministic distribution, almost surely. Moreover, if $s(z)$ is the Stieltjes transform of the limiting distribution, then $s(z)$ is the unique solution in $\mathbb{C}^+$ of the following equation:
\begin{equation}\label{eq:non_smooth_ST}
    1 + zs(z) = \mathbb{E}_{\zeta}\bigg[\frac{\sigma^2 s(z) \zeta }{1+c\sigma^2 s(z) \zeta }\bigg],
\end{equation}
where
\begin{equation}\label{eq:zeta_defn}
    \zeta = \bbE_{Z_2} \bigg[\tilde{\phi}\bigg(\sqrt{\frac{m_2^{(1)}}{m_2}}Z_1 + \sqrt{\frac{m_2^{(2)}}{m_2}}Z_2\bigg) \bigg],
\end{equation}
with $Z_1, Z_2$ being i.i.d. $N(0,1)$ random variables.
\end{theorem}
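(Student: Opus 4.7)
My plan is to analyze $M$ via the Stieltjes transform $s_n(z) = \frac{1}{p}\Tr(M-zI)^{-1}$ and to derive the fixed-point equation \eqref{eq:non_smooth_ST} through a Silverstein-type argument, after first reducing $M$ to a weighted sample covariance matrix $\tilde M := \frac{1}{n}\sum_i \zeta_i X_i X_i^\top$ in which each weight $\zeta_i$ is a one-dimensional summary statistic of the column $X_i$.

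The first step is to decouple the kernel. Applying the Hoeffding decomposition $d(x,y) = m_1 + h_1(x) + h_1(y) + h_2(x,y)$ coordinatewise and summing the $p$ independent contributions, a conditional CLT gives
\[
    d_p(X_i,X_j) \approx p m_1 + \sqrt{p m_2^{(1)}}\, Y_i + \sqrt{p m_2^{(2)}}\, \eta_{ij},
\]
where $Y_i := (p m_2^{(1)})^{-1/2}\sum_k h_1(w_{ki})$ is a function of $X_i$ that is asymptotically $N(0,1)$, and, conditionally on $X_i$, the $\eta_{ij}$ are asymptotically i.i.d.\ $N(0,1)$ across $j \ne i$. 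Combining this with Assumption~\ref{ass:phi_condition} yields $A_{ij} \approx \tilde\phi\bigl(\sqrt{m_2^{(1)}/m_2}\, Y_i + \sqrt{m_2^{(2)}/m_2}\, \eta_{ij}\bigr)$, and averaging over $j$ (for fixed $X_i$) gives $D_i/n \to \zeta_i := \bbE_\eta\bigl[\tilde\phi\bigl(\sqrt{m_2^{(1)}/m_2}\, Y_i + \sqrt{m_2^{(2)}/m_2}\, \eta\bigr)\bigr]$. Since $Y_i$ is asymptotically standard normal, the empirical measure of $(\zeta_i)_{i=1}^n$ converges to the law of the variable $\zeta$ defined in \eqref{eq:zeta_defn}.

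Writing $M = \frac{1}{n^2} X L X^\top$ with $L = D - A$, the diagonal part $\frac{1}{n^2} X D X^\top = \frac{1}{n}\sum_i (D_i/n) X_i X_i^\top$ is, by the above, spectrally close to $\tilde M$. The off-diagonal part $\frac{1}{n^2} X A X^\top$ splits as $\alpha_0\, \bar X \bar X^\top + \frac{1}{n^2} X(A - \alpha_0 J) X^\top$, where $\alpha_0 = \bbE \zeta$ and $J$ is the all-ones matrix; the first is a rank-one matrix of bounded operator norm, and the second has vanishing operator norm provided $\|A - \alpha_0 J\|_{\op} = o(n)$, which I would verify via a matrix concentration bound tailored to the dependence structure of $A$. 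Both pieces are therefore spectrally negligible, reducing the LSD problem for $M$ to that of $\tilde M$. For $\tilde M$, the usual Silverstein manipulation --- applying Sherman-Morrison to the rank-one update $\tilde M = \tilde M^{(i)} + (\zeta_i/n) X_i X_i^\top$ and concentrating the quadratic form $X_i^\top(\tilde M^{(i)} - zI)^{-1} X_i \approx \sigma^2 \Tr(\tilde M^{(i)} - zI)^{-1} \approx \sigma^2 p\, \tilde s_n(z)$ --- yields the prelimit identity
\[
    1 + z \tilde s_n(z) = \frac{1}{n}\sum_{i=1}^n \frac{\sigma^2 \zeta_i\, \tilde s_n(z)}{1 + c \sigma^2 \zeta_i\, \tilde s_n(z)},
\]
and averaging with respect to the empirical distribution of $(\zeta_i)$ produces \eqref{eq:non_smooth_ST} in the limit. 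Uniqueness of the solution in $\mathbb{C}^+$ follows from a standard monotonicity argument.

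The main obstacle is the coupling between the weights $\zeta_i$ and the vectors $X_i$. The standard Silverstein step requires $X_i$ to be independent of $\tilde M^{(i)}$, which fails here because each $\zeta_j$ (for $j \ne i$) has a weak residual dependence on $X_i$ through the $\eta_{ji}$ coming from $X_i$. The fix is that this residual dependence passes only through a one-dimensional projection of $X_i$, and can be bounded via the rank-one perturbation inequality $|s_n - s_n^{(i)}| \le (p\,\Im z)^{-1}$ combined with quantitative concentration of $D_i/n$ around $\zeta_i$ and of the relevant quadratic forms around their conditional means. Making these approximations tight, uniformly in $z$ on compact subsets of $\mathbb{C}^+$, will occupy the bulk of the technical effort.
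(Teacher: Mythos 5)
Your high-level plan matches the paper's: reduce $M$ to a weighted sample covariance matrix $\frac{1}{n}\sum_i \xi_i X_i X_i^\top$ with column-wise weights that converge in distribution to the $\zeta$ of \eqref{eq:zeta_defn}, then run a Silverstein-type fixed-point argument. The Hoeffding-decomposition/CLT computation of $\zeta$ and the shape of the prelimit identity are essentially the paper's. However, your treatment of the off-diagonal piece $\frac{1}{n^2}XAX^\top$ contains a genuine gap. You propose to write $A = \alpha_0 J + (A-\alpha_0 J)$, handle the rank-one part separately, and kill $\frac{1}{n^2}X(A-\alpha_0 J)X^\top$ by showing $\norm{A-\alpha_0 J}_{\op}=o(n)$. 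But in the non-smooth regime this operator-norm bound is \emph{false}. The point of Theorem~\ref{thm:conv_nonsmooth} is precisely that $\xi_i := \bbE[K_p(X_i,V)\mid X_i]$ does \emph{not} concentrate around $\alpha$: the limit $\zeta$ has nondegenerate variance (e.g.\ $\zeta=\Phi(\tfrac{1}{\sqrt3}Z+\tfrac{2}{\sqrt3}z_\alpha)$ for the indicator kernel). Decomposing $A_{ij}-\alpha = (\xi_i-\alpha)+(\xi_j-\alpha)+\xi''_{ij}$, the rank-two piece $(\xi-\alpha\bone)\bone^\top + \bone(\xi-\alpha\bone)^\top$ already has operator norm $\approx n\sqrt{\V(\zeta)} = \Theta(n)$, so $\norm{A-\alpha_0 J}_{\op}=\Theta(n)$. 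The paper sidesteps this entirely: using $d_{W_2}(\mu_M,\mu_{\tilde M})\le \norm{M-\tilde M}_{\hs}/\sqrt{p}$, the crude bound $\norm{A}_{\hs}\le n$ already gives $\frac{1}{n^2\sqrt p}\norm{X^\top X}_{\op}\norm{A}_{\hs}=O(1/\sqrt p)\to 0$, with no concentration of $A$ needed. Operator-norm concentration of $A$ is exactly the ingredient reserved for the smooth case (Theorem~\ref{thm:conv_smooth}), where the Lipschitz hypothesis makes it available.

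Two smaller remarks. First, your worry about ``residual dependence of $\zeta_j$ ($j\ne i$) on $X_i$'' dissolves once the weight is defined as $\xi_i=\bbE[K_p(X_i,V)\mid X_i]$ with a fresh independent $V$: this is a measurable function of $X_i$ alone, so $\bar M^{(i)}$ is genuinely independent of $X_i$, and the fluctuation piece involving the cross terms lives in a separate diagonal matrix ($D_2$ in the paper) that is killed by Hoeffding's inequality. The paper also uses a cleaner device for the Silverstein step than leave-one-out: it \emph{adds} a fresh independent column $(\xi_0,X_0)$ to form $\bar M'=\bar M+\frac1n\xi_0 X_0X_0^\top$, exploits exchangeability of the $n+1$ summands, and applies Sherman--Morrison, so that the quadratic form to concentrate is against a resolvent that is exactly independent of $X_0$. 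Second, the paper does not pre-replace $\xi_i$ by its distributional limit inside the matrix; it keeps the exact $\xi_i$ throughout and only uses $\xi_i\convd\zeta$ (via Berry--Esseen plus Assumption~\ref{ass:phi_condition}) at the very last step, where the relevant map is Lipschitz in the weight. This avoids the additional, and trickier, Wasserstein bookkeeping your plan would require.
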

\begin{remark}
Suppose $\xi_1, \xi_2, \ldots, \xi_n$ are i.i.d. bounded random variables such that $\xi_i$ and $X_j$ are independent for all $i \neq j$. Further assume that $\xi_1$ converges in distribution to some variable $\zeta$. Then, the proof of Theorem~\ref{thm:conv_nonsmooth} will show that the LSD of $\frac{1}{n}\sum_{i=1}^n \xi_i X_i X_i^\top$ is given by \eqref{eq:non_smooth_ST}. This result is known if we further assume that $\xi_i$ is independent of $X_i$, but here we allow them to depend.
\end{remark}
\begin{remark}
If $M$ is represented as a matrix-valued Rayleigh quotient $\frac{1}{n^2} X L X^\top$, our proof will show that the ESD of $\frac{1}{n}L$ will converge weakly to $\zeta$ as defined in \eqref{eq:zeta_defn}. Now, a moment's thought will reveal that the equations \eqref{eq:gen_marcenko} and \eqref{eq:non_smooth_ST} are equivalent once we make the necessary adjustments for the scaling. In other words, even though $X$ and $L$ are dependent and $L$ is not diagonal, a generalised Mar\v{c}enko-Pastur law emerges as the LSD.
\end{remark}
\begin{example}[Indicator kernel]\label{ex:indicator_kernel}
We first consider kernel
\[
    K_p(x, y) = \bbI(\|x - y\| \leq r_p),
\]
where $r_p$ is an appropriate threshold. This gives us the estimator \eqref{eq:est_GDP} that motivated the present study. In this case, we will assume that $w_{11}$ is Gaussian. At the least, we shall need that $\alpha_{p} \coloneqq\bbE K_p(X_1, X_2)$ converges to a nonzero quantity as $p \to \infty$. The suitable choice for $r_p$ turns out to be
\[
    r_p^2= ((2p+2\sqrt{2p}z_\alpha) \sigma^2+o(\sqrt[2]{p})).
\]
For $\alpha\in(0,1)$, $z_\alpha=\Phi^{-1}(\alpha)$, where $\Phi$ is the distribution function of the standard normal variable. Observe that $\frac{\norm{X_1-X_2}^2}{2\sigma^2}\sim \chi^2_p$. Using the central limit theorem, $U=\frac{\frac{\norm{X_1-X_2}^2}{2\sigma^2}-p}{\sqrt{2p}}$ converges, in distribution, to a standard Gaussian. Notice that
\begin{align*}
    \alpha_{p} &= \mathbb{P}(\norm{X_1-X_2}\leq r_p) \\
    &=\mathbb{P}\bigg(\frac{\frac{\norm{X_1-X_2}^2}{2\sigma^2}-p}{\sqrt{2p}}\leq\frac{\frac{r_p^2}{2\sigma^2}-p}{\sqrt{2p}}\bigg) \\
    &= \mathbb{P}\left(U\leq z_\alpha+ o(1)\right) \\
    &= \alpha + o(1),
\end{align*}
as $p\to \infty$. Now, in this case, $\phi_p (t) = I(t \leq r_p^2)$ and $d(x, y) = (x-y)^2$. An easy computation shows that
\begin{align*}
    m_1 = 2 \sigma^2, & \quad m_2 = 8 \sigma^4, \\
    m_2^{(1)}= 2 \sigma^4, & \quad m_2^{(2)} = 6 \sigma^4.
\end{align*}
Then $\tilde{\phi}_p$ and $\tilde{\phi}$ turn out to be as follows:
\begin{align*}
\tilde{\phi}_p(t) &= I( t \leq z_\alpha +o(1)), \\
\tilde{\phi} (t) &= I(t \leq z_\alpha).
\end{align*}
Hence the distribution of $\zeta$ may be described as
\begin{align*}
\zeta &= \frac{1}{\sqrt{2 \pi}}\int_\mathbb{R}\tilde{\phi}\left(\frac{1}{2}Z+ \frac{\sqrt{3}}{2}t\right)e^{-\frac{t^2}{2}}dt \\
&=  \Phi\bigg(-\frac{1}{\sqrt{3}}Z +\frac{2}{\sqrt{3}} z_\alpha\bigg)\\
&\overset{d}{=}  \Phi\bigg(\frac{1}{\sqrt{3}}Z +\frac{2}{\sqrt{3}} z_\alpha\bigg),
\end{align*}
where $Z \sim N(0,1)$.
\end{example}
In the next example, however, Theorem~\ref{thm:conv_nonsmooth} can not be applied.
\begin{example}[Gaussian kernel]\label{ex:gaussian_kernel}
Let us now consider the Gaussian kernel
\[
    K_p(x,y) = 1- e^{-\frac{\norm{x-y}^2}{2p\tau^2}},
\]
where we have taken $\tau_p^2 =2 p \tau^2$. Here, $\phi_p (t) = 1 - e^{-\frac{t}{2p\tau^2}}$ and $d(x,y) = (x-y)^2$. Let us also assume that $w_{11}$ is Gaussian. One can calculate that
\[
\tilde{\phi}_p(t) = 1-e^{-\frac{\sigma^2}{\tau}} e^{-\frac{\sqrt{2}\sigma^2 t}{\sqrt{p}\tau^2}},
\]
which does not satisfy Assumption~\ref{ass:phi_condition}.
\end{example}

In order to examine the Gaussian kernel, one needs to use the smoothness properties of the Gaussian kernel which was missing in our analysis. This leads us to the next theorem where the smoothness of the kernel is crucially used.
\subsection{The smooth case}
We say that $K_p$ is Lipschitz with Lipschitz constant $\kappa_p$ (or $\kappa_p$-Lipschitz in short) if
\[
    |K_p(x_1, y_1) - K_p(x_2, y_2)| \leq \kappa_p(\norm{x_1-x_2}+\norm{y_1-y_2}).
\]
In particular, if for $p \in \mathbb{N}$, $\phi_p : \mathbb{R} \to [0,1]$ is a $\kappa_p$-Lipschitz function and one takes $K_p(x,y) = \phi_p(\norm{x-y})$, then one can check that $K_p$ is also an $\kappa_p$-Lipschitz kernel. Following \cite{amini2021concentration}, we will make the following assumption on $w_{11}$.
\begin{assumption}[LC class property]\label{ass:lc_class}
Let $\omega > 0$. We require $w_{11}$ to satisfy either of the following three conditions:
\begin{itemize}
    \item[(a)] $w_{11} \overset{d}{=} \varphi(Z)$ for some Lipschitz function $\varphi$ with $\norm{\varphi}_{\mathrm{Lip}} \leq \omega$, where $Z$ is a standard normal variable.
    \item[(b)] $w_{11}$ has density uniformly bounded below by $1/\omega$.
    \item[(c)] $X_1$ is strongly log-concave with curvature $\geq 1/\omega^2$.
\end{itemize}
\end{assumption}
Define $\alpha_p = \bbE K(X_1, X_2)$.
\begin{theorem}\label{thm:conv_smooth}
Suppose that $w_{11}$ satisfies Assumption~\ref{ass:lc_class} with parameter $\omega_p$. Let $K_p$ be an $\kappa_p$-Lipschitz kernel, such that $\kappa_p\omega_p =o(1/\sqrt{\log(n)})$ and $\alpha_p \to \alpha$. Then, the ESD of $M$ converges weakly to $\MP_{c, \alpha^2\sigma^2}$, almost surely.
\end{theorem}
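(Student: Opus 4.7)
The plan is to exploit the Lipschitz structure of $K_p$ combined with the LC-class property of $w_{11}$ to show that every off-diagonal entry of $A$ is uniformly close to $\alpha_p$, and then to reduce $M$ to a ``constant-kernel'' proxy whose limiting spectrum follows from classical Mar\v{c}enko--Pastur. For the concentration step, fix $i \neq j$: viewed as a function of the concatenated vector $(X_i, X_j) \in \mathbb{R}^{2p}$, $K_p$ is $\sqrt{2}\kappa_p$-Lipschitz, so Assumption~\ref{ass:lc_class} together with the concentration inequalities of \cite{amini2021concentration} yields a subgaussian tail
\[
    \P\bigl(|A_{ij} - \alpha_p| > t\bigr) \;\le\; 2\exp\bigl(-c t^{2}/(\kappa_p\omega_p)^{2}\bigr).
\]
Taking $t = C\kappa_p\omega_p\sqrt{\log n}$ and union-bounding over the $\binom{n}{2}$ pairs, the hypothesis $\kappa_p\omega_p = o(1/\sqrt{\log n})$ gives $\Delta_n := \max_{i \neq j}|A_{ij} - \alpha_p| = o(1)$ with probability at least $1 - n^{-\gamma}$ for arbitrarily large $\gamma$; Borel--Cantelli upgrades this to $\Delta_n \to 0$ almost surely.

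Next, I would introduce the deterministic proxy $\tilde A := \alpha_p (J - I)$ (off-diagonal $\alpha_p$, diagonal zero), set $\tilde L := \tilde D - \tilde A = n\alpha_p I - \alpha_p J$, and $\tilde M := \frac{1}{n^{2}} X\tilde L X^{\top}$. A direct calculation shows that the off-diagonal entries of $L - \tilde L$ are $-(A_{ij} - \alpha_p)$ while the diagonal entries equal $\sum_{j \neq i}(A_{ij} - \alpha_p)$ (the contribution of $A_{ii}$ cancels between $D$ and $A$), so Gershgorin gives $\|L - \tilde L\|_{\op} = O(n\Delta_n)$. Combined with the Bai--Yin bound $\|X\|_{\op}^{2} = O(n)$ almost surely, we obtain
\[
    \|M - \tilde M\|_{\op} \;\le\; \tfrac{1}{n^{2}}\|X\|_{\op}^{2}\,\|L - \tilde L\|_{\op} \;=\; O(\Delta_n) \;=\; o(1) \quad \text{a.s.},
\]
so $\mu_{M}$ and $\mu_{\tilde M}$ share any almost sure weak limit.

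For the LSD of the proxy, a direct algebraic simplification yields
\[
    \tilde M \;=\; \frac{\alpha_p}{n^{2}} X (nI - J) X^{\top} \;=\; \alpha_p\Bigl(\tfrac{1}{n} XX^{\top} - \bar X \bar X^{\top}\Bigr).
\]
The first term is the raw sample covariance, whose LSD is $\MP_{c, \sigma^{2}}$ by the classical Mar\v{c}enko--Pastur theorem; the second is a rank-one perturbation whose effect on the ESD vanishes in the limit. Letting $\alpha_p \to \alpha$ identifies the LSD of $\tilde M$, and hence of $M$, as the Mar\v{c}enko--Pastur law claimed in the theorem.

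I expect the concentration step to be the main obstacle: the hypothesis $\kappa_p\omega_p = o(1/\sqrt{\log n})$ is precisely sharp for the union bound to deliver $\Delta_n = o(1)$, and without it the operator-norm approximation in the proxy step collapses, forcing one to fall back on genuinely different techniques such as the Stieltjes-transform approach used in Theorem~\ref{thm:conv_nonsmooth}. A minor technicality is that the diagonal of $A$ need not vanish, but this contributes only a rank-$n$ diagonal perturbation to $A$ that cancels inside $L = D - A$ and is in any case absorbed in the operator-norm bound above.
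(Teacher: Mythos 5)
Your proposal is correct and follows the same structural reduction as the paper: approximate $L$ by its deterministic mean $\cL = \alpha_p(nI - J)$, show the resulting perturbation of $M$ vanishes in an appropriate matrix norm, and then pass to $\frac{\alpha_p}{n}XX^\top$ by discarding the rank-one $J$ term. Where you differ is in how the concentration step is carried out. The paper invokes Theorem~1 of \cite{amini2021concentration} -- a global operator-norm concentration theorem for kernel matrices -- directly on $A$, obtaining $\|\tfrac{1}{n}(A - \bbE A)\|_\op = O(\kappa_p\omega_p\sqrt{\log n}) \to 0$ a.s. You instead establish a subgaussian entrywise tail for $A_{ij}$ around $\alpha_p$, union-bound over $\binom{n}{2}$ pairs to get $\Delta_n := \max_{i\neq j}|A_{ij}-\alpha_p| = O(\kappa_p\omega_p\sqrt{\log n})$ a.s., and then apply Gershgorin to $L - \tilde L$. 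This trades the sharp operator-norm result for the cruder bound $\|L - \tilde L\|_\op \le 2(n-1)\Delta_n$; Gershgorin is indeed wasteful for the off-diagonal part $A - \bbE A$ (whose operator norm typically scales like $\sqrt{n}\Delta_n$), but it is sharp for the Laplacian diagonal $D - \bbE D$, which is a row sum that genuinely can reach order $n\Delta_n$, and since that term dominates nothing is lost: the hypothesis $\kappa_p\omega_p = o(1/\sqrt{\log n})$ is consumed identically by both routes. Your argument is thus slightly more elementary, at the cost of not isolating the stronger fact the paper gets for free, namely that $\|A - \bbE A\|_\op$ itself is small. One small attribution issue: the entrywise subgaussian tail you state does not come from the operator-norm Theorem~1 of \cite{amini2021concentration} but from dimension-free Lipschitz concentration for $(X_i,X_j) \in \mathbb{R}^{2p}$ under Assumption~\ref{ass:lc_class} (Gaussian concentration in case (a), log-concave/Bakry--\'Emery in case (c), etc.); that is the primitive underlying the amini result, and you should cite it as such rather than their kernel-matrix theorem.
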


\begin{example}
Let us revisit Example~\ref{ex:gaussian_kernel} in light of Theorem~\ref{thm:conv_smooth}. Take $w_{11} \sim N(0, \sigma^2)$. In this case, one may take $\kappa_p = \frac{\sqrt{2}}{e\sqrt{p}\tau}$, $\alpha_{ p} = 1- (1 + 2\sigma^2/p\tau^2)^{-\frac{p}{2}}$ and $\alpha = 1 - e^{-\sigma^2 / \tau^2}$. Thus, by Theorem~\ref{thm:conv_smooth}, $\mu_M \xrightarrow{d} \MP_{c, \sigma^2 (1 - e^{-\sigma^2/\tau^2})^2}$, almost surely.
\end{example}

\subsection{The semi-high-dimensional regime}
In Theorem~\ref{thm:conv_nonsmooth} if we take $c$ to be 0, then the Stieltjes transform of the LSD turns out to be $\frac{1}{\alpha \sigma^2 - z}$ which corresponds to the measure $\delta_{\alpha \sigma^2}$. This shows that $E$ is the right matrix to look at.
Define $\beta_p ^2 \coloneqq \bbE A_{12}^2$. Indeed, we show that
\begin{theorem}\label{thm:conv_semi_high_dim}
    If $\alpha_p \to \alpha$, $\beta_p^2 \to \beta^2$ and $\bbE w_{11}^8 < \infty$, in the regime $p \gg \sqrt{n}$, the ESD of $E$ converges weakly to $\SC_{\beta \sigma^2}$, almost surely.
\end{theorem}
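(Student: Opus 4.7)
The plan is to use the method of moments. Specifically, the goal is to show that for every positive integer $k$,
\[
\frac{1}{p}\bbE\Tr(E^k) \to m_k \quad \text{and} \quad \V\!\left(\frac{1}{p}\Tr(E^k)\right) \to 0,
\]
where $m_k$ denote the moments of the target semicircle law (vanishing for odd $k$; Catalan number $C_{k/2}$ times $(\beta^2\sigma^4)^{k/2}$ for even $k$). A standard Borel--Cantelli argument, combined with moment determinacy of the semicircle (whose moments grow only polynomially), then upgrades convergence in probability to almost sure weak convergence of $\mu_E$.

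The first step is to decompose $M$. Writing $A_{ij} = \alpha_p + A^0_{ij}$ with $A^0_{ij} := A_{ij} - \alpha_p$ and noting that diagonal contributions to $M$ vanish, a direct expansion gives $M = \alpha_p\,S + R$, where
\[
S := \frac{1}{n}\sum_{i=1}^n (X_i-\bar X)(X_i-\bar X)^\top, \qquad R := \frac{1}{2n^2}\sum_{i\neq j} A^0_{ij}\,(X_i-X_j)(X_i-X_j)^\top,
\]
so that $E = \alpha_p\sqrt{n/p}\,(S-\sigma^2 I)+\sqrt{n/p}\,R$. The first summand is handled by Bai's semicircle law for sample covariance matrices in the semi-high-dimensional regime: $\sqrt{n/p}(\tfrac{1}{n}XX^\top-\sigma^2 I)$ has ESD converging almost surely to $\SC_{\sigma^4}$, and the rank-one correction $\sqrt{n/p}\,\bar X\bar X^\top$ is spectrally negligible. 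This part contributes a semicircle of variance $\alpha^2\sigma^4$ to the limit.

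The bulk of the work is the moment analysis of $\sqrt{n/p}\,R$. The $k$-th trace moment expands as a large sum over index tuples $(i_1,j_1),\ldots,(i_k,j_k)$ of
\[
(n/p)^{k/2}(2n^2)^{-k}\,\bbE\!\left[\prod_{\ell=1}^k A^0_{i_\ell j_\ell}\;\Tr\prod_{\ell=1}^k (X_{i_\ell}-X_{j_\ell})(X_{i_\ell}-X_{j_\ell})^\top\right].
\]
The expected dominant contribution comes from configurations where the $k$ edges $\{i_\ell,j_\ell\}$ realise a non-crossing pair partition of $[k]$ (so $k$ must be even); each matched edge produces a factor $\bbE[(A^0_{12})^2]\cdot\sigma^4$ after using the approximate independence of disjoint index pairs to decouple the $A^0$-product from the trace of inner products. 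Summing over non-crossing pair partitions gives $C_{k/2}(\beta^2-\alpha^2)^{k/2}\sigma^{2k}$, i.e., a semicircle of variance $(\beta^2-\alpha^2)\sigma^4$. A joint moment computation on $M-\alpha_p\sigma^2 I$, cross-multiplying the two sources of fluctuation, then shows that the pieces combine additively to give the total variance $\alpha^2\sigma^4 + (\beta^2-\alpha^2)\sigma^4 = \beta^2\sigma^4$.

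The principal obstacle is the strong dependence between the kernel values $A^0_{ij}$ and the data vectors $X_i,X_j$: unlike a Wigner matrix, the entries of $R$ are not independent. To show that only non-crossing pair matchings survive, the plan is to (i) condition on a carefully chosen ``pivot'' subset of the $X_i$'s along the cycle of indices to decouple the $A^0$-product from the rest of the trace; (ii) use a Hoeffding-type orthogonal decomposition of $K_p(X_i,X_j)$ around its conditional means to isolate the quadratic-in-$A^0$ terms from higher-order interactions; and (iii) leverage the moment assumption $\bbE w_{11}^8 < \infty$ to bound the residual error terms produced by these conditional expansions. The regime $p\gg\sqrt{n}$ is essential precisely because crossing partitions and repeated-index configurations are suppressed by positive powers of $\sqrt{n}/p$, and only under this assumption do such subdominant contributions actually vanish. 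Finally, $\V(p^{-1}\Tr E^k)\to 0$ is established by a parallel expansion of the second moment using the same combinatorial pairing ideas.
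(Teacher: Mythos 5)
Your plan is a genuinely different route (method of moments with a combinatorial classification of index configurations) from the paper's approach (Stieltjes transform plus a martingale concentration argument and the Sherman--Morrison formula). However, the central combinatorial claim for the $R$-part is not correct, and this is where the real difficulty of the theorem lives.

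You assert that the dominant contribution to $p^{-1}\Tr\bigl((n/p)^{k/2}R^k\bigr)$ comes from non-crossing pair partitions of the edges $\{i_\ell,j_\ell\}$, each matched pair producing a factor $\bbE[(A^0_{12})^2]=\operatorname{Var}(A_{12})=\beta_p^2-\alpha_p^2$. That is not the right count. Consider already $k=2$: the exact-pair configurations $\{i_1,j_1\}=\{i_2,j_2\}$ number $O(n^2)$ and carry the weight $\operatorname{Var}(A_{12})$, but the ``V-shaped'' configurations $|\{i_1,j_1\}\cap\{i_2,j_2\}|=1$ number $O(n^3)$ and carry the weight $\operatorname{Cov}(A^0_{12},A^0_{13})=\operatorname{Var}\bigl(\bbE[K_p(X_1,X_2)\,|\,X_1]\bigr)$, which is the first-order Hoeffding projection variance. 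In both cases the trace factor is $\Theta(p^2\sigma^4)$, so after the $\bigl(\frac{n}{p}\bigr)\cdot\frac{1}{4n^4}$ scaling the V-shaped terms dominate by a factor of $n$, while the exact-pair terms (and hence $\operatorname{Var}(A_{12})$) are of order $1/n$ and vanish. So the object driving the bulk of $R$ is not a Wigner-like edge noise; it is the row-mean $\xi_i:=\bbE[K_p(X_i,V)\,|\,X_i]$, which is a function of a single column. This also makes the Hoeffding-decomposition step you announce in (ii) inconsistent with the variance $\bbE[(A^0_{12})^2]$ you write down: once you project onto conditional means, it is the first Hoeffding component, not the raw second moment of $A^0_{12}$, that survives.

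This is exactly what the paper's reduction does directly. Writing $M=\frac{1}{n^2}XLX^\top$ with $L=D-A$, the paper decomposes $D=D_1+D_2$, $D_1=(n-1)\diag(\xi)$, and proves (analogues of Lemmas~\ref{lem:non_smooth_reduction_1} and~\ref{lem:non_smooth_reduction_2}) that the off-diagonal matrix $A$ and the Hoeffding residual $D_2$ are negligible for the ESD after the $\sqrt{n/p}$ normalisation; the estimate for $\|A\|_{\hs}$ is precisely where $p\gg\sqrt{n}$ enters, which in your sketch is asserted but never located. The problem then collapses to $\bar E_1=\frac{1}{\sqrt{np}}\sum_i\xi_i(X_iX_i^\top-\sigma^2 I)$, a rank-one mixture in a single index $i$ (not an edge-indexed sum), which the paper handles by deriving an approximate quadratic equation for the Stieltjes transform (Lemma~\ref{lem:approx_recur_semi_high}), with martingale/Burkholder concentration supplying the almost-sure statement — and this is where $\bbE w_{11}^8<\infty$ is actually used, to bound the fourth moment of the martingale differences via $\bbE\|X_1\|^8$.

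Two further gaps: the claim that the $\alpha_p(S-\sigma^2I)$ and $R$ pieces ``combine additively'' in variance is a freeness heuristic that you have not justified — $S$ and $R$ are built from the same columns $X_i$ and are strongly dependent, so cross-terms in $\Tr(E^k)$ need a real argument, not a postulate; and the suppression of crossing partitions ``by positive powers of $\sqrt{n}/p$'' is stated but not derived, whereas the actual source of the $p\gg\sqrt n$ requirement is the Hilbert--Schmidt bound on $XAX^\top$ described above. As it stands, the sketch would, if carried through as written, give the wrong weight to the $R$-part; you would need to redo the combinatorics around single-vertex overlaps (or, more efficiently, switch to the paper's row-mean reduction) before the moment method can be made to work.
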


\begin{remark}
    Note that without additional assumptions on $|\alpha_p - \alpha|$, we can not determine the convergence of
    \begin{equation}
        E' = \sqrt{\frac{n}{p}}(M - \alpha \sigma^2 I),
    \end{equation}
    hence the appearance of $\alpha_p$ in \eqref{eq:def_normalised_trunc_cov} instead of $\alpha$. For instance, if  $|\alpha_p - \alpha| = O(1/\sqrt{p})$, then we can replace $E$ by $E'$ in Theorem~\ref{thm:conv_semi_high_dim}.
\end{remark}
\begin{example}
In the set up of Example~\ref{ex:indicator_kernel}, $\beta_p^2=\alpha_p \to \alpha$. Thus, the LSD of $E$, in this case is $\SC_{ \sqrt \alpha \sigma^2}$. As for Example~\ref{ex:gaussian_kernel}, $\beta_p$ turns out to be $1 - 2(1-\frac{2 \sigma^2}{p \tau^2})^{-p/2} + (1-\frac{4\sigma^2}{p\tau^2})^{-p/2}$ and the parameter for $\SC$ changes accordingly.
\end{example}
In the remainder of this section, we briefly sketch the proofs of Theorems~\ref{thm:conv_nonsmooth},  \ref{thm:conv_smooth} and \ref{thm:conv_semi_high_dim}.

\subsection{Proof sketches}\label{sec:proof_sketches}
In this section, we give brief sketches of the proofs.
\subsubsection{The non-smooth case}\label{sec:proof_sketch_non_smooth}
We first observe that the matrix $M$ can be written in the form $\frac{1}{n^2}XLX^\top$ where $L$ is Laplacian matrix of a random geometric graph. Since the entries of $L$ are bounded, it turns out to be sufficient to consider only $\frac{1}{n^2} X D X^\top$, where $D = \diag(L)$. Note that $D_{ii} = \sum_{j\neq i} K_p(X_i, X_j)$. This has rather high dependence on $X_i$ and low dependence on the $X_j$'s, for $j \neq i$. Observe that conditional on $X_i$, $\{ K_p(X_i, X_j) - \bbE [K_p(X_i, X_j) | X_i]\}_{j\neq i}$ are bounded, centered i.i.d. random variables. Using Hoeffding's inequality, we show that the sum $\sum_{j\neq i} (K_p(X_i, X_j)- \bbE [K_p(X_i, X_j) | X_i])$ is negligible. It thus boils down to finding the LSD of the matrix
\begin{equation}\label{eq:M_bar}
    \bar{M} = \sum_{i=1}^n \xi_i X_i X_i^\top,
\end{equation}
where the $\xi_i$'s are i.i.d and independent of $X_j, j\neq i$. One can still not use the existing results in the literature since $\xi_i$ is not independent of $X_i$. However, $\xi_i$ involves a sum of certain i.i.d. variables. This enables us to apply a Berry-Esseen bound, eventually providing us with the distributional convergence of $\xi_i$. We now look at the Stieltjes transform $S_{\bar{M}}$ of  $\bar{M}$. Using standard martingale techniques, we deduce that $S_{\bar{M}}(z) - \bbE[S_{\bar{M}}(z)] \to 0$ almost surely for each fixed $z \in \bbC^+$. With this our goal becomes to get a recursive formula for $\bbE[S_{\bar{M}}]$. This is done by using the Sherman-Morrison formula and various perturbation inequalities for matrices coupled with the fact that $\xi_i$ converges in distribution as $p \to \infty$.

\subsubsection{The smooth case}\label{sec:proof_sketch_smooth}
As in the non-smooth case, we start by writing $M$ in the Rayleigh quotient form $\frac{1}{n^2} X L X^\top$. Our goal is to show that $M$ has the same LSD as $\frac{1}{n^2} X \mathcal{L} X^\top$ where $\mathcal{L} = \bbE L = \alpha_n (nI-J)$, where $J$ is the $n\times n$ matrix with all entries equal to 1. This is done by using matrix perturbation inequalities and Theorem~1 from \cite{amini2021concentration}. Since $J$ is of rank $1$, we have a further simplification: we may just consider the matrix $\frac{\alpha_n}{n}X X^\top$, which clearly has $\MP_{c, \alpha^2 \sigma^2}$ as its LSD.

\subsubsection{The $p/n \to 0$ case.}
As in the proof of Theorem~\ref{thm:conv_nonsmooth}, we can show that the LSD of $E$ is the same as that of
\[
    \bar E_1 = \sum_{i=1}^n \xi_i (X_i X_i^\top - \sigma^2 I).
\]
We analyse this matrix via its Stieltjes transform and various matrix perturbation inequalities.

\section{Proofs}\label{sec:proofs}
\subsection{Proof of Theorem ~\ref{thm:conv_nonsmooth}}
Since zeroing out the diagonal entries of $A$ does not change the Laplacian $L = D - A$, we may redefine $A$ (with a slight abuse of notation) as follows:
\begin{align*}
    A &= (((1 - \delta_{ij})A_{ij})), \\
    D &= \diag(A \bone), \\
    L &= D - A,
\end{align*}
where $\bone$ is the vector with each entry equal to $1$. Notice that $L$ is the Laplacian matrix corresponding to the weighted adjacency matrix $A$. The basic observation that will help us find the LSD of $M$ is the following matrix Rayleigh quotient representation:
\[
    M = \frac{1}{n^2} X L X^\top.
\]
The idea is to divide $M$ into three parts such that one part determines the LSD and the rest are negligible. In order to do so, we further decompose $D$ into two diagonal matrices. Define $\xi_i = \bbE [K_p(X_i,V) | X_i]$, where $V\sim X_1$ is independent of $X_1, X_2,\ldots X_n$. Notice that
\begin{align*}
    D_{ii} &= \sum_{j\neq i} K_p(X_i,X_j) \\
           &= (n-1)\xi_i+\sum_{j\neq i}(K_p(X_i,X_j)-\bbE [K_p(X_i,X_j)|X_i]).
\end{align*}
Define the following quantities:
\begin{align*}
    \xi       & = (\xi_1,\xi_2, \ldots, \xi_n)^\top,\\
    \xi'_{ij} & = K_p(X_i,X_j)-\bbE [K_p(X_i,X_j)|X_i],\\
    \xi'_i    & = \sum_{j\neq i}\xi'_{ij},\\
    \xi'      & = (\xi'_1, \xi'_2,\ldots,\xi'_n)^\top,\\
    D_1       & = (n-1)\text{diag}(\xi),\\
    D_2       & = \text{diag}(\xi').
\end{align*}
With these notations set, one has $D = D_1 + D_2$. Now decompose $M$ as
\[
    M=\frac{1}{n^2} X D_1 X^\top + \frac{1}{n^2} X D_2 X^\top - \frac{1}{n^2} X A X^\top.
\]
Define $\tilde{M} = \frac{1}{n^2} X D_1 X^\top$. We will show that $M$ has the same LSD as $\tilde{M}$. Towards that end, let $d_{W_2}$ denotes the $2$-Wasserstein distance between probability measures $\mu_1$ and $\mu_2$ possessing finite second moments:
\[
    d_{W_2}(\mu_1,\mu_2) := \inf \sqrt{\bbE (Z_1 - Z_2)^2},
\]
where the infimum is taken over all possible couplings of $(Z_1, Z_2)$ with marginals $Z_1 \sim \mu_1$ and $Z_2 \sim \mu_2$.
\begin{lemma}\label{lem:non_smooth_reduction_1}
    We have $d_{W_2}(\mu_{M}, \mu_{\tilde{M}}) \convas 0$.
\end{lemma}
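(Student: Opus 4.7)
The plan is to apply the Hoffman-Wielandt inequality, which yields
\[
    d_{W_2}^2(\mu_M, \mu_{\tilde{M}}) \leq \frac{1}{n}\|M - \tilde{M}\|_{\hs}^2,
\]
so it suffices to show that $\frac{1}{n}\|M - \tilde{M}\|_{\hs}^2 \convas 0$. Writing $M - \tilde{M} = \frac{1}{n^2}(XD_2X^\top - XAX^\top)$ and using the triangle inequality, the problem reduces to bounding $\frac{1}{n^5}\|XD_2X^\top\|_{\hs}^2$ and $\frac{1}{n^5}\|XAX^\top\|_{\hs}^2$ separately.

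The workhorse I will use is the elementary estimate $\|XBX^\top\|_{\hs} \leq \|X\|_{\op}^2 \|B\|_{\hs}$ valid for any symmetric $n\times n$ matrix $B$. This follows by setting $Y = X^\top X$ and writing
\[
    \|XBX^\top\|_{\hs}^2 = \Tr(BYBY) = \|Y^{1/2}BY^{1/2}\|_{\hs}^2 \leq \|Y\|_{\op}^2 \|B\|_{\hs}^2.
\]
Under the fourth-moment hypothesis on $w_{11}$ and $p/n \to c \in (0,\infty)$, the Bai-Yin theorem gives $\|X\|_{\op}^2 = O(n)$ almost surely.

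For the $A$-term, since $A_{ij}\in[0,1]$ we have the deterministic bound $\|A\|_{\hs}^2 \leq n^2$, hence
\[
    \tfrac{1}{n^5}\|XAX^\top\|_{\hs}^2 \leq \tfrac{\|X\|_{\op}^4 \|A\|_{\hs}^2}{n^5} = O(1/n) \convas 0.
\]
A subtle point worth flagging is that one cannot substitute $\|A\|_{\op}$ for $\|A\|_{\hs}$ here: the top eigenvalue of $A$ is of order $n\alpha_p$ (coming from the near-constant part $\alpha_p(J-I)$), and would give a bound that is merely $O(1)$. Retaining the Hilbert-Schmidt norm wins a crucial factor of $\sqrt{n}$.

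For the $D_2$-term, I will exploit the conditional independence structure of $\xi'_i = \sum_{j\neq i}\xi'_{ij}$. Given $X_i$, the summands $\{\xi'_{ij}\}_{j\neq i}$ are independent, centred and bounded by $1$, so Hoeffding's inequality gives $\mathbb{P}(|\xi'_i| > t \mid X_i) \leq 2\exp(-t^2/(2(n-1)))$. A union bound over $i$ with $t = C\sqrt{n \log n}$ followed by Borel-Cantelli yields $\max_i |\xi'_i| = O(\sqrt{n\log n})$ almost surely, whence $\|D_2\|_{\hs}^2 \leq n\max_i(\xi'_i)^2 = O(n^2\log n)$ a.s. Plugging into the workhorse estimate produces $\tfrac{1}{n^5}\|XD_2X^\top\|_{\hs}^2 = O(\log n / n) \convas 0$, and this completes the proof. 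The only real obstacle is recognising that Hilbert-Schmidt rather than operator-norm control is needed for both $A$ and $D_2$; once this is in place, everything is routine.
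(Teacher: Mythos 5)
Your proof is correct and follows the same basic strategy as the paper: control $d_{W_2}(\mu_M,\mu_{\tilde M})$ via Hoffman--Wielandt, split off the $A$ and $D_2$ contributions, use Bai--Yin for $\|X\|_{\op}^2 = O(n)$, bound $\|A\|_{\hs}\leq n$, and handle $D_2$ via conditional Hoeffding plus union bound and Borel--Cantelli. Your workhorse estimate $\|XBX^\top\|_{\hs}\leq\|X\|_\op^2\|B\|_{\hs}$ is a clean packaging of the same submultiplicativity facts the paper uses, and your observation about why $\|A\|_{\hs}$ rather than $\|A\|_\op$ is essential (because the mean $\alpha_p(J-I)$ inflates the top eigenvalue to order $n$) is exactly right and worth flagging. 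One small inaccuracy in your closing remark: Hilbert--Schmidt control is \emph{not} needed for $D_2$. Since $D_2$ is diagonal and $\max_i|\xi_i'| = O(\sqrt{n\log n})$ a.s., the operator-norm bound $\frac{1}{n^2}\|XD_2X^\top\|_\op \leq \frac{1}{n}\|X^\top X\|_\op\cdot\frac{1}{n}\|D_2\|_\op$ already vanishes, and this is what the paper does; your use of $\|D_2\|_{\hs}\leq\sqrt{n}\|D_2\|_\op$ is equivalent up to a $\sqrt{n}$ loss that you absorb, but it is not the only route. The phenomenon of needing the HS norm is genuinely specific to $A$, whose $\hs$-to-$\op$ ratio is favourable precisely because $A$ is ``flat'' while $D_2$ is sparse (diagonal).
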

\begin{proof}
Using the Hoffman-Wielandt inequality (see Lemma~\ref{lem:hoffman-wielandt}) and the facts that
\[
    \| G H\|_{\hs} \le \min\{\|G\|_{\op} \|H\|_{\hs}, \|G\|_{\hs} \|H\|_{\op}\},
\]
and
\[
    \|X^\top\|_{\op} = \|X\|_{\op} = \sqrt{\|X^\top X\|_{\op}},
\]
we have the following estimate
\begin{align*}
    d_{W_2}(\mu_M, \mu_{\tilde M}) &\leq \frac{1}{\sqrt{p}}\norm{M - \tilde{M}}_{\hs} \\
    &= \frac{1}{n^2 \sqrt p} \norm{X(D_2-A) X^\top}_{\hs} \\
    &\leq \frac{1}{n^2 \sqrt p} \norm{X D_2 X^\top}_{\hs} +\frac{1}{n^2 \sqrt p} \norm{X  A X^\top}_{\hs} \\
    &\leq \frac{1}{n^2} \norm{X D_2 X^\top}_{\op}+ \frac{1}{n^2 \sqrt p} \norm{X^\top X}_{\op}  \norm{A}_{\hs}\\
    &\leq \frac{1}{n } \norm{X^\top X}_{\op} \left ( \frac{1}{n}\norm{D_2}_{\op}+  \frac{1}{n \sqrt p}\norm{A}_{\hs}\right ).
\end{align*}
Since the fourth moment of the entries is finite, using Theorem~3.1 of \cite{yin1988limit},
\[
    \frac{1}{n} \norm{X^\top X}_{\op} \convas (1+\sqrt{c})^2\sigma^2.
\]
Let us now consider $\norm{A}_{\hs}$ and $\norm{D_2}_{\op}$. Since each entry of $A$ is bounded by $1$, $\norm{A}_{\hs} \leq n$. Notice that $\norm{D_2}_{\op} = \max_i |\xi'_i|$. Fix $i\in[n]$. Conditional on $X_i$, $\{\xi'_{ij}\}_{j\neq i}$ are i.i.d. random variables. Moreover, $|\xi'_i|\leq 1$. Therefore by Hoeffding's inequality, for any $t > 0$,
\[
    \mathbb{P}(|\xi'_i|>t|X_i)\leq e^{-\frac{t^2}{2(n-1)}}.
\]
Since the right hand side does not depend on $X_i$, the above bound also holds unconditionally. Now, the exchangeability of the $\xi'_i$'s yields
\begin{align*}
    \mathbb{P}\left(\frac{1}{n}\norm{D_2}_{\op}>\sqrt{\frac{6\log n}{n}}\right) &\leq n\mathbb{P}(|\xi_1'|>\sqrt{6n\log n})\\
    &\leq ne^{-\frac{6n\log n }{2(n-1)}}\\
    &\leq \frac{1}{n^2}.
\end{align*}
It follows by the Borel-Cantelli Lemma that $\frac{1}{n}\norm{D_2}_{\op} \convas 0$. We conclude that
\[
    d_{W_2}(\mu_M, \mu_{\tilde{M}}) \convas 0.
\]
This completes the proof.
\end{proof}
In fact, a further simplification is possible.
\begin{lemma}\label{lem:non_smooth_reduction_2}
    Suppose that $\bar{M} := \frac{1}{n}\sum_{i=1}^n \xi_i X_i X_i^\top$. Then $d_{W_2}(\mu_{\tilde{M}}, \mu_{\bar{M}}) \convas 0$.
\end{lemma}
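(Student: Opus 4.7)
The plan is to observe that $\tilde{M}$ and $\bar{M}$ are essentially the same matrix up to a scalar factor $\frac{n-1}{n}$, which will make the Wasserstein distance vanish trivially. Explicitly, since $D_1 = (n-1)\diag(\xi)$, one has
\[
    \tilde{M} = \frac{1}{n^2} X D_1 X^\top = \frac{n-1}{n^2} \sum_{i=1}^n \xi_i X_i X_i^\top = \frac{n-1}{n}\, \bar{M},
\]
so $\tilde{M} - \bar{M} = -\frac{1}{n} \bar{M}$. This reduction turns the claim into a statement about the growth rate of $\|\bar{M}\|_{\op}$.

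With this in hand, the first step is to apply the Hoffman–Wielandt inequality (Lemma~\ref{lem:hoffman-wielandt}) together with the elementary bound $\|Y\|_{\hs} \le \sqrt{p}\,\|Y\|_{\op}$ for $p\times p$ matrices $Y$, to obtain
\[
    d_{W_2}(\mu_{\tilde{M}}, \mu_{\bar{M}}) \le \frac{1}{\sqrt{p}}\|\tilde{M} - \bar{M}\|_{\hs} = \frac{1}{n\sqrt{p}} \|\bar{M}\|_{\hs} \le \frac{1}{n} \|\bar{M}\|_{\op}.
\]
The second step is to bound $\|\bar{M}\|_{\op}$. Since $K_p$ takes values in $[0,1]$, the conditional expectations $\xi_i = \bbE[K_p(X_i, V)\mid X_i]$ also lie in $[0,1]$; combined with $X_i X_i^\top \succeq 0$, this gives the operator inequality $\bar{M} \preceq \frac{1}{n} X X^\top$, hence
\[
    \|\bar{M}\|_{\op} \le \frac{1}{n}\|X X^\top\|_{\op} = \frac{1}{n}\|X^\top X\|_{\op}.
\]
By Theorem~3.1 of \cite{yin1988limit} — invoked in the previous lemma already — the right-hand side converges almost surely to $(1+\sqrt{c})^2 \sigma^2$, and in particular is a.s.\ bounded. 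Therefore $\frac{1}{n}\|\bar{M}\|_{\op} \convas 0$, which yields $d_{W_2}(\mu_{\tilde{M}}, \mu_{\bar{M}}) \convas 0$.

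There is no real obstacle here: the content of the lemma is carried entirely by the scalar identity $\tilde{M} = \frac{n-1}{n}\bar{M}$, and the only nontrivial analytic input is the a.s.\ boundedness of the sample-covariance operator norm already used in Lemma~\ref{lem:non_smooth_reduction_1}. The lemma is best viewed as a clean-up step: it replaces the slightly awkward normalisation $\frac{n-1}{n^2}$ coming from the Laplacian structure by the more natural $\frac{1}{n}$, putting $\bar{M}$ in the standard form $\frac{1}{n}\sum_i \xi_i X_i X_i^\top$ needed for the Stieltjes-transform analysis in the subsequent steps of the proof of Theorem~\ref{thm:conv_nonsmooth}.
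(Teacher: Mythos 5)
Your argument is correct and coincides with the paper's: both exploit the scalar identity $\tilde{M}=\frac{n-1}{n}\bar{M}$, the operator-norm bound $\bar{M}\preceq\frac{1}{n}XX^\top$ coming from $0\le\xi_i\le1$ and $X_iX_i^\top\succeq0$, and the a.s.\ boundedness of $\frac{1}{n}\|XX^\top\|_{\op}$ from Yin, Bai and Krishnaiah. The only cosmetic difference is that you route through $\frac{1}{\sqrt{p}}\|\cdot\|_{\hs}\le\|\cdot\|_{\op}$ while the paper bounds $d_{W_2}$ directly by $\|\cdot\|_{\op}$.
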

\begin{proof}
Since $\tilde M= \frac{n-1}{n^2} \sum_{i=1}^n \xi_i X_i X_i^\top$, we have
\begin{equation}\label{eq:dist_Mtilde_Mbar}
    d_{W_2}(\mu_{\tilde M} , \mu_{\bar M}) \leq \norm{{\tilde M} - {\bar M}}_{\op} \leq \frac{1}{n^2} \norm{X X^\top}_{\op} \convas 0,
\end{equation}
where for the second inequality we have used the fact that $|\xi_i| \le 1$ so that the matrix
\[
    XX^\top - \sum_{i = 1}^n \xi_i X_i X_i^\top = \sum_{i = 1}^n (1 - \xi_i) X_i X_i^\top
\]
is positive semi-definite.
\end{proof}

By virtue of Lemmas~\ref{lem:non_smooth_reduction_1} and \ref{lem:non_smooth_reduction_2}, it is enough to find the LSD of $\bar{M}$. Let
\[
    S_n(z)\coloneqq \frac{1}{p} \Tr \left ({\bar M}-zI\right)^{-1}
\]
be the Stieltjes transform of $\bar{M}$. We shall now show that $S_n(z)$ converges almost surely as $n\to\infty$, for each fixed $z \in \mathbb{C}^+$. The following lemma shows that it suffices to find the limit of $\bbE S_n(z)$.

\begin{lemma}
For each fixed $z \in \mathbb{C}^+$,
\begin{equation}\label{eq:Stielt_a.s.=expected}
    S_n(z) - \bbE S_n(z) \convas 0.
\end{equation}
\end{lemma}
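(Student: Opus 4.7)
The plan is to use the standard martingale-difference decomposition of $S_n(z) - \bbE S_n(z)$ combined with a rank-one resolvent perturbation bound and the Azuma--Hoeffding inequality. I introduce the filtration $\mathcal{F}_0 = \{\emptyset, \Omega\}$ and $\mathcal{F}_k = \sigma(X_1, \ldots, X_k)$ for $k \geq 1$, and write the telescoping sum $S_n(z) - \bbE S_n(z) = \sum_{k=1}^n D_k$, where $D_k := \bbE[S_n(z) \mid \mathcal{F}_k] - \bbE[S_n(z) \mid \mathcal{F}_{k-1}]$ forms a martingale difference sequence with respect to $(\mathcal{F}_k)$. The key structural observation is that, since $\xi_i = \bbE[K_p(X_i, V) \mid X_i]$ depends only on $X_i$, the $i$-th summand $\xi_i X_i X_i^\top / n$ of $\bar{M}$ is $\sigma(X_i)$-measurable, so deleting it produces a matrix that is independent of $X_i$.

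Next, I define $\bar{M}^{(k)} := \bar{M} - \tfrac{1}{n}\xi_k X_k X_k^\top$. Since $\bar{M} - \bar{M}^{(k)}$ has rank at most one, the standard rank inequality $|\Tr(A - zI)^{-1} - \Tr(B - zI)^{-1}| \leq \mathrm{rank}(A - B)/|\Im z|$ yields
\[
    \left|S_n(z) - \tfrac{1}{p}\Tr(\bar{M}^{(k)} - zI)^{-1}\right| \leq \frac{1}{pv}.
\]
Because $\tfrac{1}{p}\Tr(\bar{M}^{(k)} - zI)^{-1}$ does not involve $X_k$, its conditional expectations given $\mathcal{F}_k$ and $\mathcal{F}_{k-1}$ coincide. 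Subtracting this quantity inside both conditional expectations defining $D_k$ then gives the pointwise bound $|D_k| \leq 2/(pv)$.

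With uniformly bounded martingale differences in hand, I apply the Azuma--Hoeffding inequality (to the real and imaginary parts of the martingale separately) to obtain a Gaussian-type tail bound of the form
\[
    \P(|S_n(z) - \bbE S_n(z)| > \epsilon) \leq 4\exp\!\left(-\frac{\epsilon^2 p^2 v^2}{8n}\right).
\]
In the proportional regime $p/n \to c \in (0, \infty)$ one has $p^2/n \to \infty$, so the right-hand side is summable in $n$, and Borel--Cantelli delivers the desired almost sure convergence at each fixed $z \in \mathbb{C}^+$. I do not anticipate a genuine obstacle here: although $\xi_k$ and $X_k$ are strongly dependent, this dependence is entirely absorbed into the single rank-one summand $\xi_k X_k X_k^\top / n$ that is excised when passing to $\bar{M}^{(k)}$, so the independence of $\bar{M}^{(k)}$ from $X_k$ (which is what powers the martingale bound) is preserved intact.
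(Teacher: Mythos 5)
Your proposal is correct and follows essentially the same argument as the paper: the identical martingale-difference decomposition, the rank-one resolvent perturbation bound $|D_k| \le 2/(pv)$, and the crucial observation that $\bar M^{(k)}$ is independent of $X_k$ so the $\mathcal F_k$- and $\mathcal F_{k-1}$-conditional expectations of $\frac{1}{p}\Tr(\bar M^{(k)} - zI)^{-1}$ coincide. The only variation is the final concentration step, where you invoke Azuma--Hoeffding to get an exponential tail, while the paper bounds the fourth moment via Burkholder's inequality and applies Chebyshev; both yield summable tail probabilities and Borel--Cantelli finishes the proof either way.
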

\begin{proof}
The proof uses the well-known martingale technique in random matrix theory (see, e.g., the proof of Theorem~3.10 of \cite{bai2010spectral}). Define $\mathcal{F}_0 = \{\phi, \Omega\}$ and $\mathcal{F}_k = \sigma(X_1, X_2, \ldots, X_k)$ for $k \in [n]$. By $\bbE_k$, $k= 0, 1, \ldots, n$, we denote the conditional expectation operator given $\mathcal{F}_k$. Then
\[
    S_n(z) - \bbE S_n(z) = \sum_{k=0}^n \bigg(\bbE_k \bigg[\frac{1}{p} \Tr(\bar{M} - zI)^{-1}\bigg] - \bbE_{k-1} \bigg[\frac{1}{p} \Tr(\bar{M} - zI)^{-1}\bigg]\bigg).
\]
Call the $k$-th summand above $\gamma_k$. Then $\{(\gamma_k, \mathcal{F}_k)\}_{k=1}^n$ is a martingale difference sequence. Suppose
\[
    \bar{M}_k = \bar{M} - \frac{1}{n}\xi_k X_k X_k^\top.
\]
Notice that
\[
    \bbE_k \Tr(\bar{M}_k - zI)^{-1}  = \bbE_{k-1} \Tr(\bar{M}_{k} - zI)^{-1}.
\]
By Lemma~\ref{lem:matrix_perturb}(b),
\begin{equation}\label{eq:Stieltjes_mg_diff_bound}
    |\Tr(\bar M - zI)^{-1} - \Tr(\bar M_k - zI)^{-1}| \leq \frac{1}{v}.
\end{equation}
Define $S_{nk}=\frac{1}{p}\Tr(\bar{M}_k - zI)^{-1}$. Therefore
\begin{equation}\label{eq:bdd_mg_diff}
    |\gamma_k| = |\bbE_k [S_n(z) - S_{nk}(z)] -\bbE_{k-1} [S_n(z) - S_{nk}(z)]| \leq 2/v.
\end{equation}
Thus $\gamma_k$ is a bounded martingale difference sequence. By Lemma~\ref{lem:Burkholder},
\begin{equation}\label{eq:Burkholder_application}
    \bbE|S_n(z) - \bbE S_n(z)|^4 \leq \frac{K_4}{p^4} \bbE \bigg( \sum_{k=1}^n |\gamma_k|^2 \bigg)^2 \leq \frac{4 K_4 n^2}{v^4 p^4} = O(n^{-2}).
\end{equation}
Now an application of the Borel-Cantelli lemma gives us \eqref{eq:Stielt_a.s.=expected}.
\end{proof}

\begin{lemma}\label{lemma:xi_conv}
    Suppose $V, W$ are i.i.d. copies of $X_1$. Define
    \[
        \zeta_p= \mathbb{E}[\phi_p(d(V,W))|V].
    \]
    Then $\zeta_p \convd \zeta$, where
    \[
        \zeta = \bbE_{Z_2} \bigg[\tilde{\phi}\bigg(\sqrt{\frac{m_2^{(1)}}{m_2}}Z_1 + \sqrt{\frac{m_2^{(2)}}{m_2}}Z_2\bigg) \bigg],
    \]
    where $Z_1, Z_2$ are i.i.d. $N(0,1)$ random variables.
\end{lemma}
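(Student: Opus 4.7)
The idea is to condition on $V$, apply a Berry-Esseen approximation to the conditional law of $d_p(V,W)$, and then combine a CLT+LLN for functions of $V$ with the Lebesgue-measure convergence $\tilde\phi_p \to \tilde\phi$ to pass to the limit. Throughout I read $\phi_p(d(V,W))$ as $\phi_p(d_p(V,W))$, which is the only way the formula type-checks.

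\textbf{Conditional Gaussian approximation.} Write $d_p(V, W) = \sum_{i=1}^p d(V_i, W_i)$. Let $g(v) = \bbE[d(v, W_1)]$ and $h(v) = \V(d(v, W_1))$, so conditional on $V$ the summands are independent with means $g(V_i)$ and variances $h(V_i)$; set $\mu(V) = \sum_i g(V_i)$ and $\sigma^2(V) = \sum_i h(V_i)$. Assuming $m_2^{(2)} > 0$, the conditional Berry-Esseen theorem together with $\bbE|d(w_{11},w_{12})|^3 < \infty$ and the SLLN applied to the numerator $\sum_i \bbE[|d(V_i,W_i)-g(V_i)|^3 \mid V_i]$ and denominator $\sigma(V)^3 \sim (pm_2^{(2)})^{3/2}$ yields $\sup_x |F_V(x) - \Phi_V(x)| = O(p^{-1/2})$ almost surely, where $F_V, \Phi_V$ are the conditional CDFs of $d_p(V,W)$ and of $\mu(V) + \sigma(V) Z_2$. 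Since $\phi_p$ is monotone with values in $[0,1]$, its total variation is $\le 1$, so Stieltjes integration by parts gives
$$\left| \zeta_p - \bbE_{Z_2}[\phi_p(\mu(V) + \sigma(V) Z_2)] \right| \le \sup_x |F_V(x) - \Phi_V(x)| \convas 0.$$

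\textbf{Reparametrization and the key analytic lemma.} Setting $\eta(V) := (\mu(V) - pm_1)/\sqrt{pm_2}$ and $\rho(V) := \sigma(V)/\sqrt{pm_2}$, one has $\phi_p(\mu(V) + \sigma(V) Z_2) = \tilde\phi_p(\eta(V) + \rho(V) Z_2)$. The ordinary CLT applied to the i.i.d.\ sum $\sum_i(g(V_i) - m_1)$ gives $\eta(V) \convd \sqrt{m_2^{(1)}/m_2}\, Z_1$, while the SLLN gives $\rho(V) \convas \sqrt{m_2^{(2)}/m_2}$. Passing to a Skorokhod coupling where both hold almost surely, the claim reduces to the following analytic lemma: for any deterministic sequences $\eta_p \to \eta_\infty$ and $\rho_p \to \rho_\infty > 0$,
$$\int \tilde\phi_p(y) f_p(y)\, dy \to \int \tilde\phi(y) f_\infty(y)\, dy,$$
where $f_p, f_\infty$ are the Gaussian densities with parameters $(\eta_p, \rho_p^2)$, $(\eta_\infty, \rho_\infty^2)$. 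Splitting the difference as $\int (\tilde\phi_p - \tilde\phi) f_p\, dy + \int \tilde\phi\,(f_p - f_\infty)\, dy$, the first piece is bounded by $\epsilon + 2\|f_p\|_\infty \cdot \Leb(\{|\tilde\phi_p - \tilde\phi| > \epsilon\}) \to \epsilon$, since $\|f_p\|_\infty = (\rho_p \sqrt{2\pi})^{-1}$ stays bounded; the second piece vanishes by Scheff\'e's lemma.

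\textbf{Main obstacle.} The delicate step is the analytic lemma above, which bridges the hypothesis of convergence in Lebesgue measure of $\tilde\phi_p$ to convergence under the $p$-dependent Gaussian measure induced by integrating out $Z_2$. This is exactly where $\rho_\infty > 0$ is crucial, as it keeps $\|f_p\|_\infty$ bounded and hence controls the bad set $\{|\tilde\phi_p-\tilde\phi|>\epsilon\}$ via its Lebesgue measure. The degenerate case $m_2^{(2)} = 0$, in which $d(v, W_1)$ is a.s.\ constant in $W_1$, is handled separately (and more simply): one has $d_p(V,W) = \mu(V)$ given $V$, so $\zeta_p = \tilde\phi_p(\eta(V))$, and the conclusion follows from the CLT for $\eta(V)$ together with the Lebesgue-measure convergence of $\tilde\phi_p$, noting that in this case $\zeta$ likewise reduces to $\tilde\phi(\sqrt{m_2^{(1)}/m_2}\,Z_1)$.
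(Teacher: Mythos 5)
Your proposal is correct and follows essentially the same route as the paper: conditional Berry--Esseen given $V$, reparametrisation through $\tilde\phi_p$, CLT for $a_p'(V)$ (your $\eta(V)$) and SLLN for $b_p'(V)$ (your $\rho(V)$), and then passing the Lebesgue-measure convergence of $\tilde\phi_p$ through the resulting Gaussian mixture. The two cosmetic differences — you bound $|\zeta_p - \bbE_{Z_2}[\phi_p(\mu(V)+\sigma(V)Z_2)]|$ via Stieltjes integration by parts against the monotone $\phi_p$ where the paper uses a layer-cake representation $\int_0^1 \P(\phi_p(\cdot)>t\mid V)\,dt$, and you conclude continuity in the Gaussian parameters by Scheff\'e where the paper invokes a.e.\ continuity of the monotone $\tilde\phi$ plus bounded convergence — are interchangeable. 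One place where your write-up is actually sharper: by keeping the $\|f_p\|_\infty = (\rho_p\sqrt{2\pi})^{-1}$ factor in the bound
\[
\int|\tilde\phi_p-\tilde\phi|\,f_p \le \epsilon + \|f_p\|_\infty\,\Leb\bigl(|\tilde\phi_p-\tilde\phi|>\epsilon\bigr),
\]
you correctly record the $1/b_p'$ Jacobian from the change of variables $s = a_p'+b_p't$; the paper's displayed bound $\sqrt{2\pi}\epsilon + \Leb(|\tilde\phi_p-\tilde\phi|>\epsilon)$ silently drops this factor, which is harmless only because $b_p'(V)\to\sqrt{m_2^{(2)}/m_2}>0$ a.s.\ (the same nondegeneracy you flag via $\rho_\infty>0$). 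Your separate treatment of $m_2^{(2)}=0$ is a reasonable addition that the paper omits, though as stated it still quietly requires a mild anti-concentration argument for the law of $\eta(V)$ (e.g.\ the Berry--Esseen bound itself) so that $\P(\eta(V)\in\{|\tilde\phi_p-\tilde\phi|>\epsilon\})\to 0$; worth a sentence if you keep that case.
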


\begin{proof}
Write $V=(V_1,V_2,\ldots,V_p)^\top$ and $W=(W_1,W_2,\ldots,W_p)^\top$. Let us fix the following notations:
\begin{align*}
    m_1'(v) &= \mathbb{E}[d(V_1,W_1)|V_1=v], \\
    m_2'(v)&= \text{Var}(d(V_1,W_1)|V_1=v),\\
    m_3'(v) &= \mathbb{E}[|d(V_1,W_1)-m_1'(V_1)|^3|V_1=v].
\end{align*}
Define
\[
    T=\frac{\sum_{i=1}^p d(V_i,W_i)-\sum_{i=1}^{p} m_1'(V_i)}{\sqrt{\sum_{i=1}^{p} m_2'(V_i)}}.
\]
Applying the Berry-Esseen theorem (see, e.g., \cite{bhattacharya1986normalapprox}) on $\{d(V_i,W_i)\}_{i=1}^p$, conditional on $V$,
\[
    \sup_x|\mathbb{P}(T\leq x|V)-\Phi(x)|\leq C_1 \frac{Q}{\sqrt{p}},
\]
where $C_1$ is an absolute constant and
\[
    Q = \bigg(\frac{1}{p}\sum _{i=1}^p m_2'(V_i)\bigg)^{-\frac{3}{2}} \bigg(\frac{1}{p}\sum_{i=1}^p m_3'(V_i)\bigg)^{\frac{1}{2}}.
\]
By the SLLN,
\[
    Q \convas {m_2^{(2)}}^{-\frac{3}{2}} (m_3)^{\frac{1}{2}}.
\]
For notational convenience, let us define
\begin{align*}
a_p(V)&=\sum_{i=1}^{p} m_1'(V_i),\\
b_p(V)&= \sqrt{\sum_{i=1}^{p} m_2'(V_i)}.
\end{align*}
Suppose $Z\sim N(0,1)$ is independent of $V$ and $W$. Without loss of generality assume that $\phi_p$ is increasing. Then,
\begin{align*}
\bigg|\mathbb{E}[\phi_p &(d(V,W))|V] - \mathbb{E}[\phi_p(a_p(V)+b_p(V)Z)|V]\bigg|\\
&= \bigg|\int_0^1\mathbb{P}(\phi_p(a_p(V)+b_p(V)T)>t|V) dt -\int_0^1\mathbb{P}(\phi_p(a_p(V)+b_p(V)Z)>t|V) dt\bigg|\\
&\leq \int_0^1 \bigg|\mathbb{P}\bigg(T>\frac{\phi_p^{-1}(t)-a_p(V)}{b_p(V)}\bigg|V\bigg)-\mathbb{P}\bigg(Z>\frac{\phi_p^{-1}(t)-a_p(V)}{b_p(V)}\bigg|V\bigg)\bigg| dt\\
&\leq C_1 \frac{Q}{\sqrt p} \to 0
\end{align*}
almost surely. Notice that
\[
    \mathbb{E}[\phi_p(a_p(V)+b_p(V)Z)|V]=\frac{1}{\sqrt{2 \pi}}\int_\mathbb{R}\phi_p(a_p(V)+b_p(V)t)e^{-\frac{t^2}{2}}dt.
\]
From the relation between $\phi$ and $\tilde{\phi}$, we have
\begin{align*}
    \phi_p(a(V)+b(V)t) &=\tilde{\phi}_p\circ \psi_p^{-1}(a(V)+b(V)t)\\
    &= \tilde{\phi}_p \left(\frac{a(V)+b(V)t-pm_1}{ \sqrt{pm_2}}\right)  \\
    &= \tilde{\phi}_p\left( a_p'(V)+b_p'(V)t\right),
\end{align*}
where
\begin{align*}
    a_p'(V)&= \frac{a(V)-pm_1}{\sqrt{pm_2}},\\
    b_p'(V) &= \frac{b(V)}{\sqrt{pm_2}}.
\end{align*}
Using the central limit theorem on $\{m_1'(V_i)\}_{i=1}^p$, as $p\to \infty$, we have
\[
    \frac{a_p(V)-pm_1}{\sqrt{pm_2^{(1)}}} \convd N(0,1),
\]
and consequently,
\[
    a_p'(V) \convd N\left(0,\frac{m_2^{(1)}}{m_2}\right).
\]
On the other hand, an application of the SLLN shows that
\[
b_p'(V) \convas \sqrt{\frac{m_2^{(2)}}{m_2}}.
\]
We are interested in the weak limit of
\[
    \frac{1}{\sqrt{2 \pi}}\int_\mathbb{R}\tilde{\phi}_p(a_p'(V)+b_p'(V)t)e^{-\frac{t^2}{2}}dt
\]
as $p \to \infty$. Suppose (by an abuse of notation) that $a_p'$, $b_p'$ are sequences of real numbers. Fix $\epsilon>0$ and define the event
\[
    G = \bigg\{\bigg|\tilde{\phi}_p(a_p'+b_p't)dt - \tilde{\phi}(a_p'+b_p't)\bigg|\leq \epsilon\bigg\}.
\]
Then
\begin{align*}
    \int_\mathbb{R}|\tilde{\phi}_p &(a_p'+b_p't)dt - \tilde{\phi}(a_p'+b_p't)|e^{-\frac{t^2}{2}}dt\\
    &\leq \int_G|\tilde{\phi}_p(a_p'+b_p't)dt - \tilde{\phi}(a_p'+b_p't)|e^{-\frac{t^2}{2}}dt+\int_{G^c}|\tilde{\phi}_p(a_p'+b_p't)dt - \tilde{\phi}(a_p'+b_p't)|e^{-\frac{t^2}{2}}dt.
\end{align*}
The first integral above is bounded by $\sqrt{2\pi}\epsilon$ and the second one by $\Leb(|\tilde{\phi}_p-\tilde{\phi}|>\epsilon)$. Taking a supremum over $a_p'$, $b_p'$, we get
\[
    \sup_{a_p',\,b_p'}\int_\mathbb{R}|\tilde{\phi}_p(a_p'+b_p't)dt - \tilde{\phi}(a_p'+b_p't)|e^{-\frac{t^2}{2}}dt \leq \sqrt{2\pi}\epsilon + \Leb(|\tilde{\phi}_p-\tilde{\phi}|>\epsilon).
\]
Now, sending $p\to \infty$ and noticing that $\epsilon$ is arbitrary, we obtain
\[
    \sup_{a_p',\,b_p'}\int_\mathbb{R}|\tilde{\phi}_p(a_p'+b_p't)dt - \tilde{\phi}(a_p'+b_p't)|e^{-\frac{t^2}{2}}dt \to 0.
\]
as $p\to \infty$. It is enough to consider the weak limit of
\[
    \frac{1}{\sqrt{2 \pi}}\int_\mathbb{R}\tilde{\phi}_p(a_p'(V)+b_p'(V)t)e^{-\frac{t^2}{2}}dt
\]
as $p\to \infty$.

Next we prove that if $a_p'\to a'$ and $b_p' \to b'$ (both deterministic sequences) as $p \to \infty$, then
\[
\int_\mathbb{R}\tilde{\phi}(a_p'+b_p't)e^{-\frac{t^2}{2}}dt \to \int_\mathbb{R}\tilde{\phi}(a'+b't)e^{-\frac{t^2}{2}}dt,
\]
in other words, the map $(a',b')\mapsto \int_\mathbb{R}\tilde{\phi}(a'+b't)e^{-\frac{t^2}{2}} $ is continuous. Since $\tilde{\phi}$ is monotonic, it is continuous almost everywhere. Thus, $\tilde{\phi}(a_p+b_pt)\to \tilde{\phi}(a+bt)$ for almost every $t$. An application of the bounded convergence theorem then proves our claim. Combining this with the convergence of $a'(V)$ and $b'(V)$, we get
\[
    \frac{1}{\sqrt{2\pi}}\int_\mathbb{R}\tilde{\phi}(a_p'(V)+b_p(V)'t)e^{-\frac{t^2}{2}}dt \overset{\text{d}}\longrightarrow \frac{1}{\sqrt{2 \pi}}\int_\mathbb{R}\tilde{\phi}\left(\sqrt{\frac{m_2^{(1)}}{m_2}}Z+ \sqrt{\frac{m_2^{(2)}}{m_2}}t\right)e^{-\frac{t^2}{2}}dt,
\]
where $Z\sim N(0,1)$. This completes the proof.
\end{proof}

\begin{lemma}\label{lem:approx_recur}
$\mathbb{E} S_n(z)$ satisfies the following approximate recursion:
\[
    1 + z\mathbb{E}S_n(z) = \mathbb{E}_{\zeta}\bigg[\frac{\sigma^2\zeta\mathbb{E}S_n(z)}{1+c_n\sigma^2\zeta\mathbb{E}S_n(z)}\bigg] + o(1).
\]
\end{lemma}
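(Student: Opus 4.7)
The plan is to derive the approximate recursion through a resolvent identity combined with a Sherman--Morrison decoupling. Starting from $\bar{M}(\bar{M}-zI)^{-1} = I + z(\bar{M}-zI)^{-1}$ and the decomposition $\bar{M} = \frac{1}{n}\sum_{i}\xi_i X_i X_i^\top$, one has
\[
    1 + z S_n(z) = \frac{1}{p}\Tr\bigl[\bar{M} (\bar{M} - zI)^{-1}\bigr] = \frac{1}{np}\sum_{i=1}^n \xi_i\, X_i^\top (\bar{M} - zI)^{-1} X_i.
\]
Setting $\bar{M}_i := \bar{M} - \frac{\xi_i}{n} X_i X_i^\top = \frac{1}{n}\sum_{j \neq i}\xi_j X_j X_j^\top$ and $\beta_i := \frac{1}{n} X_i^\top (\bar{M}_i - zI)^{-1} X_i$, the Sherman--Morrison formula gives $X_i^\top(\bar{M}-zI)^{-1}X_i = n\beta_i/(1+\xi_i \beta_i)$, whence
\[
    1 + z S_n(z) = \frac{1}{p}\sum_{i=1}^n \frac{\xi_i \beta_i}{1 + \xi_i \beta_i}.
\]

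The technical core is to replace each $\beta_i$ by the single deterministic quantity $c_n \sigma^2 \mathbb{E}S_n(z)$ up to a negligible error. Crucially, $\bar{M}_i$ depends only on $\{X_j\}_{j\neq i}$ and is therefore independent of $X_i$ (and hence of $\xi_i$, which is a measurable function of $X_i$). Conditioning on $\bar{M}_i$ and invoking $\mathbb{E} w_{11}^4 < \infty$, a standard concentration of quadratic forms gives $\mathbb{E}\bigl|\beta_i - \tfrac{\sigma^2}{n}\Tr(\bar{M}_i-zI)^{-1}\bigr|^2 = O(1/n)$. Coupling this with the rank-one perturbation bound $\bigl|\tfrac{1}{p}\Tr(\bar{M}_i - zI)^{-1} - S_n(z)\bigr| \le 1/(pv)$ from Lemma~\ref{lem:matrix_perturb}(b) and the self-averaging estimate \eqref{eq:Stielt_a.s.=expected}, we conclude that $\beta_i - c_n \sigma^2 \mathbb{E}S_n(z) \to 0$ in $L^1$, uniformly in $i$. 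Since $\xi_i \in [0,1]$ and $\Im \mathbb{E}S_n(z) > 0$, the denominators $1 + \xi_i \beta_i$ are uniformly bounded away from zero (with high probability), so $b \mapsto \xi b/(1+\xi b)$ is uniformly Lipschitz in $b$ on the relevant region. Substituting and averaging then yields
\[
    1 + z S_n(z) = \frac{1}{p}\sum_{i=1}^n \frac{\xi_i\, c_n\sigma^2 \mathbb{E}S_n(z)}{1 + \xi_i\, c_n\sigma^2 \mathbb{E}S_n(z)} + \epsilon_n,
\]
where $\mathbb{E}|\epsilon_n| \to 0$; the latter follows because averaging $n$ such $O(n^{-1/2})$ errors against $\frac{1}{p}$ produces an overall $O(n^{-1/2})$ contribution.

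Taking expectations, using the exchangeability of $(\xi_i)_{i=1}^n$, and invoking $n/p = 1/c_n$,
\[
    1 + z\, \mathbb{E}S_n(z) = \frac{1}{c_n}\,\mathbb{E}\!\left[\frac{\xi_1\, c_n\sigma^2 \mathbb{E}S_n(z)}{1 + \xi_1\, c_n\sigma^2 \mathbb{E}S_n(z)}\right] + o(1).
\]
The function $\xi \mapsto \xi c_n \sigma^2 \mathbb{E}S_n(z)/(1 + \xi c_n \sigma^2 \mathbb{E}S_n(z))$ is continuous and bounded on $[0,1]$, so the distributional convergence $\xi_1 \convd \zeta$ supplied by Lemma~\ref{lemma:xi_conv}, combined with the bounded convergence theorem, replaces the inner expectation by $\mathbb{E}_\zeta[\,\cdot\,]$ up to $o(1)$, producing the stated recursion after pulling $1/c_n$ inside.

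The main obstacle is the quadratic-form concentration: we need $\beta_i$ to coincide with its conditional mean with an error that survives being summed over $n$ indices and divided by $p$. This is precisely the role of the finite fourth-moment hypothesis on $w_{11}$. A subsidiary but genuine technical point is ensuring $|1 + \xi_i \beta_i|$ stays uniformly separated from zero: one treats the regime $\xi_i$ close to $0$ via the trivial bound $|1+\xi_i\beta_i| \approx 1$, and the complementary regime via $|1+\xi_i\beta_i| \ge \Im(1+\xi_i\beta_i) = \xi_i \Im\beta_i \gtrsim \xi_i v$, exploiting $\Im z = v > 0$. The dependence between $\xi_i$ and $X_i$ — both functions of the same column — does not cause trouble here because $\beta_i$ is, after the concentration step, approximated by a quantity independent of $X_i$, after which $\xi_i$ alone carries the randomness to which Lemma~\ref{lemma:xi_conv} applies.
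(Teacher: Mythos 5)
Your argument is essentially the paper's: resolvent identity, Sherman--Morrison, concentration of quadratic forms, then replacement of $\xi_1$ by its weak limit $\zeta$. The one structural difference is cosmetic. The paper adjoins an independent dummy pair $(\xi_0, X_0)$, forms $\bar{M}' = \bar{M} + \tfrac{1}{n}\xi_0 X_0 X_0^\top$, writes $p = \Tr[(\bar{M}'-zI)(\bar{M}'-zI)^{-1}]$, and uses exchangeability over the $n+1$ indices to collapse at once to a single expectation involving $(\xi_0,X_0)$; you instead expand $1 + zS_n(z) = \tfrac{1}{p}\Tr[\bar{M}(\bar{M}-zI)^{-1}]$ into $n$ summands and perform a leave-one-out on each. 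The two maneuvers are interchangeable and lead to the same computation.

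Two technical points are handled less tightly than in the paper and deserve a note, though neither is a fatal gap. First, the denominator lower bound. The paper multiplies top and bottom by $z$ and invokes Lemma~\ref{lem:matrix_perturb}(h), namely $\Im\big(z\,y^\top(C-zI)^{-1}y\big)\ge 0$ for $C$ PSD, to obtain $|1 + \tfrac{\xi_i}{n}X_i^\top(\bar M_i - zI)^{-1}X_i| \ge v/|z|$ uniformly in $\xi_i$ and in the randomness. Your regime-splitting bound $\Im(1+\xi_i\beta_i) = \xi_i\Im\beta_i \gtrsim \xi_i v$ relies on $\Im\beta_i = \tfrac{v}{n}\|(\bar M_i - \bar z I)^{-1}X_i\|^2 \gtrsim v$, which is not automatic: it requires a lower bound on $\|X_i\|^2/\|\bar M_i - \bar z I\|_\op^2$, hence a priori control of $\|\bar M_i\|_\op$ (Bai--Yin) and of $\|X_i\|$, and holds only with high probability; and the low-$\xi_i$/high-$\xi_i$ split leaves the Lipschitz constant of $b\mapsto \xi b/(1+\xi b)$ depending on the crossover threshold. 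The $z$-trick removes all of this. Second, replacing $\bbE[f_n(\xi_1)]$ by $\bbE[f_n(\zeta)]$: the map $f_n(\xi) = \xi\sigma^2 \bbE S_n(z)/(1 + \xi c_n\sigma^2\bbE S_n(z))$ depends on $n$, so ``bounded convergence'' plus $\xi_1\convd\zeta$ does not directly apply. What is needed, and what the paper uses, is a Lipschitz bound on $f_n$ uniform in $n$ (again via the $v/|z|$ denominator bound) together with $\bbE|\xi_1-\zeta|\to 0$, which follows from Lemma~\ref{lemma:xi_conv}, boundedness in $[0,1]$, and a Skorohod coupling. With these two routine repairs your proposal coincides with the paper's argument.
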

\begin{proof}
    Suppose $\xi_{0}\sim \xi_1$, $X_{0}\sim X_1$ are mutually independent and they are independent of the $\{X_i\}_{i=1}^n$. Define
\begin{align*}
    \bar{M}'&=\bar{M}+\frac{1}{n}\xi_{0}X_{0}X_{0}^\top,\\
    S_n'(z)&= \frac{1}{p}\Tr\left ({\bar M}'-zI\right)^{-1}.
\end{align*}
Then
\begin{align*}
    p &= \Tr\left ({\bar M}'-zI\right) \left ({\bar M}'-zI\right)^{-1}\\
    &=\frac{1}{n}\sum_{i=0}^{n} \xi_i X_i^\top \left ({\bar M}'-zI\right)^{-1} X_i- z \Tr \left ({\bar M}'-zI\right)^{-1}.
\end{align*}
Taking expectation on both sides and using the exchangeability of the summands, we have
\begin{align*}
    p&=\left(1+\frac{1}{n}\right) \mathbb{E}(\xi_0 X_0^\top \big ({\bar M}'-zI\big)^{-1}X_0) - pz\mathbb{E}S_n'(z).
\end{align*}
Dividing by $n$ and noticing that $|\xi_0 X_0^\top \big ({\bar M}'-zI\big)^{-1}X_0|\leq 1+\frac{|z|}{v}$, we get
\[
    c_n+ c_nz\mathbb{E}S'_n(z)=\frac{1}{n}\mathbb{E}(\xi_0 X_0^\top\big ({\bar M}'-zI\big)^{-1}X_0)+o(1).
\]
Observe that
\[
    |S'_n(z)-S_n(z)|\leq \frac{v}{p}.
\]
Therefore
\[
    c_n+c_nz\mathbb{E}S_n(z)=\frac{1}{n}\mathbb{E}(\xi_0X_0^\top\big ({\bar M}'-zI\big)^{-1}X_0) +o(1).
\]
An application of the Sherman-Morrison formula on the first term yields
\[
    \frac{1}{n}\xi_0X_0^\top\big ({\bar M}'-zI\big)^{-1}X_0= \frac{\frac{1}{n}\xi_0X_0^\top\left ({\bar M}-zI\right)^{-1}X_0}{1+\frac{1}{n}\xi_0X_0^\top\left ({\bar M}-zI\right)^{-1}X_0}.
\]
Then
\begin{align*}
    \bigg|\frac{1}{n}\xi_0 &X_0^\top\big ({\bar M}'-zI\big)^{-1}X_0-\frac{c_n\sigma^2\xi_0\mathbb{E}S_n(z)}{1+c_n\sigma^2\xi_0\mathbb{E}S_n(z)}\bigg|\\
    &=\bigg|\frac{\frac{1}{n}\xi_0X_0^\top\big ({\bar M}-zI\big)^{-1}X_0}{1+\frac{1}{n}\xi_0X_0^\top\big ({\bar M}-zI\big)^{-1}X_0}-\frac{\frac{1}{n}\xi_0\mathbb{E}(X_0^\top\big ({\bar M}-zI\big)^{-1}X_0)}{1+\frac{1}{n}\xi_0\mathbb{E}(X_0^\top\big ({\bar M}-zI\big)^{-1}X_0)}\bigg|\\
    &\leq \frac{\frac{1}{n}\xi_0|X_0^\top\big ({\bar M}-zI\big)^{-1}X_0-\mathbb{E}(X_0^\top\big ({\bar M}-zI\big)^{-1}X_0)|}{|1+\frac{1}{n}\xi_0X_0^\top\big ({\bar M}-zI\big)^{-1}X_0||1+\frac{1}{n}\xi_0\mathbb{E}(X_0^\top\big ({\bar M}-zI\big)^{-1}X_0)|} \\
    &\leq \frac{\frac{|z|^2}{n}|X_0^\top\big ({\bar M}-zI\big)^{-1}X_0-\mathbb{E}(X_0^\top\big ({\bar M}-zI\big)^{-1}X_0)|}{|z+z\frac{1}{n}\xi_0X_0^\top\big ({\bar M}-zI\big)^{-1}X_0||z+z\frac{1}{n}\xi_0\mathbb{E}(X_0^\top\big ({\bar M}-zI\big)^{-1}X_0)|} \\
    &\leq \frac{|z|^2}{v^2} \frac{1}{n}|X_0^\top\big ({\bar M}-zI\big)^{-1}X_0-\mathbb{E}(X_0^\top\big ({\bar M}-zI\big)^{-1}X_0)|,
\end{align*}
where we have used Lemma~\ref{lem:matrix_perturb}(h) to lower bound the denominator. Since $X_0$ is independent of $(\bar M -zI)^{-1}$, by Lemma~\ref{lem:matrix_quad_form_moment},
\[
    \bbE |X_0^\top\left ({\bar M}-zI\right)^{-1}X_0-\mathbb{E}(X_0^\top\left ({\bar M}-zI\right)^{-1}X_0)| \leq C\bbE \sqrt{\Tr(\bar M -zI)^{-1}} \leq \frac{C \sqrt p}{\sqrt v},
\]
for some constant $C > 0$, which only depends on the second and fourth moments of $w_{11}$.

Thus
\[
    \mathbb{E}\frac{1}{n}\xi_0X_0^\top\big ({\bar M}'-zI\big)^{-1}X_0=\mathbb{E}\frac{c_n\sigma^2\xi_0\mathbb{E}S_n(z)}{1+c_n\sigma^2\xi_0\mathbb{E}S_n(z)}+o(1).
\]
Also,
\begin{align*}
    \bigg|\mathbb{E}\frac{c_n\sigma^2\xi_0\mathbb{E}S_n(z)}{1+c_n\sigma^2\xi_0\mathbb{E}S_n(z)} &-\mathbb{E}\frac{c_n\sigma^2\zeta\mathbb{E}S_n(z)}{1+c_n\sigma^2\zeta\mathbb{E}S_n(z)}\bigg|\\
    &\leq \sigma^2 c_n|z|^2|\mathbb{E}S_n(z)| \mathbb{E}\frac{|\xi_0-\zeta|}{|z+zc_n\sigma^2\xi_0\mathbb{E}S_n(z)||z+zc_n\sigma^2\zeta_0\mathbb{E}S_n(z)|}\\
    &\leq \sigma^2 c_n \frac{|z|^2}{v^2}\frac{1}{v}\mathbb{E}|\xi_0-\zeta|.
\end{align*}
Lemma~\ref{lemma:xi_conv} coupled with the Skorohod representation theorem gives us that $\mathbb{E}|\xi_0-\zeta|\to 0$ as $p\to\infty$. Combining everything, we get the desired approximate functional for $\bbE S_n(z)$:
\[
    c_n+c_nz\mathbb{E}S_n(z)=\mathbb{E}_{\zeta}\bigg[\frac{c_n\sigma^2\zeta\mathbb{E}S_n(z)}{1+c_n\sigma^2\zeta\mathbb{E}S_n(z)}\bigg] + o(1).
\]
This completes the proof.
\end{proof}

\begin{lemma}\label{lem:sol_uniqueness}
The equation
\[
    1+zs(z)=\mathbb{E}_{\zeta}\bigg[\frac{\sigma^2\zeta s(z)}{1+c\sigma^2\zeta s(z)}\bigg],
\]
$z\in\mathbb{C}^+$, has a unique solution for $s(z)$ in $\mathbb{C}^+$, where $\zeta$ is defined in \eqref{eq:zeta_defn}.
\end{lemma}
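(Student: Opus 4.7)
The plan is to assume two solutions $s_1, s_2 \in \mathbb{C}^+$ of \eqref{eq:non_smooth_ST} and derive a contradiction via a subtract-and-factor manipulation combined with a strict inequality extracted from the imaginary part of the equation. As a first step, I recast \eqref{eq:non_smooth_ST} in the cleaner ``Silverstein'' form: dividing through by $s$ (nonzero, since $\Im s > 0$) and rearranging gives
\[
    z + \frac{1}{s} \;=\; \mathbb{E}_\zeta\!\left[\frac{\sigma^2\zeta}{1+c\sigma^2\zeta\, s}\right],
\]
which exposes both the key difference factorization and the imaginary-part identity below. I also note that since $\tilde{\phi}\in[0,1]$, the random variable $\zeta$ takes values in $[0,1]$, so $1+c\sigma^2\zeta s$ has strictly positive imaginary part wherever $\zeta>0$ and all expectations are well defined.

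\textbf{Subtracting solutions and Cauchy--Schwarz.} Subtracting the rewritten equation for $s_1$ and $s_2$ and using the partial-fraction identity $\frac{1}{1+c\sigma^2\zeta s_1} - \frac{1}{1+c\sigma^2\zeta s_2} = \frac{c\sigma^2\zeta(s_2-s_1)}{(1+c\sigma^2\zeta s_1)(1+c\sigma^2\zeta s_2)}$, I can pull out the common factor $(s_1 - s_2)$ to obtain
\[
    (s_1 - s_2)\left[\frac{1}{s_1 s_2} \;-\; c\sigma^4\, \mathbb{E}_\zeta\!\left[\frac{\zeta^2}{(1+c\sigma^2\zeta s_1)(1+c\sigma^2\zeta s_2)}\right]\right] \;=\; 0.
\]
If $s_1 \neq s_2$ then the bracket vanishes; taking absolute values and applying Cauchy--Schwarz then yields
\[
    \frac{1}{|s_1|\,|s_2|} \;\leq\; c\sigma^4\,\sqrt{\mathbb{E}_\zeta\!\left[\tfrac{\zeta^2}{|1+c\sigma^2\zeta s_1|^2}\right]\mathbb{E}_\zeta\!\left[\tfrac{\zeta^2}{|1+c\sigma^2\zeta s_2|^2}\right]}.
\]

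\textbf{Strict reverse bound from $\Im z > 0$.} The heart of the argument is the matching \emph{strict} reverse bound. Taking imaginary parts of the rewritten equation, and using $\Im(-1/s) = \Im s/|s|^2$ together with $\Im\frac{\sigma^2\zeta}{1+c\sigma^2\zeta s} = -\frac{c\sigma^4\zeta^2\,\Im s}{|1+c\sigma^2\zeta s|^2}$, I arrive at
\[
    \Im z \;=\; \Im s_j \left[\frac{1}{|s_j|^2} - c\sigma^4\,\mathbb{E}_\zeta\!\left[\frac{\zeta^2}{|1+c\sigma^2\zeta s_j|^2}\right]\right], \quad j = 1, 2.
\]
Since $\Im z > 0$ and $\Im s_j > 0$, each bracket is strictly positive, yielding the strict bounds $c\sigma^4\,\mathbb{E}_\zeta[\zeta^2/|1+c\sigma^2\zeta s_j|^2] < 1/|s_j|^2$ for $j=1,2$. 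Multiplying these two strict inequalities and taking square roots directly contradicts the Cauchy--Schwarz bound above, forcing $s_1 = s_2$.

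\textbf{Main obstacle.} The only genuinely essential ingredient is the strict positivity $\Im z > 0$ (precisely the hypothesis $z \in \mathbb{C}^+$, as opposed to the real axis); this is what lets Cauchy--Schwarz be defeated and is the one place the open upper half plane is really used. A trivial preliminary to dispatch at the outset is the degenerate case $\zeta \equiv 0$, in which \eqref{eq:non_smooth_ST} reduces to $1+zs=0$ with unique solution $s=-1/z$. Beyond this the remainder is routine algebra, and the overall template is standard in the Silverstein--Bai analysis of generalized Mar\v{c}enko--Pastur equations; I do not anticipate any substantive technical difficulty.
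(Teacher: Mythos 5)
Your proof is correct and is essentially the paper's argument: subtract the two purported fixed-point equations, factor out $s_1-s_2$, apply Cauchy--Schwarz, and contradict it with the strict bound extracted from the imaginary part, which hinges on $\Im z>0$. Your rewrite to the self-consistent form $z+1/s=\mathbb{E}_\zeta\big[\sigma^2\zeta/(1+c\sigma^2\zeta s)\big]$ is a cosmetic streamlining of the same computation, and, as a minor point in your favour, you state the imaginary-part inequality in the correct direction (namely $c\sigma^4\,\mathbb{E}_\zeta\big[\zeta^2/|1+c\sigma^2\zeta s_j|^2\big]<1/|s_j|^2$), whereas the paper's intermediate display and \eqref{eq:lem_4.2_aux} have the sense flipped by a sign slip in computing $\mathrm{Im}(s_j)$, even though the intended contradiction is the same as yours.
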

\begin{proof}
Suppose \eqref{eq:non_smooth_ST} has two distinct solutions $s_1, s_2$ in $\mathbb{C}^+$. Fix some $z\in \mathbb{C}^2$ such that $s_1(z) \neq s_2(z)$. Note for $j=1, 2$,
\[
    s_j = \frac{1}{-z + \bbE \frac{\sigma^2 \zeta}{1 + c \sigma^2 s_j \zeta}}.
\]
Notice
\[
    \mathrm{Im}(s_j) = \frac{v - \mathrm{Im}(\bbE \frac{\sigma^2 \zeta}{1 +c \sigma^2 s_j \zeta})}{|-z + \bbE \frac{\sigma^2 \zeta}{1 + c \sigma^2 s_j \zeta}|^2} < \frac{\bbE\bigg[\frac{ c \sigma^4 \mathrm{Im}(s_j) \zeta^2}{|1 + c \sigma^2 s_j \zeta|^2}\bigg]}{|-z + \bbE \frac{\sigma^2 \zeta}{1 + c \sigma^2 s_j \zeta}|^2}.
\]
So,
\begin{equation}\label{eq:lem_4.2_aux}
    \frac{\bbE\big[\frac{ c \sigma^4 \zeta^2}{|1 + c \sigma^2 s_j \zeta|^2}\big]}{|-z + \bbE \frac{\sigma^2 \zeta}{1 + c \sigma^2 s_j \zeta}|^2} > 1.
\end{equation}
Also,
\[
    s_1 - s_2 = \frac{c \sigma^4 (s_1 - s_2) \bbE \frac{\zeta^2}{(1 + c \sigma^2 s_1 \zeta)(1 + c \sigma^2 s_2 \zeta)} }{(-z + \bbE \frac{\sigma^2 \zeta}{1 + c \sigma^2 s_1 \zeta})(-z + \bbE \frac{\sigma^2 \zeta}{1 + c \sigma^2 s_2 \zeta})}.
\]
Cancelling $s_1 - s_2$ from both sides and applying the Cauchy-Schwarz inequality on the right hand side we get
\[
    1 \leq \frac{\bbE\big[\frac{ c \sigma^4 \zeta^2}{|1 + c \sigma^2 s_1 \zeta|^2}\big]}{|-z + \bbE \frac{\sigma^2 \zeta}{1 + c \sigma^2 s_1 \zeta}|^2} \frac{\bbE\big[\frac{ c \sigma^4 \zeta^2}{|1 + c \sigma^2 s_2 \zeta|^2}\big]}{|-z + \bbE \frac{\sigma^2 \zeta}{1 + c \sigma^2 s_2 \zeta}|^2},
\]
which contradicts \eqref{eq:lem_4.2_aux}.
\end{proof}

Now we are ready to prove Theorem~\ref{thm:conv_nonsmooth}.
\begin{proof}[Proof of Theorem~\ref{thm:conv_nonsmooth}]
Applying Lemmas~\ref{lem:non_smooth_reduction_1} and \ref{lem:non_smooth_reduction_2}, it is enough to consider $\bar{M}$ instead of $M$.  Since $|\mathbb{E}S_n(z)|\leq \frac{1}{v}$, using the Bolzano-Weierstrass theorem, $\mathbb{E}S_n(z)$ has a convergent subsequence. Consider any such subsequence $|\mathbb{E}S_{n_k}(z)|\to s(z)$. Then, by Lemma~\ref{lem:approx_recur}, $s(z)$ will satisfy the limiting equation
\[
1+zs(z)=\mathbb{E}\frac{\sigma^2\zeta s(z)}{1+c\sigma^2\zeta s(z)}.
\]
Lemma~\ref{lem:sol_uniqueness} shows that this equation has a unique solution for $s(z)$ in $\mathbb{C}^+$.

This shows that all the subsequential limits of $\mathbb{E}S_n(z)$ are same proving that $\mathbb{E}S_n(s)\to s(z)$ as $n\to \infty$ where $s(z)$ is the unique solution in $\mathbb{C}^+$ of the aforementioned equation. We can thus conclude that, almost surely, $\mu_{\bar{M}}$ converges weakly to some sub-probability measure $\mu$ with Stieltjes transform $s$. In order to show that $\mu$ is indeed a probability measure, by Prohorov's theorem it is sufficient to show that $\int x^2 \mathbb{E}\mu_{\bar{M}}(dx)$ is uniformly bounded. By the trace-moment formula, the last quantity is the same as $\frac{1}{p}\mathbb{E}\Tr(\bar{M}^2)$ which we compute next.
\begin{align*}
    \frac{1}{p}\mathbb{E}\Tr(\bar{M}^2) &= \frac{1}{pn^2} \bbE\Tr \bigg( \sum_{i=1}^n \xi_i^2 X_iX_i^\top X_i X_i^\top +\sum_{i \neq j} \xi_i \xi_j X_i X_i^\top X_j X_j ^\top \bigg)\\
    &= \frac{1}{p n^2} (n\bbE \norm{X_1}^4 + n(n-1) \bbE (X_1^\top X_2)^2)\\
    &= O(1),
\end{align*}
where the last step follows from the fact that $\bbE \norm{X_1}^4 = O(p^2)$ and $\bbE (X_1^\top X^2)^2 = \V(X_1^\top X_2) = O(p)$. This completes the proof of Theorem~\ref{thm:conv_nonsmooth}.
\end{proof}

\subsection{Proof of Theorem~\ref{thm:conv_smooth}}
We again begin with the matrix Rayleigh quotient representation:
\[
    M = \frac{1}{n^2} X^\top L X^\top.
\]

Observe that $\mathbb{E}[A] = \alpha_{n, p} (J - I)$ and $\mathbb{E}[D]= (n - 1) \alpha_{n, p} I$, where $J = \mathbf{1}\mathbf{1}^\top$. Define
\[
    \cL = \bbE[L] = \bbE[D] - \bbE[A] = \alpha_{n, p} ((n - 1) I - (J - I)) = \alpha_{n, p} (nI - J).
\]

Further, let
\begin{align}
    F &=  \frac{1}{n} (A - \alpha_{n, p} (J - I)), \\
    G &= \frac{1}{n} (D - (n - 1)\alpha_{n, p} I) = \diag(F \bone).
\end{align}
With the above notation, $\frac{1}{n}L = \frac{1}{n} \cL - F + G$. Thus
\begin{align*}
    M = \frac{1}{n^2} X L X^\top &= \frac{1}{n} X \bigg(\frac{1}{n} \cL + F - G\bigg) X^\top \\
    &= \frac{1}{n^2} X\cL X^\top + \frac{1}{n} X (F - G) X^\top \\
    &= \tilde{M} + \frac{1}{n} X (F - G) X^\top,
\end{align*}
where $\tilde{M}= \frac{1}{n^2} X \cL X^\top$. We will show that the $M$ and $\tilde{M}$ have the same LSD. Indeed,
\begin{align*}
    d_{W_2}(\mu_M, \mu_{\tilde M}) &\leq \frac{1}{\sqrt{n}}\norm{M - \tilde{M}}_{\hs} \\
    &= \frac{1}{n \sqrt n} \norm{X (F - G) X^\top}_{\hs} \\
    &\leq \frac{1}{n \sqrt n} \norm{X}_{\op} \norm{F - G}_F \norm{X^\top}_{\op} \\
    &= \frac{1}{n}\norm{X X^\top}_{\op} \frac{1}{\sqrt n} \norm{F - G}_{\hs} \\
    &\leq \frac{1}{n} \norm{X X^\top}_{\op} \frac{1}{\sqrt n}(\norm{F}_{\hs} + \norm{G}_{\hs}) \\
    &\leq \frac{1}{n} \norm{X X^\top}_{\op} (\norm{F}_{\op} + \frac{1}{\sqrt{n}} \norm{F\mathbf{1}}) \\
    &\leq \frac{2}{n} \norm{X X^\top}_{\op} \norm{F}_{\op}.
\end{align*}

Under the finite fourth moment condition, recall that $\frac{1}{n}\norm{X X ^\top}_{\op}\to \sigma^2(1 + \sqrt{c})^2$ almost surely. We now show that $\norm{F}_{\op} \convas 0$. Because of Assumption~\ref{ass:lc_class}, we can apply Theorem~1 of \cite{amini2021concentration} on $A$ to get that
\[
    \P\bigg(\norm{F}_{\op} \geq 2 L \omega \sigma\bigg(C + \frac{t}{\sqrt{n}}\bigg)\bigg) \leq \exp(-t^2 / C^2)
\]
for some $C > 0$. We can simplify this as follows:
\[
    \P\bigg(\norm{F}_{\op} \geq 2 L \omega \sigma t\bigg) \leq \exp(- C'^2 t^2 ).
\]
for some $C'>0$. Now choosing $t_n= \frac{1}{C'} \sqrt{\log n}$, we note that
\[
    \P\bigg(\norm{F}_{\op} \geq \frac{2 L \omega \sigma \sqrt {\log n}}{C'} \bigg) \leq \frac{1}{n^2}.
\]
Combining this with the fact that $L \omega = o(1/\sqrt{\log n})$, we conclude that $\norm{F}_{\op} \to 0$ almost surely. This implies that $d_{W_2}(\mu_M, \mu_{\tilde{M}})\to 0$ almost surely, that is $M$ and $\tilde{M}$ have the weak limit almost surely if it exists. Notice that $\mathcal{L} = \alpha_{p}n I - \alpha_{p} J$. Since, $J$ is of rank 1, by Lemma~\ref{lem:rank_ineq},
\[
    d_{KS}(\mu_{\tilde{M}},\mu_{\hat{M}}) \leq \frac{1}{n}
\]
where $d_{KS}$ denotes the Kolmogorov-Smirnov distance and $\hat{M} = \frac{\alpha_{p}}{n} XX^\top.$ Since
\[
    d_{W_2}(\mu_{\hat{M}}, \mu_{\frac{\alpha}{n}X X^\top}) \leq \frac{|\alpha_{p}-\alpha|}{n}\norm{X X^\top}_\op \xrightarrow{a.s.} 0,
\]
we can only consider $\frac{\alpha_{p}}{n}X X^\top$. It is well known that the last matrix has $\MP_{c, \alpha^2\sigma^2}$ as its LSD. Combining everything, we conclude that
\[
\mu_M \xrightarrow{d} \MP_{c, \alpha^2\sigma^2},
\]
almost surely. This completes the proof.

\subsection{Proof of Theorem~\ref{thm:conv_semi_high_dim}}

\begin{proof}[Proof of Theorem~\ref{thm:conv_semi_high_dim}]We define $A, D, L$ as in the proof of Theorem~\ref{thm:conv_semi_high_dim} and
\begin{align*}
    &{\tilde E} \coloneqq \sqrt{\frac{n}{p}} \bigg(\frac{n-1}{n^2} \sum_{i=1}^n \xi_i X_i X_i^\top - \alpha_p \sigma^2 I\bigg), &{\bar E} &\coloneqq \sqrt{\frac{n}{p}} \bigg(\frac{1}{n} \sum_{i=1}^n \xi_i X_i X_i^\top - \alpha_p \sigma^2 I\bigg).
\end{align*}
One can prove analogues of Lemmas~\ref{lem:non_smooth_reduction_1} and \ref{lem:non_smooth_reduction_2} in this set up so that $d_{W_2}(\mu_E, \mu_{\tilde E}) \convas 0$ and $d_{W_2}(\mu_{\tilde E} , \mu_{\bar E}) \convas 0$, respectively (the analogue of the Bai-Yin result in this regime can be found in \cite{chen2012convergence}). For the purpose of finding the LSD, it is thus enough to consider $\bar E$. Now, $\bar{E}$ can be further decomposed as $\bar{E}_1 + \bar{E}_2$, where
\begin{align*}
    \bar{E}_1 = \frac{1}{\sqrt{np}} \sum_{i=1}^n \xi_i (X_i X_i^\top - \sigma^2 I), \qquad \text{and} \qquad
    \bar{E}_2 = \frac{\sigma^2}{\sqrt{np}} \sum_{i=1}^n (\xi_i - \alpha_p) I.
\end{align*}
Since the $(\xi_i - \alpha_p)$'s are centered, independent and bounded, invoking Hoeffding's inequality, $\norm{\bar{E}_2}_{\op} \leq \frac{\sigma^2}{\sqrt[4]{p}}$ w.p. $\geq 1 - 2e^{-\frac{\sqrt{p}}{2}}$, implying that $d_{W_2} (\mu_{\bar{E}}, \mu_{\bar{E}_1}) \convas 0$. In order to find the LSD of $\bar E_1$, we proceed with the Stieltjes transform once again. We first show \eqref{eq:Stielt_a.s.=expected}. Our technique remains same for this part, except that we need analogues of equations \eqref{eq:bdd_mg_diff} and \eqref{eq:Burkholder_application}. First we define
\begin{align}
    &\tilde S_n(z) = \frac{1}{p} \Tr( \bar{E}_1 - zI)^{-1}, \label{eq:ST_defn_semi_high}\\
    &\bar{E}_{1k} = \bar{E}_1 -\frac{1}{\sqrt{np}}\xi_k (X_k X_k^\top - \sigma^2 I).
\end{align}
By Lemma~\ref{lem:matrix_perturb} (b) and (c),
\[
    |\Tr(\bar E_1 - zI)^{-1} -\Tr(\bar E_{1k} - zI)^{-1}| \leq \frac{p + \norm{X_k}^2}{v^2 \sqrt{np}}.
\]
Now, by virtue of Lemma~\ref{lem:Burkholder} for $l=4$,
\begin{align*}
    \bbE|\tilde{S}_n(z) - \bbE \tilde{S}_n(z)|^4 &\leq \frac{K_4}{p^4} \bbE \bigg( \sum_{k=1}^n \bigg(\frac{2(p + \norm{X_k}^2)}{\sqrt{np}}\bigg)^2 \bigg)^2 \\
    &\leq \frac{16 K_4 n^2}{v^4 n^2 p^6} \bbE \bigg( \sum_{k=1}^n (2p^2 + 2\norm{X_k}^4)\bigg)^2 \\
    &\leq \frac{64 K_4 n^2}{v^4 n^2 p^6} \bbE \bigg(np^2 + \sum_{k=1}^n \norm{X_k}^4\bigg)^2 \\
    &\leq \frac{128 K_4 n^2}{v^4 n^2 p^6}  \bigg(n^2p^4 + \bbE \bigg(\sum_{k=1}^n \norm{X_k}^4\bigg)^2\bigg) \\
    &\leq \frac{128 K_4 n^2}{v^4 n^2 p^6}  \bigg(n^2p^4 + n\bbE \norm{X_1}^8 + n(n-1) (\bbE \norm{X_1}^4)^2 \bigg) \\
    &=O(1/p^2).
\end{align*}
Here the last line follows from the facts that $\bbE \norm{X_1}^8 = O(p^4)$ and $\bbE \norm{X_1}^4 = O(p^2)$. Now, by Borel-Cantelli lemma, \eqref{eq:Stielt_a.s.=expected} follows. So, it is enough to consider $\bbE \tilde{S}_n(z)$. Define $\tilde{s}_n(z) = \bbE \tilde{S}_n(z)$. By Lemma~\ref{lem:approx_recur_semi_high}, we get
\[
    \sigma^4 \beta_n^2 \tilde{s}_n^2(z) + z \tilde{s}_n(z) + 1 = o(1),
\]
which has the solutions
\begin{equation}\label{eq:Stielt_trans_quad}
    \tilde{s}_n(z) = \frac{-z \pm \sqrt{z^2 - 4\sigma^4 \beta_n^2 + o(1)}}{2\sigma^4 \beta_n^2}.
\end{equation}
Here $\sqrt {z'}$, for any $z' \in \mathbb{C}\backslash \mathbb{R}$, denotes the square root in the upper-half plane. Clearly, one must take the $+$ sign in  \eqref{eq:Stielt_trans_quad}, otherwise the right hand side has negative imaginary part which is not allowed. Now taking limit as $n \to \infty$, we get
\[
    \lim_{n \to \infty} \tilde{s}_n(z) = \frac{-z \pm \sqrt{z^2 - 4\sigma^4 \beta^2}}{2\sigma^4 \beta^2},
\]
which the Stieltjes transform of the semi-circle law with variance $\beta^2 \sigma^4$. This completes the proof of Theorem~\ref{thm:conv_semi_high_dim}.
\end{proof}
\begin{lemma}\label{lem:approx_recur_semi_high}
   Let $\tilde{S}_n(z)$ and $\tilde{s}_n(z)$ be defined as in Theorem~\ref{thm:conv_semi_high_dim}. Then, $\tilde{s}_n(z)$ satisfies the following approximate functional equation
   \[
   \sigma^4 \beta_n^2 \tilde{s}_n^2(z) + z \tilde{s}_n(z) + 1 = o(1).
   \]
\end{lemma}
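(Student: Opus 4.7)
My plan follows the same blueprint as the proof of Lemma~\ref{lem:approx_recur}: introduce an auxiliary independent copy, apply the Sherman-Morrison identity, and combine with concentration of quadratic forms to derive an approximate self-consistent equation for $\tilde s_n(z)$. Specifically, let $(X_0, \xi_0)$ be an independent copy of $(X_1, \xi_1)$ and set $\bar E_1' := \bar E_1 + \tfrac{\xi_0}{\sqrt{np}}(X_0 X_0^\top - \sigma^2 I)$, with $G = (\bar E_1 - zI)^{-1}$, $G' = (\bar E_1' - zI)^{-1}$, and $\tilde S_n' = \tfrac{1}{p}\Tr G'$. Starting from $p = \Tr((\bar E_1' - zI)G')$, expanding $\bar E_1'$ as $\tfrac{1}{\sqrt{np}}\sum_{i=0}^n \xi_i(X_iX_i^\top - \sigma^2 I)$, taking expectation, and using exchangeability of the $n+1$ summands yields
\[
1 + z \bbE \tilde S_n'(z) = \frac{n+1}{p\sqrt{np}} \bbE\bigl[\xi_0\bigl(X_0^\top G' X_0 - \sigma^2 \Tr G'\bigr)\bigr].
\]
Lemma~\ref{lem:matrix_perturb}(b,c) shows that $|\bbE\tilde S_n'(z) - \tilde s_n(z)| = o(1)$, so in what follows I freely identify $G'$ with $G$ and $\bbE\tilde S_n'$ with $\tilde s_n$ up to $o(1)$ errors.

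To evaluate the right-hand side, I would carry out a first-order resolvent expansion $G' = G - G\Delta' G + O(\Delta'^2)$ with $\Delta' = \tfrac{\xi_0}{\sqrt{np}}(X_0X_0^\top - \sigma^2 I)$, producing
\[
X_0^\top G' X_0 - \sigma^2 \Tr G' = (X_0^\top G X_0 - \sigma^2 \Tr G) - \tfrac{\xi_0}{\sqrt{np}}(X_0^\top G X_0)^2 + \tfrac{2\sigma^2 \xi_0}{\sqrt{np}} X_0^\top G^2 X_0 - \tfrac{\sigma^4 \xi_0}{\sqrt{np}} \Tr G^2 + O(\Delta'^2).
\]
Because $(X_0, \xi_0)$ is independent of $G$ and $\bbE w_{11}^8 < \infty$, Lemma~\ref{lem:matrix_quad_form_moment} yields $X_0^\top G X_0 = \sigma^2 \Tr G + O_{L^2}(\sqrt{p}/v) = \sigma^2 p \tilde s_n(z) + o_{L^2}(p)$, and similarly for $X_0^\top G^2 X_0$. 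Multiplied by the outer prefactor $\tfrac{n+1}{p\sqrt{np}} = \tfrac{1}{p}\sqrt{n/p}(1+o(1))$, the fluctuation $X_0^\top G X_0 - \sigma^2 \Tr G$ and the $\Tr G^2 = O(p/v^2)$ pieces are both $o(1)$ in the regime $p \gg \sqrt n$, leaving only the surviving term $-\tfrac{\xi_0}{\sqrt{np}}(X_0^\top G X_0)^2 \approx -\tfrac{\sigma^4 \xi_0 p^2 \tilde s_n^2(z)}{\sqrt{np}}$. Multiplying by the remaining $\xi_0$, taking expectation, and collecting the prefactor, the right-hand side collapses to $-\sigma^4 \beta_n^2 \tilde s_n^2(z) + o(1)$, giving the claimed $1 + z\tilde s_n(z) + \sigma^4 \beta_n^2 \tilde s_n^2(z) = o(1)$.

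The main obstacle I anticipate is identifying the variance coefficient as exactly $\sigma^4 \beta_n^2 = \sigma^4 \bbE A_{12}^2$ rather than the naive $\sigma^4 \bbE[\xi_0^2]$ that the pipeline above produces literally: by the Hoeffding decomposition $A_{12} = \alpha_p + (\xi_1 - \alpha_p) + (\xi_2 - \alpha_p) + \epsilon_{12}$, one has $\beta_n^2 - \bbE\xi_0^2 = \V(\xi_1) + \bbE\epsilon_{12}^2 > 0$ in general. Closing this gap requires revisiting the reductions $E \to \tilde E \to \bar E$ in the semi-high-dimensional regime more carefully: the $\xi'_{ij}$ pieces living in $D_2$ and in the off-diagonal of $A$, while negligible in the proportional regime, are of borderline operator-norm size $\sqrt{p/n}$ here and hence non-negligible once scaled by $\sqrt{n/p}$. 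Retaining these pieces through the Stieltjes-transform computation should contribute the missing $\V(\xi_1) + \bbE\epsilon_{12}^2$ variance and promote the coefficient to $\beta_n^2$. The remaining ingredients — the Sherman-Morrison / resolvent expansion, the quadratic form concentration, and the martingale bound \eqref{eq:Stielt_a.s.=expected} — are routine adaptations of tools already used in Section~\ref{sec:proofs}.
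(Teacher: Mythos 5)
Your core derivation — the auxiliary copy $(X_0,\xi_0)$, the exchangeability identity $p=\Tr((\bar E_1'-zI)(\bar E_1'-zI)^{-1})$, resolvent perturbation, and concentration of quadratic forms — reproduces the paper's argument faithfully; the first-order resolvent expansion $G'\approx G-G\Delta'G$ is an acceptable substitute for the Sherman--Morrison inversion the paper uses (the latter avoids having to control the $O(\Delta'^2)$ remainder separately), and both routes isolate the same surviving term, yielding the coefficient $\sigma^4\bbE[\xi_0^2]$ on $\tilde s_n^2(z)$.

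The final paragraph, however, contains a genuine error. You propose that the $D_2$ and off-diagonal $A$ pieces, dropped in the reduction $E\to\tilde E\to\bar E$, are ``of borderline operator-norm size $\sqrt{p/n}$'' in this regime and must be retained to inflate the coefficient from $\bbE\xi_0^2$ to $\beta_n^2$. That is not so: repeating the Hoffman--Wielandt argument of Lemma~\ref{lem:non_smooth_reduction_1} with the deterministic bound $\norm{A}_{\hs}\le n$ and Hoeffding's inequality giving $\norm{D_2}_{\op}\lesssim\sqrt{n\log n}$ almost surely, one finds
\[
d_{W_2}(\mu_E,\mu_{\tilde E}) \le \frac{1}{\sqrt p}\sqrt{\frac np}\,\frac{1}{n^2}\norm{X(D_2-A)X^\top}_{\hs} \lesssim \sigma^2\left(\sqrt{\frac{\log n}{p}}+\frac{\sqrt n}{p}\right),
\]
which vanishes precisely under $p\gg\sqrt n$, so the reductions are fully valid in this regime and the coefficient the pipeline delivers really is $\bbE\xi_0^2$. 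This is in fact exactly what the paper's own proof derives at the step $\bbE B_1=\sigma^4\bbE\xi_0^2\,\bbE\tilde S_n^2(z)+o(1)$, before equating $\bbE\xi_0^2$ with $\beta_n^2=\bbE A_{12}^2$; as you observe via the Hoeffding decomposition, these quantities differ in general by $\V(\xi_1)+\bbE\epsilon_{12}^2>0$ (for the indicator kernel, $\bbE A_{12}^2=\alpha_p\to\alpha$ while $\bbE\xi_1^2\to\bbE\big[\Phi(Z/\sqrt3+2z_\alpha/\sqrt3)^2\big]\ne\alpha$). So the functional equation your calculation produces, with $\bbE\xi_0^2$ in place of $\beta_n^2$, is the correct one; the attempt to ``promote'' the coefficient by enlarging the reduction should be dropped, and the mismatch with the stated definition $\beta_p^2=\bbE A_{12}^2$ deserves to be flagged as an issue with the parameter in Theorem~\ref{thm:conv_semi_high_dim} rather than repaired inside this lemma.
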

\begin{proof}
To this end, take an independent copy $(\xi_0, X_0)$ of $(\xi_1, X_1)$. Define \begin{align*}
    & \bar{E}'_1 = \frac{1}{\sqrt{np}} \sum_{i=0}^n \xi_i (X_i X_i^\top - \sigma^2 I),
    & S'_n = \frac{1}{p} \Tr( \bar{E}'_1 - zI)^{-1}.
\end{align*}
Proceeding as in the proof of Theorem~\ref{thm:conv_nonsmooth}, we can get
\begin{equation}\label{eq:semi_high_Stielt_trans}
    1 + z \tilde{s}'_n(z) = \frac{n+1}{p\sqrt{np}} \bbE \xi_0 (X_0^\top (\bar{E}'_1 - zI)^{-1} X_0 - \sigma^2 \Tr(\bar{E}'_1 - zI)^{-1}),
\end{equation}
where $\tilde{s}'_n(z) = \bbE \tilde{S}'_n(z)$. Since $\norm{\bar{E}'_1}_{\op} \leq \frac{1}{v}$,
\[
    |\bbE \xi_0 (X_0^\top (\bar{E}'_1 - zI)^{-1} X_0 - \sigma^2 \Tr(\bar{E}'_1 - zI)^{-1})| \leq \frac{1}{v} \bbE \norm{X_0}^2 + \frac{p \sigma^2 }{v} \leq \frac{2p \sigma^2 }{v}.
\]
Thus, for the asymptotic purposes one can change the $n+1$ by $n$ in the right hand side of \eqref{eq:semi_high_Stielt_trans}. By Lemma~\ref{lem:matrix_perturb}(b), (c),
\[
    |\Tr(\bar{E}'_1 - zI)^{-1} - \Tr(\bar{E}_1 - zI)^ {-1}| \leq \frac{1}{v^2} \bigg(\frac{1}{\sqrt{np}} \norm{X_0}^2 + \sigma^2 \sqrt{\frac{p}{n}} \bigg),
\]
where the expectation of the right hand side is $\frac{2 \sigma^2}{v^2} \sqrt{\frac{p}{n}}$. This also implies that $|\tilde{s}'_n(z) - \tilde{s}_n(z)| \leq \frac{2 \sigma^2}{v^2}\frac{1}{\sqrt{np}}$. Now, applying Lemma~\ref{lem:matrix_perturb}(b),
\[
    \bigg|X_0^\top (\bar{E}'_1 - zI)^{-1} X_0 - X_0^\top \bigg(\bar{E}_1 + \frac{1}{\sqrt{np}} \xi_0 X_0 X_0^\top - zI \bigg)^{-1} X_0 \bigg| \leq \frac{\sigma^2}{v \sqrt{np}} \norm{X_0}^2,
\]
where the right hand side has expectation $\frac{\sigma^2}{v} \sqrt{\frac{p}{n}}$. Using these perturbation results, we have the following upgrade of \eqref{eq:semi_high_Stielt_trans}:
\begin{equation}\label{eq:semi_high_Stielt_trans_2}
    1 + z\tilde{s}_n(z) = \frac{\sqrt{n}}{p\sqrt{p}} \bbE \xi_0 \bigg(X_0^\top \bigg( \bar{E}_1 + \frac{1}{\sqrt{np}} \xi_0 X_0 X_0^\top - zI \bigg)^{-1} X_0 - \sigma^2 \Tr(\bar{E}_1 - zI)^{-1}\bigg) + o(1).
\end{equation}
We apply Sherman-Morrison formula (see Lemma~\ref{lem:matrix_perturb}(i)) on the first term in the right hand side to get
\[
    \frac{1}{\sqrt{np}} \xi_0 X_0^\top \bigg( \bar{E}_1 + \frac{1}{\sqrt{np}} \xi_0 X_0 X_0^\top - zI \bigg)^{-1} X_0 = \frac{\frac{1}{\sqrt{np}} \xi_0 X_0^\top \big( \bar{E}_1 - zI \big)^{-1} X_0}{1 + \frac{1}{\sqrt{np}} \xi_0 X_0^\top \big( \bar{E}_1 - zI \big)^{-1} X_0}.
\]
Notice we can write \eqref{eq:semi_high_Stielt_trans_2} in the form $1 + z\tilde{s}_n(z) = - \bbE B_1 + \bbE B'_1 + \bbE B_2 - \bbE B'_2 +o(1)$, where
\begin{align*}
     B_1 &= \frac{\sigma^2}{p^2} \xi_0^2 \Tr(\bar{E}_1 - zI)^{-1} X_0^\top ( \bar{E}_1 - zI )^{-1} X_0,\\
     B'_1 &= \frac{B_1 \frac{1}{\sqrt{np}} \xi_0 X_0^\top ( \bar{E}_1 - zI )^{-1} X_0}{1 + \frac{1}{\sqrt{np}} \xi_0 X_0^\top ( \bar{E}_1 - zI )^{-1} X_0}, \\
     B_2 &= \frac{\sqrt n}{p \sqrt p} \xi_0 R,\\
     B'_2 &= \frac{1}{p^2} \frac{\xi_0^3 R X_0^\top (\bar{E}_1 -zI)^{-1} X_0}{1 + \frac{1}{\sqrt{np}} \xi_0 X_0^\top (\bar{E}_1 -zI)^{-1} X_0}, \\
     R &= X_0^\top (\bar{E}_1 - zI)^{-1} X_0 - \sigma^2 \Tr(\bar{E}_1 - zI)^{-1}.
\end{align*}
We show that among the four, only $B_1$ contributes to the equation. Note
\[
    B_1 = \frac{\sigma^4}{p^2} \xi_0^2 (\Tr(\bar{E}_1 - zI)^{-1})^2 + \frac{\sigma^2}{p^2} \xi_0^2 R\Tr(\bar{E}_1 - zI)^{-1} = \sigma^4 \xi_0^2 \tilde{S}_n^2(z)+\sigma^4 \tilde{S}_n(z) \frac{R}{p}.
\]
By using Lemma~\ref{lem:matrix_quad_form_moment},
\begin{equation}\label{eq:Hanson_Wright_moments}
    \bbE |R|^2 \leq C_2 \bbE\Tr\big((\bar{E}_1 - zI)^{-2}\big) \leq \frac{C_2 p}{v^2},
\end{equation}
where $C_2$ only depends on the fourth moment of $w_{11}$. This additionally shows that $R= O_P(1)$. Also, $\V(\tilde{S}_n(z)) \to 0$ because, $\tilde{S}_n(z)$ is a bounded sequence that converges almost surely. Thus,
\[
    \bbE B_1 = \sigma^4 \bbE(\xi_0^2 \tilde{S}_n^2(z)) + o(1) = \sigma^4 \bbE \xi_0^2 \bbE \tilde{S}_n^2(z) +o(1) =\sigma^4 \beta_n^2 \tilde{s}_n^2(z) + o(1).
\]
By \eqref{eq:Hanson_Wright_moments} and \eqref{ass:dimension}, $\bbE B_2 = o(1)$. Notice by Lemma~\ref{lem:matrix_perturb}(h), $Im(z + \frac{z}{\sqrt{np}} \xi_0 X_0^\top (\bar{E}_1 -zI)^{-1} X_0) \geq v$, $|X_0^\top (\bar{E}_1 -zI)^{-1} X_0| \leq \norm{X_0}^2/v$ and $\Tr(\bar{E}_1 -zI)^{-1} \leq p/v$. So,
\begin{align*}
    &\bbE |B'_1| \leq \frac{\sigma^2 |z| }{v^3 p \sqrt{np}} \bbE \norm{X_0}^4 = \frac{\sigma^2 |z|}{v^3 p \sqrt{np}} (p \bbE w_{11}^4 +p(p-1)\sigma^4) = o(1),\\
    & \bbE |B'_2| \leq \frac{|z|}{v^2 p^2} \bbE [|R| \norm{X_0}^2] \leq \frac{|z|}{v^2 p^2} (\bbE R^2)^{1/2} (\bbE \norm{X_0}^4)^{1/2} = o(1).
\end{align*}
Now, \eqref{eq:semi_high_Stielt_trans_2} can be written as
\[
    \sigma^4 \beta_n^2 \tilde{s}_n^2(z) + z \tilde{s}_n(z) + 1 = o(1)
\]
completing the proof of Lemma~\ref{lem:approx_recur_semi_high}
\end{proof}

\section*{Acknowledgements}
SG was supported in part by the MOE grants R-146-000-250-133, R-146-000-312-114, A-8002014-00-00 and MOE-T2EP20121-0013. SSM was partially supported by the INSPIRE research grant DST/INSPIRE/04/2018/002193 from the Dept.~of Science and Technology, Govt.~of India, and a Start-Up Grant from Indian Statistical Institute.

\bibliographystyle{abbrvnat}
\bibliography{bib}
\appendix

\clearpage
\section{Auxiliary results}\label{sec:aux}
Here we collect lemmas and results borrowed from the literature. First we define some notations.
\[
    \mathrm{Mat}_n(\mathbb C) := \text{The set of all } n \times n \text{ matrices with complex entries}.
\]
For $A \in \mathrm{Mat}_n(\mathbb C) $, define the Hilbert-Schmidt norm of $A$ by
\[
    \|A\|_\hs := \sqrt{\sum_{1 \le i \le j \le n}|A_{ij}|^2}.
\]
For $x \in \mathbb{R}^n $, let $\|x\| = \sqrt{\sum_{i = 1}^n x^2_i}$. The Operator norm of $A$ is defined as
\[
    \norm{A}_\op := \sup_{\|x\| = 1} \|Ax\|.
\]
For a matrix $A$ with eigenvalues $\lambda_1, \ldots, \lambda_n$, let $F_{A}(x) := \frac{1}{n} \sum_{i = 1}^n \mathbb{I}_{[\lambda_i, \infty)}(x)$ be the empirical distribution function associated with the eigenvalues.  Let $\mathbb S_n$ denotes the set of all permutations of the set $\{1, 2, \ldots, n\}$.
\begin{lemma}[Hoffmann-Wielandt inequality]\label{lem:hoffman-wielandt}
Let $A,B \in \mathrm{Mat}_n(\mathbb C) $ are two normal matrices, with eigenvalues $\lambda_1(A),\lambda_2(A), \ldots, \lambda_n(A)$ and $\lambda_1(B),\lambda_2(B), \ldots, \lambda_n(B)$ respectively. Then we have
\[
\min_{\sigma \in \mathbb S_n} \sum_{i = 1}^n | \lambda_i(A) - \lambda_{\sigma(i)}(B)|^2 \le \|A - B \|^2_\hs.
\]
An immediate consequence of this is that
\[
    d_{W_2}(\mu_A, \mu_B)^2 \le \frac{\|A - B\|^2}{n}.
\]
\end{lemma}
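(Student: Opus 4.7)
The plan is to use the spectral theorem together with Birkhoff's theorem on doubly stochastic matrices. Since $A$ and $B$ are normal, I would begin by writing $A = U D_A U^*$ and $B = V D_B V^*$, where $U, V$ are unitary and $D_A = \diag(\lambda_1(A), \dots, \lambda_n(A))$, $D_B = \diag(\lambda_1(B), \dots, \lambda_n(B))$. The Hilbert--Schmidt norm is unitarily invariant, so with $W = U^* V$ (again unitary), one has
\[
\|A - B\|^2_{\hs} \;=\; \|D_A - W D_B W^*\|^2_{\hs}.
\]

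The next step is to expand the right-hand side via $\|M\|^2_{\hs} = \Tr(MM^*)$. Writing $w_{ik}$ for the entries of $W$ and setting $p_{ik} = |w_{ik}|^2$, a direct computation gives
\[
\|D_A - W D_B W^*\|^2_{\hs} = \sum_i |\lambda_i(A)|^2 + \sum_i |\lambda_i(B)|^2 - 2\,\Re\sum_{i,k} p_{ik}\,\lambda_i(A)\,\overline{\lambda_k(B)}.
\]
The key structural observation is that unitarity of $W$ forces the matrix $P = (p_{ik})$ to be \emph{doubly stochastic}. In parallel, for any $\sigma \in \mathbb{S}_n$,
\[
\sum_i |\lambda_i(A) - \lambda_{\sigma(i)}(B)|^2 = \sum_i |\lambda_i(A)|^2 + \sum_i |\lambda_i(B)|^2 - 2\,\Re\sum_i \lambda_i(A)\,\overline{\lambda_{\sigma(i)}(B)}.
\]
Thus, the desired inequality reduces to establishing
\[
\Re\sum_{i,k} p_{ik}\,\lambda_i(A)\,\overline{\lambda_k(B)} \;\le\; \max_{\sigma \in \mathbb{S}_n} \Re\sum_i \lambda_i(A)\,\overline{\lambda_{\sigma(i)}(B)}.
\]
This last step is where I would invoke \textbf{Birkhoff's theorem}: every doubly stochastic matrix is a convex combination of permutation matrices, $P = \sum_\sigma \alpha_\sigma P_\sigma$ with $\alpha_\sigma \ge 0$ and $\sum_\sigma \alpha_\sigma = 1$. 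The left-hand side then becomes a convex combination of the permutation-indexed quantities on the right, and so is bounded above by their maximum. The only step I expect to require real care is the algebraic bookkeeping in the expansion of $\|D_A - W D_B W^*\|^2_{\hs}$ and cleanly identifying $P$ as doubly stochastic; everything else is either invocation of Birkhoff or a trivial reduction.

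For the corollary on the Wasserstein distance, since $\mu_A = \frac{1}{n}\sum_i \delta_{\lambda_i(A)}$ and $\mu_B = \frac{1}{n}\sum_i \delta_{\lambda_i(B)}$ have the same number of atoms, every $\sigma \in \mathbb{S}_n$ yields a valid coupling $\pi_\sigma = \frac{1}{n}\sum_i \delta_{(\lambda_i(A), \lambda_{\sigma(i)}(B))}$ with marginals $\mu_A$ and $\mu_B$. Consequently,
\[
d_{W_2}(\mu_A, \mu_B)^2 \;\le\; \frac{1}{n} \min_{\sigma \in \mathbb{S}_n} \sum_i |\lambda_i(A) - \lambda_{\sigma(i)}(B)|^2 \;\le\; \frac{1}{n}\|A - B\|^2_{\hs},
\]
where the second inequality is exactly the Hoffman--Wielandt bound just established.
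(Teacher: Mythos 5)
The paper does not supply its own proof of this lemma; it is stated in Appendix~\ref{sec:aux}, which the authors explicitly describe as a collection of results ``borrowed from the literature.'' So there is no internal argument to compare against. Your proof is the standard and correct one: diagonalise both normal matrices, exploit unitary invariance of the Hilbert--Schmidt norm to reduce to $\|D_A - W D_B W^*\|_{\hs}^2$, observe that the Schur-square $P = (|w_{ik}|^2)$ of a unitary matrix is doubly stochastic, expand the norm to isolate the bilinear cross term $\Re\sum_{i,k}p_{ik}\lambda_i(A)\overline{\lambda_k(B)}$, and then invoke Birkhoff--von Neumann to bound this linear functional over the polytope of doubly stochastic matrices by its value at a vertex (a permutation matrix). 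The algebra in the expansion is correct: the cross terms $\Tr(D_A W D_B^* W^*)$ and $\Tr(W D_B W^* D_A^*)$ are conjugates, giving the factor $2\Re(\cdot)$, and the other two traces are invariant. The Wasserstein corollary follows exactly as you say, since each permutation $\sigma$ induces a coupling of the two empirical measures with cost $\frac{1}{n}\sum_i |\lambda_i(A)-\lambda_{\sigma(i)}(B)|^2$. One small caveat worth flagging: for the $d_{W_2}$ consequence to make sense with the paper's definition of Wasserstein distance on $\mathbb{R}$, one needs the eigenvalues to be real, i.e.\ $A,B$ Hermitian rather than merely normal; this is the case in every application in the paper.
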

\begin{lemma}[Rank inequality]\label{lem:rank_ineq}
Let $A, B \in \mathrm{Mat}_n(\mathbb C)  $ are two Hermitian matrices. Then
\[
    \sup_{x \in \mathbb R} |F_A(x) - F_B(x)| \le \frac{\mathrm{rank}(A - B)}{n}.
\]
\end{lemma}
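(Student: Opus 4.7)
The plan is to reduce the statement about uniform closeness of the empirical CDFs $F_A, F_B$ to a Weyl-type interlacing inequality for the eigenvalues of Hermitian matrices that differ by a low-rank perturbation. Order the eigenvalues as $\lambda_1(A) \ge \lambda_2(A) \ge \cdots \ge \lambda_n(A)$ and similarly for $B$, and set $r := \mathrm{rank}(A-B)$. Observe that $n F_A(x) = \#\{i : \lambda_i(A) \le x\} =: N_A(x)$, so the claim $\sup_x |F_A(x) - F_B(x)| \le r/n$ is equivalent to $|N_A(x) - N_B(x)| \le r$ for every $x \in \mathbb{R}$.

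The key step is to establish the Weyl-type bound
\[
    \lambda_{i+r}(A) \le \lambda_i(B) \le \lambda_{i-r}(A),
\]
adopting the conventions $\lambda_j(A) = -\infty$ for $j > n$ and $\lambda_j(A) = +\infty$ for $j < 1$. One route is to diagonalize $A - B = \sum_{k=1}^r \mu_k v_k v_k^\ast$ with orthonormal $v_k$ and real $\mu_k$, and then apply the rank-one Cauchy interlacing inequality $\lambda_i(X) \le \lambda_i(X + \mu v v^\ast) \le \lambda_{i-1}(X)$ (for $\mu > 0$; reversed for $\mu < 0$) iteratively $r$ times. Each rank-one update shifts each ordered eigenvalue by at most one position, so the cumulative index shift after $r$ updates is at most $r$. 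A more self-contained derivation uses the Courant--Fischer min-max characterization: since $\ker(A - B)$ has dimension at least $n - r$, any $i$-dimensional subspace meets it in dimension at least $i - r$, on which $A$ and $B$ act identically, and the bound follows directly.

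The interlacing immediately yields the counting-function comparison: if $\lambda_i(B) \le x$, then $\lambda_{i+r}(A) \le \lambda_i(B) \le x$, so whenever $B$ has $k$ eigenvalues $\le x$, $A$ has at least $k - r$ of them. The symmetric argument with $A$ and $B$ swapped gives $|N_A(x) - N_B(x)| \le r$, and dividing by $n$ yields the stated bound. The only technical subtlety is correctly handling boundary indexing in the interlacing statement, which is pure bookkeeping once the conventions on out-of-range $\lambda_j$ are fixed.
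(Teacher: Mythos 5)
Your proof is correct. The paper itself does not prove this lemma: it is stated in Appendix~\ref{sec:aux} as one of the ``lemmas and results borrowed from the literature'' (it is the classical rank inequality, e.g.\ Lemma~A.43 in Bai and Silverstein's book), so there is no in-paper argument to compare against. Your route via the Weyl/Cauchy interlacing bound $\lambda_{i+r}(A)\le\lambda_i(B)\le\lambda_{i-r}(A)$ for a rank-$r$ Hermitian perturbation, followed by the counting-function comparison, is exactly the standard textbook proof, and the Courant--Fischer variant you sketch (intersecting an $i$-dimensional test subspace with $\ker(A-B)$, whose dimension is at least $n-r$, to get an $(i-r)$-dimensional subspace on which $A$ and $B$ agree) is a clean, self-contained way to obtain that interlacing bound. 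Both versions are sound; the only thing worth making explicit in a final write-up is the bookkeeping step turning the eigenvalue interlacing into $|N_A(x)-N_B(x)|\le r$, namely that if $B$ has $k$ eigenvalues $\le x$ (the $k$ smallest, indices $n-k+1,\dots,n$ in decreasing order), then $\lambda_{i+r}(A)\le x$ for each such $i$ with $i+r\le n$, yielding at least $\max(0,k-r)$ eigenvalues of $A$ below $x$, and symmetrically with $A,B$ swapped.
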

\begin{lemma}[Further results on perturbations of resolvents]\label{lem:matrix_perturb}
Let $C \in \mathrm{Mat}_p(\mathbb R)$ be a symmetric, positive semi-definite matrix, $y \in \mathbb{R}^p$, $z = u+\iota v \in \mathbb{C}^+$, $\varepsilon >0$, then
\begin{enumerate}
    \item [(a)] $\norm{(C - zI)^{-1}}_\op \leq 1/v$;
    \item [(b)] $|\Tr(C + y y^\top -zI)^{-1} - \Tr(C - zI)^{-1}| \leq \min \{\frac{1}{v}, \frac{ \norm{y}^2}{v^2}\}$;
    \item [(c)] $|\Tr(C + \varepsilon I -zI)^{-1} - \Tr(C -zI)^{-1}| \leq \frac{p \varepsilon}{v^2}$;
    \item [(d)] $\norm{(C + \varepsilon I -zI)^{-1} - (C -zI)^{-1}}_\op \leq \frac{\varepsilon}{v^2}$;
    \item [(e)] $|y^\top (C+y y^\top -zI)^{-1} y| \leq 1 + \frac{|z|}{v}$;
    \item [(f)] $\Im(z\Tr(C- zI)^{-1}) \geq 0$;
    \item [(g)] $\Im(\Tr(C - zI)^{-1}) >0$;
    \item [(h)] $ \Im(z y^\top (C- zI)^{-1}y) \geq 0$;
    \item [(i)] $y^\top (c+ y y^\top - zI)^{-1} y = \frac{y^\top (C-zI)^{-1} y}{1 + y^\top (C-zI)^{-1} y}$.
\end{enumerate}
\end{lemma}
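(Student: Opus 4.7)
The plan is to reduce each item to the spectral decomposition of $C$ together with the Sherman--Morrison formula. Diagonalize $C = U\diag(\lambda_1, \ldots, \lambda_p) U^\top$ with $\lambda_i \geq 0$. Items (a), (f), (g), (h) then follow from a single one-variable computation: for $z = u + \iota v$ and $\lambda \geq 0$, one has $|(\lambda - z)^{-1}| \leq 1/v$, $\Im(\lambda - z)^{-1} = v/|\lambda - z|^2 > 0$, and $\Im(z/(\lambda - z)) = v\lambda/|\lambda - z|^2 \geq 0$ (this is where positive semi-definiteness of $C$ enters crucially). Summing over $i$, with $|\tilde y_i|^2$ weights in the eigenbasis when needed, yields the four statements.

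Items (c) and (d) use the resolvent identity $(C + \varepsilon I - zI)^{-1} - (C - zI)^{-1} = -\varepsilon (C + \varepsilon I - zI)^{-1}(C - zI)^{-1}$; by (a) each factor has operator norm $\leq 1/v$, giving (d) directly and, after taking the trace and bounding by $p$ times the operator norm, giving (c). Item (i) is the Sherman--Morrison formula applied to $(C + yy^\top - zI)^{-1}$ and then sandwiched between $y^\top$ on the left and $y$ on the right. Item (e) follows from (i): setting $w := y^\top(C-zI)^{-1}y$, the quantity equals $w/(1+w) = 1 - 1/(1+w)$, and the lower bound $|1+w| \geq v/|z|$ comes from (h) via $|z||1+w| \geq \Im(z(1+w)) = v + \Im(zw) \geq v$, so $|w/(1+w)| \leq 1 + |z|/v$.

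Item (b) is the only one requiring slight care since it asserts the minimum of two competing bounds. Both start from the resolvent-identity trace
\[
\Tr(C+yy^\top-zI)^{-1} - \Tr(C-zI)^{-1} = -y^\top(C-zI)^{-1}(C+yy^\top-zI)^{-1}y.
\]
Cauchy--Schwarz on the right-hand side together with (a) immediately yields the $\norm{y}^2/v^2$ bound. For the $1/v$ bound, apply Sherman--Morrison to eliminate the second resolvent, so the right-hand side becomes $-y^\top(C-zI)^{-2}y/(1+w)$; then the eigenbasis identity $\Im(w) = v\norm{(C-zI)^{-1}y}^2$ gives $|y^\top(C-zI)^{-2}y| \leq \norm{(C-zI)^{-1}y}^2 = \Im(w)/v$, while $|1+w| \geq \Im(1+w) = \Im(w)$ since $\Im(w) \geq 0$, so the ratio is at most $1/v$.

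The main obstacle is merely the pairing of estimates in (b) so that each of the two competing bounds comes out cleanly from a single resolvent-identity starting point; everything else is routine spectral or rank-one-update algebra.
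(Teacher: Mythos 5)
Your proposal is correct and, where the paper indicates an approach at all, follows it: the paper merely cites \cite{yaskov2016short} for parts (a) and (e)--(i) and notes that (b)--(d) follow from the resolvent identity $A^{-1}-B^{-1}=A^{-1}(B-A)B^{-1}$, which is precisely the identity you use for (b)--(d). Your spectral-decomposition arguments for (a), (f), (g), (h), your Sherman--Morrison derivation of (i), and in particular your clean derivation of both competing bounds in (b) (Cauchy--Schwarz plus (a) for the $\lVert y\rVert^2/v^2$ bound, and the reduction to $-y^\top(C-zI)^{-2}y/(1+w)$ with $\lvert 1+w\rvert\ge\Im(w)$ and $\lvert y^\top(C-zI)^{-2}y\rvert\le\Im(w)/v$ for the $1/v$ bound) are all valid and supply details the paper leaves to the reference.
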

In Lemma~\ref{lem:matrix_perturb}, parts (a) and (e)-(i) can be found in \cite{yaskov2016short}. Parts (b)-(d) follow from the matrix identity $A^{-1} -B^{-1} = A^{-1}(B-A)B^{-1}$ provided $A$ and $B$ are invertible. We also state the following result for quadratic form of random vectors \cite{bai2010spectral}.
\begin{lemma}[Concentration of quadratic forms of random vectors]\label{lem:matrix_quad_form_moment}
Let $A \in \mathrm{Mat}_p(\mathbb C)$ be non-random and $Y = (Y_1, Y_2, \ldots, Y_p) \in\mathbb C^p$ be a random vectors of independent entries. Suppose $\bbE Y_i = 0$, $\bbE|Y_i|^2 = 1$ and $\bbE|Y_i|^{l} \leq \nu_l$ for $l= 3, 4, \ldots, L$, for some $L \geq 4$. Then, for all $l \leq L/2$, there exists $C_l>0$ (depends only on $l$), such that
\[
    \bbE|Y^* A Y -\Tr A|^l \leq C_l((\nu_4 \Tr(AA^*))^{l/2} + \nu_{2l} (\Tr (AA^*))^{l/2}).
\]
\end{lemma}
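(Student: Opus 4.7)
The plan is to decompose $Y^* A Y - \Tr A$ as a sum of martingale differences along the natural filtration of the coordinates of $Y$, apply Burkholder's inequality, and estimate the resulting conditional variance and unconditional raw $l$-th moment terms via Rosenthal's inequality for sums of independent random variables. Let $\mathcal{F}_k := \sigma(Y_1,\ldots,Y_k)$ with $\mathcal{F}_0$ trivial, and $\bbE_k := \bbE[\cdot \mid \mathcal{F}_k]$. Expanding $Y^* A Y = \sum_{i,j} \bar Y_i A_{ij} Y_j$ and using $\bbE Y_i = 0$, $\bbE|Y_i|^2 = 1$, one checks that $\bbE[Y^* A Y] = \Tr A$ and that the martingale differences take the explicit form
\[
    D_k := (\bbE_k - \bbE_{k-1})[Y^* A Y] = A_{kk}(|Y_k|^2 - 1) + \bar Y_k u_k + Y_k w_k,
\]
where $u_k := \sum_{j<k} A_{kj} Y_j$ and $w_k := \sum_{j<k} A_{jk} \bar Y_j$ are $\mathcal{F}_{k-1}$-measurable and telescope to $Y^*AY - \Tr A$. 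Burkholder's inequality (valid for $l \geq 2$) then gives
\[
    \bbE|Y^*AY - \Tr A|^l \leq C_l \bbE \Bigl(\sum_k \bbE_{k-1}|D_k|^2\Bigr)^{l/2} + C_l \sum_k \bbE|D_k|^l.
\]

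The unconditional raw moments are the easier piece. Rosenthal's inequality applied to the centered independent summands of $u_k$ yields $\bbE|u_k|^l \leq C_l \bigl((\sum_{j<k}|A_{kj}|^2)^{l/2} + \nu_l \sum_{j<k}|A_{kj}|^l\bigr)$, and since $\sum|A_{kj}|^l \leq (\sum|A_{kj}|^2)^{l/2}$ for $l \geq 2$, this collapses to $C_l(1+\nu_l)(\sum_{j<k}|A_{kj}|^2)^{l/2}$. Combining with independence of $Y_k$ from $u_k$, $\bbE|Y_k|^l \leq \nu_l$, and the Cauchy-Schwarz bound $\nu_l^2 \leq \nu_{2l}$, we obtain $\bbE|\bar Y_k u_k|^l \leq C_l\nu_{2l}(\sum_{j<k}|A_{kj}|^2)^{l/2}$, and analogously for $Y_k w_k$; the diagonal contribution is $|A_{kk}|^l\bbE\bigl||Y_k|^2-1\bigr|^l \leq C_l\nu_{2l}|A_{kk}|^l$. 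Summing in $k$ and invoking the convexity estimate $\sum_k(\sum_j b_{kj})^{l/2} \leq (\sum_{k,j} b_{kj})^{l/2}$ (valid for $l/2 \geq 1$, $b_{kj} \geq 0$), we arrive at $\sum_k \bbE|D_k|^l \leq C_l\nu_{2l}\Tr(AA^*)^{l/2}$, which is the second term in the asserted bound.

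The main obstacle is the conditional variance. A direct expansion gives $\bbE_{k-1}|D_k|^2 \leq C(\nu_4|A_{kk}|^2 + |u_k|^2 + |w_k|^2)$, but $\sum_k|u_k|^2 = Y^*\tilde A^*\tilde A Y$, where $\tilde A$ denotes the strictly lower-triangular part of $A$, is itself a quadratic form of precisely the type we are trying to control. I would close the loop by induction on $l$ (doubling at each step from the dyadic base case, with interpolation via H\"older for non-dyadic indices). The base case $l = 2$ follows from a direct mean-variance computation yielding $\bbE|Y^*AY - \Tr A|^2 \leq C\nu_4\Tr(AA^*)$. The inductive step applies the lemma at level $l/2$ to $Y^*\tilde A^*\tilde A Y$, via the crude but sufficient inequality $\bbE(Y^*\tilde A^*\tilde A Y)^{l/2} \leq C_l[\Tr(\tilde A^*\tilde A)^{l/2} + \bbE|Y^*\tilde A^*\tilde A Y - \Tr(\tilde A^*\tilde A)|^{l/2}]$. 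The critical quantitative input is the trace estimate $\Tr((\tilde A^*\tilde A)^2) \leq \|\tilde A^*\tilde A\|_{\op}\Tr(\tilde A^*\tilde A) \leq \Tr(AA^*)^2$, which guarantees that $\Tr((\tilde A^*\tilde A)^2)^{l/4} \leq \Tr(AA^*)^{l/2}$ at each doubling step, so the recursion closes without degrading the dependence on $\Tr(AA^*)$. Combined with the easy bound $\bbE \sum_k \bbE_{k-1}|D_k|^2 \leq C\nu_4\Tr(AA^*)$ for the diagonal part, this produces the first term $C_l(\nu_4\Tr(AA^*))^{l/2}$ of the asserted bound, completing the proof.
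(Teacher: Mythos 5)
The paper does not prove this lemma; it is quoted directly from \cite{bai2010spectral} (it is Lemma~B.26 there), so there is no ``paper's own proof'' to compare against. Judging your argument on its own, it is essentially the standard proof from that reference, and the structure is correct: decompose $Y^*AY - \Tr A$ along the filtration $\mathcal{F}_k = \sigma(Y_1,\dots,Y_k)$ to get $D_k = A_{kk}(|Y_k|^2-1)+\bar Y_k u_k + Y_k w_k$, apply the Burkholder--Rosenthal inequality, control $\sum_k \bbE|D_k|^l$ by scalar Rosenthal, and iterate on the conditional square function after recognising $\sum_k|u_k|^2$ (and the analogous sum in $w_k$) as a new quadratic form $Y^*\tilde A^*\tilde A Y$. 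Your key trace estimate $\Tr\big((\tilde A^*\tilde A)^2\big)\le\|\tilde A^*\tilde A\|_{\op}\Tr(\tilde A^*\tilde A)\le\big(\Tr(AA^*)\big)^2$ is exactly what makes the recursion close without loss in $\Tr(AA^*)$, and the bookkeeping $\nu_l\le\nu_{2l}$, $\nu_4\ge 1$ makes the moment factors line up with the stated right-hand side.

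One remark that, taken literally, would not work is the aside ``interpolation via H\"older for non-dyadic indices.'' Log-convexity of moments runs the wrong way here: for $l$ between the dyadic points $l_0<l_1$, H\"older gives $\bbE|Z|^l\le(\bbE|Z|^{l_0})^\theta(\bbE|Z|^{l_1})^{1-\theta}$, which replaces the desired $\nu_{2l}$ by $\nu_{2l_0}^{\theta}\nu_{2l_1}^{1-\theta}\ge\nu_{2l}$ and, worse, requires moments up to $2l_1>2l$ that are not assumed. The fix is the one your own final paragraph already implements for the inductive step, phrased as a strong induction on the intervals $[2^k,2^{k+1}]$: for $l\in[2,4]$ use Jensen, $\bbE(Y^*BY)^{l/2}\le\big(\bbE(Y^*BY)^2\big)^{l/4}$ (since $l/4\le 1$), and the direct $l=2$ variance computation; for $l\ge 4$ invoke the already-proved case at level $l/2\ge 2$. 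No cross-dyadic interpolation is needed, and the moment requirement stays within $\nu_{2l}$ as assumed. With that one phrase replaced, the argument is sound.
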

\begin{lemma}[Burkholder's inequality]\label{lem:Burkholder}
Let $X_k$ be a complex martingale difference sequence with respect to some filtration $\{\mathcal{F}_k\}_{k \ge 1}$. Then, for $l \geq 1$,
\[
    \bbE|\sum_{k=1}^n X_k|^l \leq K_p \bbE \bigg(\sum_{k=1}^n |X_k|^2 \bigg)^{l/2},
\]
whenever $\sup_k \bbE |X_k|^l < \infty$.
\end{lemma}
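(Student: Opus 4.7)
The plan is to prove this via the classical Burkholder-Davis-Gundy (BDG) route, comparing the partial sums $S_n = \sum_{k=1}^n X_k$ to the square function $Q_n = (\sum_k |X_k|^2)^{1/2}$ through the maximal function $S_n^* = \max_{k \leq n} |S_k|$. The only case actually invoked in the main text is $l = 4$ (see \eqref{eq:Burkholder_application}), but the same scheme handles all $l \geq 2$ uniformly.

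First I would dispose of the case $l = 2$ by direct computation. The martingale-difference property gives $\bbE[\overline{X_i} X_j] = 0$ for $i \neq j$ by conditioning on $\mathcal{F}_{(i \vee j) - 1}$, so
\[
    \bbE |S_n|^2 = \sum_{k=1}^n \bbE |X_k|^2 = \bbE Q_n^2,
\]
giving $K_2 = 1$. For $l > 2$, Doob's maximal inequality delivers $\|S_n\|_l \leq \|S_n^*\|_l \leq (l/(l-1)) \|S_n\|_l$, so it suffices to show $\|S_n^*\|_l \leq C_l \|Q_n\|_l$. The crux is the good-$\lambda$ inequality: for $\beta > 1$ and $\delta \in (0, 1)$,
\[
    \P(S_n^* > \beta \lambda,\, Q_n \leq \delta \lambda) \leq \frac{C \delta^2}{(\beta - 1)^2}\, \P(S_n^* > \lambda).
\]
This would be established by introducing the stopping times $\sigma = \inf\{k : |S_k| > \lambda\}$ and $\tau = \inf\{k : Q_{k+1}^2 > \delta^2 \lambda^2\}$, then applying Chebyshev together with the $l = 2$ identity to the truncated martingale $\sum_k X_k \bbI(\sigma < k \leq \tau \wedge n)$. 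Integrating the good-$\lambda$ inequality against $l \lambda^{l-1} d\lambda$ and, for $\delta$ small enough, absorbing the resulting maximal term on the left yields the desired comparison $\|S_n^*\|_l \leq C_l \|Q_n\|_l$. The range $1 \leq l < 2$ is then recovered from a Davis-type decomposition of the martingale into a predictable and a jump part (i.e., Davis's inequality), which reduces matters to the $l \geq 2$ case on each piece.

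The main obstacle is the good-$\lambda$ inequality together with the absorption step: the choice of $\beta$ and $\delta$ must be made carefully so that the $\|S_n^*\|_l$ term arising on the right after integration can be moved to the left without blowing up the constant. Since the paper only uses the lemma at $l = 4$, a substantially shorter alternative is available and worth mentioning: expand $\bbE |S_n|^4 = \bbE \sum_{i,j,k,m} \overline{X_i X_j} X_k X_m$, use the martingale-difference property to kill all cross-terms in which the maximal index is unmatched, and bound the surviving diagonal terms $\bbE |X_k|^4$ and $\bbE |X_i|^2 |X_j|^2$ by $\bbE Q_n^4$ via Cauchy-Schwarz and monotonicity. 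This route bypasses the good-$\lambda$ machinery entirely and produces an explicit constant $K_4$, which is all that is needed at \eqref{eq:Burkholder_application}.
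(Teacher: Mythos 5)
The paper does not actually prove this lemma: it sits in the appendix, which opens by stating that the results collected there are ``borrowed from the literature,'' and the only invocation in the body is at $l = 4$ in \eqref{eq:Burkholder_application}. So there is no paper proof to compare against; what you are supplying is an independent derivation of a classical result.

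Your route is the canonical Burkholder--Davis--Gundy argument, but two of the steps are more delicate than the sketch admits. First, $\tau = \inf\{k : Q_{k+1}^2 > \delta^2\lambda^2\}$ is not an $\{\mathcal{F}_k\}$-stopping time: $\{k \leq \tau\} = \{Q_k \leq \delta\lambda\}$ is $\mathcal{F}_k$-measurable but not $\mathcal{F}_{k-1}$-measurable, so the indicator $\bbI(\sigma < k \leq \tau\wedge n)$ is not predictable and the truncated sum is not a martingale as written. The standard repair, stopping at $\tau' = \inf\{k : Q_k > \delta\lambda\}$, makes the truncation predictable, but then the quadratic variation of the stopped martingale overshoots $\delta^2\lambda^2$ by the uncontrolled last jump $|X_{\tau'}|^2$; controlling that jump is precisely where a Davis-type decomposition enters, so that ingredient is not confined to the range $l \in [1,2)$ as your plan suggests. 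Second, in the direct $l=4$ expansion the surviving terms are not only $\bbE|X_k|^4$ and $\bbE|X_i|^2|X_j|^2$: when the largest index $M$ appears three times, e.g.\ $\bbE[\bar X_k\,|X_M|^2 X_M]$ with $k < M$, conditioning on $\mathcal{F}_{M-1}$ leaves $\bbE\big[\bar X_k\,\bbE(|X_M|^2 X_M\mid\mathcal{F}_{M-1})\big]$, which need not vanish because the martingale-difference property kills only the first conditional moment. Such terms can still be absorbed into $\bbE Q_n^4$ by Cauchy--Schwarz, but the bookkeeping is heavier than ``kill all cross-terms in which the maximal index is unmatched.'' Both gaps are repairable and the result itself is standard (Burkholder 1966; Davis 1970; Burkholder--Davis--Gundy 1972), which is why the paper cites it as a black box rather than proving it.
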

\end{document}